\definecolor{OrangeRed}{cmyk}{0,0.6,1,0}            
\definecolor{DarkBlue}{cmyk}{1,1,0,0.20}
\definecolor{myblue}{rgb}{0.66,0.78,1.00}
\definecolor{Violet}{cmyk}{0.79,0.88,0,0}
\definecolor{Lavender}{cmyk}{0,0.48,0,0}
\newtheorem{Conjecture}{Conjecture}
\newtheorem*{theorem}{Theorem}
\newtheorem{thm}{Theorem}[section]
\newtheorem{lem}[thm]{Lemma}
\newtheorem{prop}[thm]{Proposition}
\theoremstyle{definition}
\newtheorem{defn}[thm]{Definition}
\newtheorem{rem}[thm]{Remark}
\theoremstyle{remark}
\numberwithin{equation}{section}
\font\nt=cmr7
\def\note#1
\def\be{\begin{equation}}
\def\ee{\end{equation}}
\renewcommand{\epsilon}{\varepsilon}
\newcommand{\ra}{\rightarrow}
\newcommand{\diam}{\operatorname{diam}}
\renewcommand{\mod}{\operatorname{mod}}
\newcommand{\BB}{{\cal B}}
\newcommand{\CC}{{\cal C}}
\newcommand{\FF}{{\cal F}}
\newcommand{\GG}{{\cal G}}
\newcommand{\MM}{{\cal M}}
\newcommand{\PP}{{\cal P}}
\newcommand{\RR}{{\cal R}}
\renewcommand{\SS}{{\cal S}}
\newcommand{\ZZ}{{\cal Z}}
\newcommand{\C}{{\mathbb C}}
\newcommand{\D}{{\mathbb D}}
\newcommand{\N}{{\mathbb N}}
\newcommand{\Z}{{\mathbb Z}}
\def\B0{{\mathbf{0}}}
\newcommand{\ov}{\overline}
\renewcommand{\ra}{\rightarrow}
\newcommand{\Chat}{\hat{\C}}
\def\Empty{}
\newcommand\oplabel[1]{
  \def\OpArg{#1} \ifx \OpArg\Empty {} \else
  	\label{#1}
  \fi}
\renewcommand{\hat}{\widehat}
\newcommand{\s}{{\bf{s}}}
\newcommand{\Rat}{\operatorname{Rat}}
\renewcommand{\hat}{\widehat}
\renewcommand{\hat}{\widehat}
 \title{A survey on MLC, Rigidity and related topics}
\author{\small Anna Miriam Benini\thanks{Supported by  the Marie Curie IEF grant H2020 703269 COTRADY and by the SIR grant NEWHOLITE no. RBSI14CFME.} \\
 }
\begin{document}

\maketitle  
\begin{abstract} The MLC Conjecture states that the Mandelbrot set is locally connected, and it is considered by many to be the central conjecture in one-dimensional complex dynamics. Among others, it implies density of hyperbolicity in the quadratic family $\{z^2+c\}_{c\in\C}$. We describe recent advances on MLC and the relations between MLC, the Density of Hyperbolicity Conjecture, the Rigidity Conjecture, the No Invariant Line Fields Conjecture, and the Triviality of Fibers Conjecture. We treat  families of unicritical   polynomials and rational maps as well as the exponential family and  families of transcendental maps with finitely many singular values.\footnote{{\sc{2010 MSC.}} 37F25, 37F10, 37F20, 37F45. 

 \noindent {\sc{Keywords.}} MLC conjecture, rigidity,  transcendental dynamics, fibers,  combinatorics, renormalization, Yoccoz puzzle.} 
\end{abstract}
\ 
\section{Introduction} 
   
   The family of quadratic polynomials $\{z^2+c\}_{c\in\C}$, despite its simple (apparently innocent) form, exhibits a rich variety of dynamical behaviour. A special role is played by the unique \emph{critical value} $c$, which is the only point in $\C$ near which not all branches of the inverse are well defined and univalent.   The Mandelbrot set is a compact connected full subset of the plane, which can be characterized as the set for which the orbit of the critical value $c$ does not tend to infinity under iteration of the complex polynomial $z^2+c$. The apparently naive question of whether the Mandelbrot set is also locally connected- the so-called MLC Conjecture \cite{DH84}- has attracted work from excellent mathematicians in the past 30 years (see Section~\ref{The Mandelbrot Local Connectivity Conjecure} for the definition of local connectivity).     
 Good references on the subject of holomorphic dynamics at the graduate     level are \cite{Bea}, \cite{CG} and \cite{Mi}. At an undergraduate level you may look at \cite{Dev} and \cite{Roe}. More advanced texts, with an emphasis on renormalization, are \cite{Lyu99} and \cite{McM}. For transcendental maps, one can look at \cite{HY98}.

Two major breakthroughs occurred almost simultaneously in the Nineties: on one side Yoccoz showed that the Mandelbrot set is locally connected at all non-infinitely renormalizable parameters \cite{Hu}, and on the other side Lyubich and Graczyk and \'Swi\c{a}tek independently showed that hyperbolicity is dense in the family of real quadratic polynomials \cite{Lyu97}, \cite{GS97} (see also \cite{LvS}).

Most  approaches to the MLC conjecture, for example those involving the different types of renormalization,  are extremely technical, so we do not even try to describe the details of the proofs. 
The main goal of this survey is to offer an overview of the relations between the different conjectures related to the MLC conjecture, to picture the state of the art, and to give the  references for the details and for      most of the proofs. Some of this material naturally  overlaps with previous writings, especially with \cite{McM} and \cite{Hu}. However, we include: recent rigidity results by Levin for classes of maps with no a priori bounds (\cite{Lev11}, \cite{Lev14}); a section on  parabolic and near parabolic renormalization for quadratic polynomials (\cite{IS},\cite{CS});  results by  Avila, Kahn, Lyubich and Shen and by Cheraghi about rigidity and local connectivity for unicritical polynomials (\cite{AKLS}, \cite{KL09b}, \cite{Che10}); and some recent results on topological models \cite{BOPT}. We also present a description of the combinatorial approach to the MLC conjecture which is now widely used (\cite{Sc04},\cite{RS08}) and  a unified view of the relations between the main conjectures related to MLC.

We also   include a glimpse on families of transcendental maps with finitely many singular values, for which density of hyperbolicity is conjectured in appropriate finite-dimensional parameter spaces. In particular, the combinatorial approach to MLC conjecture and density of hyperbolicity (see Section~\ref{Fiber Section}) may have a chance to provide some more insight on the parameter spaces of transcendental maps with finitely many singular values.  The exponential family $\{e^z+c\}_{c\in\C}$ turned out to have striking dynamical and combinatorial similarities with the families  of unicritical polynomials. Recent advances on the description  of the escaping set for large classes of transcendental entire functions (see Section~\ref{Dynamic and parameter rays}) starts to shed some light on combinatorics of such maps  \cite{RRRS}, and offers new tools which may make it possible in the future to adapt some of the techniques used for polynomials and exponentials towards new methods to tackle the problem of density of hyperbolicity in the space of transcendental functions with finitely many singular values. 
 The reader is assumed to have some background in  complex dynamics. We keep the focus on quadratic polynomials but also give some information and references  about unicritical polynomial and families of transcendental and rational maps. 
 
It is very plausible  that the study of non-unidimensional parameter spaces of natural families of functions will be one of the main subjects of investigation in the next couple of decades, and that it is really useful to keep in mind a unified approach to   spaces of rational functions and finite dimensional spaces of entire transcendental functions. Many tools developed to study the MLC Conjecture also give insights on the fine structure of the parameter spaces of the functions under consideration. 

The targets of this survey are researcher interested in starting to work  on rigidity and related topics, and researchers  working in other areas of holomorphic dynamics who would like to get an overview of this very important topic, which so much shaped the research in one-dimensional holomorphic dynamics in the past 30 years. 

\subsection*{Structure of the paper}In Section~\ref{Preliminaries} we introduce singular values, the postsingular set, the families of functions that we will study in this paper and dynamic and parameter rays.  We also briefly write about wakes and about landing of dynamic and parameter rays. 
In Section~\ref{Density of Hyperbolicity} we define hyperbolic maps, structural stability, $J$-stability and the bifurcation locus. We state the appropriate theorems about density of structural stability in the parameter spaces that we are considering,  and state the Density of Hyperbolicity Conjecture.
 In Section~\ref{The Mandelbrot Local Connectivity Conjecure} we define local connectivity and describe results about local connectivity of the Mandelbrot set and of Julia sets, we mention the recent results by Levin and references for topological models of the Mandelbrot sets and of Julia sets. In Section~\ref{Topological rigidity} we define topological rigidity, quasiconformal maps, and quasiconformal rigidity. We also sketch Thurston's pullback argument. Section~\ref{The No Invariant Line Fields Conjecture (NILF)} is dedicated to defining invariant line fields, stating the No Invariant Line Fields conjecture  and sketching the proof that it is equivalent to the Density of Hyperbolicity Conjecture following \cite{McM}. In Section~\ref{Renormalization} we give the basic ideas behind the polynomial-like renormalization,  combinatorics, parabolic renormalization  and Yoccoz puzzle. In this section especially we strive for giving the  general concepts rather than the precise definitions, which are often slightly different from paper to paper. Finally in Section~\ref{Fiber Section} we describe  the Combinatorial Rigidity Conjecture, we define fibers and state the Triviality of Fibers Conjecture, describe an open-closed argument which also applies to the exponential family, and conclude with a  table of the different conjectures orbiting around the MLC conjecture.
   
\subsection*{Acknowledgements}   The author would like to thank  Misha Lyubich, Lasse Rempe-Gillen, Fabrizio Bianchi  for useful discussions on the topics of this paper, and Trevor Clark and Wolf Jung for several useful comments on the manuscript.

\section{Preliminaries} \label{Preliminaries}

\subsection*{Singular Values, Postsingular set and Escaping set}
Let $f$ be a holomorphic function.  The set of \emph{ singular values} $S(f)$ is the set of values near  which not all inverse  branches of $f$ are well defined and univalent. For rational maps, singular values coincide with  critical values. For transcendental maps, the set of singular values is the closure of the set  of critical and asymptotic values of $f$. Recall that an asymptotic value for a transcendental function $f$ is a point $a\in\C$ such that there exists a curve $\gamma:[0,\infty)\ra \C$ such that $|\gamma(t)|\ra\infty$ as $t\ra\infty$ while $f(\gamma(t))\ra a$ as $t\ra\infty$ (for example, $0$ is an asymptotic value for $e^z$). 

Entire transcendental  functions whose set of singular values is bounded are said to belong to the \emph{Eremenko-Lyubich class $\BB$}. 
In this survey we will mostly restrict ourselves to  the even smaller class of transcendental entire  functions with a finite set of  singular values, called the \emph{Speiser class} $\SS$ (or, functions of finite type).  

The \emph{postsingular set} $P(f)$ of an entire function $f$ is defined as the closure of the orbits of all singular values, that is, 

\[P(f):=\ov{\bigcup_{s\in S(f),\ n\in\N} f^n(s)};\]
The $\omega$-limit set of a singular value is the set of accumulation points of its orbit.

For polynomials, all singular values are critical values, so it is customary to call $P(f)$ the \emph{postcritical set}. 

The \emph{escaping set} is the set $I(f)$ of points whose orbits tend to infinity. For polynomials, it coincides with the attracting basing of infinity, while for transcendental maps in class $\BB$ the set $I(f)$ is neither open nor closed and is contained in the Julia set. One can also show that  the Julia set is the closure of $I(f)$ (\cite{Er86}). It is important to keep in mind that for polynomials the orbit of each critical point is either bounded or escaping, while for transcendental maps singular orbits can be unbounded (for example, even dense in the plane) without escaping to infinity.

Essentially, in one-dimensional dynamics the non-linear behaviour of the function is due to the presence of singular value  and to the branching induced by the complexity of their orbits. 
A singular value is called \emph{recurrent} if it is contained in its own $\omega$-limit set, and \emph{non-recurrent} otherwise. When all singular values are non-recurrent, the function is usually simpler to study from the dynamical point of view, for example, it is easier to show local connectivity of Julia sets in the polynomial case or rigidity in the polynomial and exponential case.

\subsection*{Families of holomorphic maps}\label{Families}

In this survey we will be concerned with the family of unicritical poynomials, the family of rational maps of degree $d$, and  families of transcendental entire maps as defined in \cite{EL} whose parameter space can be represented by a finite-dimensional complex analytic manifold.
 
The most studied families of  polynomials are the one-dimensional  families of unicritical polynomials, which up to conjugation by affine maps can be written as $\{z^d+c\}_{c\in\C}$.  Their natural parameter space is the complex plane $\C$. The set of parameters in the family $\{z^d+c\}$ for which the Julia set is connected is called the \emph{Multibrot set} of degree $d$ and is denoted by $M_d$. When $d=2$, $M_2=M$ is the Mandelbrot set.

The exponential family $\{e^z+c\}_{c\in\C}$ is the simplest family of transcendental maps, and includes all entire transcendental maps with a unique singular value  up to conformal conjugacy. The complex plane  is usually considered as the  parameter space for this family although one should keep in mind that for any $c$, the  exponential  map  $e^z+c$ is affinely conjugate to all maps of the form $e^z+c+2k\pi i$ for $k\in\Z$ via the translation $z\ra z+2k\pi i$.
 
The space of all rational maps can be endowed with the structure of a  complex manifold $\MM_d$ of dimension $2d+1$. One often considers  the orbifold  $\Rat_d:=\MM_d/\sim$, where two rational maps are equivalent if and only if they are  conjugate by a Moebius transformation.  $\Rat_d$ is an orbifold of complex dimension $2d-2$. 
For more on the space of rational maps of degree $d$ and their moduli space see \cite{Mi93}.

Natural families of transcendental maps with finitely many singular values can be defined, whose parameter spaces are a finite-dimensional complex analytic manifolds. Recall that $S$ is the class of entire functions with finitely many singular values.
By \cite{EL}, every function $g\in S$ with $q$ singular values is included in a (unique) finite dimensional complex analytic manifold $M_g$ of complex dimension $q+2$, and two functions which are topologically conjugate are included in the same manifold. To prove this one proceeds as follows. Given two functions $f,g$ with finitely many singular values one says that    $f\in M_g$ if and only if there exist homeomorphisms $\phi, \psi:\C\ra\C  $ such that $\psi\circ f=g\circ\phi$. This condition is called  \emph{topological equivalence}. It is then possible to show the following:  let $f_1=\psi_1\circ g \circ \phi_1\in M_g$ and  $f_2=\psi_2\circ g \circ \phi_2\in M_g$ such that there is an isotopy connecting   $\psi_1,\psi_2$ which fixes the  set of singular values for $f$ (and two additional values); then $f_1=f_2$.
 In particular, for any $f\in M_g$ the homeomorphism $\psi,\phi$ such that $\psi\circ f= g\circ\phi$ can be chosen to be quasiconformal 
 (in this case one says that $f,g$ are  \emph{quasiconformally equivalent}).  
This fact can be used to define an analytic structure on $M_g$ and show that it has dimension $q+2$; $M_g$ is the natural parameter space for the family of the map $g$.   {In this case one does not   consider equivalence classes under conjugation by affine maps like one does for unicritical polynomials or exponential maps, which explains the two extra parameters in dimension of the space.}
  This finite dimensional complex analytic structure can be used to show that entire functions with finitely many singular values do not have wandering  domains (\cite{EL},\cite{GK}) and that the number of non-repelling cycles is bounded by the number of singular values  (\cite{EL}). 


A surprising theorem which gives an idea of how much two functions belonging to the same natural family  share the same  dynamics near infinity is the following rigidity theorem, which can be also seen as an analogue of Boettcher Theorem for natural families of entire functions \cite{Re09}. 
\begin{thm}[Conjugacy near infinity] Let $f, g\in\BB$ be quasiconformally equivalent.
 Then there exist $R>0$ and a quasiconformal map $\theta:\C\ra\C$ such that \[
\theta\circ f =g \circ\theta \text{\ \  on\ \  }J_R(f) :=\{z \in\C: |f^n(z)|>R \text{ for all } n>1\}.\]
Furthermore, $\theta$ has zero dilatation on $\{z\in J_R(f):|f^n(z)|\ra\infty\}$. 
\end{thm}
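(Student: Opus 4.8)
The plan is to pass to the Eremenko--Lyubich logarithmic coordinates, build the conjugacy there as the attracting fixed point of a graph transform, and then descend through $\exp$. First I would fix $R>0$ so large that $\overline{D_R}\supset S(f)\cup S(g)\cup\{0\}$ and that the logarithmic change of variable applies cleanly: writing $W:=\C\setminus\overline{D_R}$ and $\Hyp:=\{z:\Re z>\log R\}$, the components of $f^{-1}(W)$ lift under $\exp$ to a $2\pi i$-periodic family $\TT_F$ of pairwise disjoint unbounded Jordan domains (the \emph{tracts}), contained in $\Hyp$, on each of which the logarithmic transform $F$ (characterized by $\exp\circ F=f\circ\exp$) is a conformal isomorphism onto $\Hyp$; one gets $G$ and $\TT_G$ from $g$ in the same way. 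Under $\exp$ the set $J_R(f)$ becomes the uniformly escaping set $\widetilde J:=\{z:\Re F^n(z)>\log R\text{ for all }n\ge1\}$ and $\{z\in J_R(f):|f^n(z)|\to\infty\}$ becomes its escaping part $\widetilde E:=\{z\in\widetilde J:\Re F^n(z)\to\infty\}$. I would also record the standard expansion estimate $|F'(z)|\ge\tfrac{1}{4\pi}(\Re F(z)-\log R)$ on $\TT_F$, which after a further enlargement of $R$ yields uniform expansion $|F'|\ge\lambda>1$ along every orbit that stays in $\Hyp$, in particular along every orbit in $\widetilde J$.

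The next step is to lift the quasiconformal equivalence. From $\psi\circ f=g\circ\phi$ with quasiconformal $\phi,\psi\colon\C\to\C$ (which we may take to fix $\infty$, after adjusting $R$) I would produce $2\pi i$-periodic quasiconformal maps $\Phi,\Psi\colon\overline\Hyp\to\overline\Hyp$ with $\exp\circ\Phi=\phi\circ\exp$, $\exp\circ\Psi=\psi\circ\exp$ and $\Psi\circ F=G\circ\Phi$ on $\TT_F$. The crucial output is a bijection $\beta\colon\TT_F\to\TT_G$ between tracts, commuting with $z\mapsto z+2\pi i$; this combinatorial datum is available precisely \emph{because} $f$ and $g$ are quasiconformally equivalent (not merely because they are conjugate), and it prescribes, for a point $z$ with tract itinerary $(T_0,T_1,\dots)$ under $F$, which inverse branch of $G$ to apply at each step, namely $(G|_{\beta(T_0)})^{-1},(G|_{\beta(T_1)})^{-1},\dots$

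With this in hand I would run the graph transform. Choose a $2\pi i$-periodic quasiconformal $\theta_0\colon\overline\Hyp\to\overline\Hyp$ realizing $\beta$ at the level of tracts and conformal outside a strip $\{\log R\le\Re z\le\log R+C\}$ (such a $\theta_0$ exists by a quasiconformal interpolation in the strip, using the freedom in $\Psi$), and for $z\in\widetilde J$ with itinerary $(T_n)$ set
\[
\theta_n(z):=\big((G|_{\beta(T_0)})^{-1}\circ\cdots\circ(G|_{\beta(T_{n-1})})^{-1}\big)\big(\theta_0(F^n(z))\big),
\]
which is well defined since every $\overline{\beta(T_k)}\subset\overline\Hyp$. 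Because $F$ is uniformly expanding and the inverse branches of $G$ are correspondingly contracting, the $\theta_n$ converge uniformly to a map $\theta$ on $\widetilde J$ with $G\circ\theta=\theta\circ F$ there; and since each $\theta_n$ differs from $\theta_0$ only by pre- and post-composition with holomorphic maps, $\|\mu_{\theta_n}\|_\infty\le\|\mu_{\theta_0}\|_\infty<1$, so $\theta$ is quasiconformal with the same dilatation bound. For $z\in\widetilde E$ we have $\Re F^n(z)\to\infty$, so for all large $n$ the map $\theta_n$ agrees near $z$ with a composition of the holomorphic map $F^n$, the conformal map $\theta_0$ on $\{\Re z>\log R+C\}\cap\TT_F$, and a holomorphic inverse branch of $G^n$; hence $\theta$ is holomorphic near such $z$, i.e.\ it has zero dilatation on $\widetilde E$. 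It then remains to extend $\theta$ across the complementary region of bounded real part to a genuine quasiconformal homeomorphism of a neighbourhood of $\widetilde J$ — using the $\theta_n$ on finite-depth cylinders and the controlled behaviour of $\Phi,\Psi$ there — and finally to descend through $\exp$ and extend quasiconformally across $\overline{D_R}$, obtaining the required $\theta\colon\C\to\C$.

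I expect the main obstacle to be twofold, and both halves look soft but carry the real content. The first is producing the lifts $\Phi,\Psi$ and extracting from them a correct, dynamically respected tract bijection $\beta$: tracts can accumulate on one another near $\infty$, so establishing that this combinatorial correspondence is well defined and equivariant is delicate — and this is exactly the step where quasiconformal \emph{equivalence}, rather than mere conjugacy, is genuinely used. The second is upgrading the dynamically-defined $\theta$ on $\widetilde J$ to an honest quasiconformal homeomorphism of the plane with a uniform dilatation bound, and propagating conformality along escaping orbits without tripping over the fact that $\widetilde J$ is neither open nor closed; in effect, one must control how the tract-wise conformal pieces glue to the quasiconformal connective tissue supported in the complement of the tracts.
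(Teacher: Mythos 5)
The survey does not actually prove this theorem; it is quoted verbatim from Rempe \cite{Re09}, so I am comparing your outline against that paper's argument, whose architecture (logarithmic change of variable, the $|F'|\ge\frac{1}{4\pi}(\Re F-\log R)$ expansion, the tract bijection extracted from the lifted quasiconformal equivalence, and a pullback limit of uniformly $K$-quasiconformal maps) you have reconstructed correctly in broad strokes. The convergence and dilatation-bound part of your sketch is fine, and so is the identification of where the real work lies.

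There is, however, a genuine gap in the step that is supposed to deliver the \emph{furthermore} clause. You ask for a $2\pi i$-periodic quasiconformal $\theta_0:\overline\Hyp\to\overline\Hyp$ that is conformal on the half-plane $\{\Re z>\log R+C\}$ \emph{and} realizes the tract bijection $\beta$. These two demands are incompatible in general: a $2\pi i$-periodic conformal injection of the half-plane $\{\Re z>\log R+C\}$ into $\Hyp$ descends under $w=e^z$ to a conformal embedding of a punctured neighbourhood of $\infty$ into $\{|w|>R\}$ fixing $\infty$, hence has the form $w\mapsto aw+b+O(1/w)$ near $\infty$. Such an essentially affine map cannot, for arbitrary $f,g\in\BB$, carry the far end of every $f$-tract into the corresponding $g$-tract: the tracts may have completely different asymptotic shapes, and matching them is infinitely many constraints on a two-parameter family. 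The ``interpolation in the strip'' does not repair this, because it leaves the conformal part untouched. So ``such a $\theta_0$ exists'' is unjustified as stated, and without it your derivation of $\mu_\theta=0$ on $\widetilde E$ collapses.

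The fix is to weaken the conformality requirement on $\theta_0$ to the tracts themselves: define $\theta_0$ on $T\cap\{\Re z>\log R+C\}$ by the conformal isomorphism $T\to\beta(T)$ coming from the uniformizations $F|_T$ and $G|_{\beta(T)}$ (suitably normalized), use the lifted equivalence $\Psi$ elsewhere, and interpolate quasiconformally in the fundamental annulus of each tract; one must still check that the interpolation has a dilatation bound uniform over all tracts, which follows from the uniform geometry of tracts under the logarithmic uniformization. Since any $z\in\widetilde J$ has $F^n(z)$ in a tract for every $n$, and $\Re F^n(z)\to\infty$ for $z\in\widetilde E$, the pulled-back Beltrami coefficients $\mu_{\theta_n}(z)$ still vanish for large $n$ at a.e.\ such $z$, and the weak-$*$ compactness argument you indicate then gives $\mu_\theta=0$ a.e.\ on $\widetilde E$. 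With this replacement your plan follows the approach of \cite{Re09}; the remaining issues you flag (existence and equivariance of $\beta$, and the global extension of $\theta$ off $\widetilde J$) are correctly identified as the substantive technical work.
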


 The families of unicritical polynomials and the exponential family, being one-dimensional families,  have special features in that a combinatorial description is available not only for the dynamical plane  but also for the parameter plane (see Section~\ref{Fiber Section}). It is not clear to which extent it will be possible to extend this type of description to higher dimensional parameter spaces.

\subsection*{Dynamic and parameter rays}\label{Dynamic and parameter rays}
Dynamic rays (originally called \emph{external rays} in the polynomial setting) are injective curves in the escaping set which are naturally equipped with symbolic dynamics(  \cite{DH84}, \cite{DK}, \cite{DT86}, \cite{RRRS}, \cite{Ba}, \cite{DGH}). 
When they do not land alone, periodic dynamic rays  partition the dynamical plane and are especially meaningful for polynomials  and transcendental maps. 
Parameter rays are curves of escaping parameters which are defined in the parameter spaces of  unicritical polynomials and of the exponential family.  Dynamic and parameter rays are the base tools in all techniques used to prove rigidity which involve some version of Yoccoz puzzle (see Section~\ref{Renormalization}), that is, most of the work on rigidity-related topics of the last few decades.

Dynamic rays are proven to exist for polynomials \cite{DH84} and for entire functions in class $\BB$ which satisfy some additional growth condition (namely, they are a finite composition of functions of finite order in class $\BB$ \cite{RRRS}.) For the purpose of this paper, one can use the following definition of dynamic rays.

\begin{defn}[Dynamic rays] Let $f$ be a polynomial, or an entire transcendental function satisfying the growth conditions from \cite{RRRS}.
Then there exists a subspace $\SS\subset \Z^{\N}$, which is  invariant under the left-sided shift map $\sigma$, and  a maximal family $\GG$ for  $f$ of maximal injective curves $G_\s: (0,\infty)\ra I(f)\subset\C$ such that for each $\s\in\SS$ there exists a unique  curve $G_\s$ satisfying the following properties:
\begin{itemize}
\item for any $t>0,\  f^n_{[t,\infty)} \ra\infty$   uniformly  as $n\ra \infty$;
\item  $f(G_\s)=G_{\sigma\s}$;
\item $|G_\s(t)|\ra\infty $ as $t\ra\infty$.
\end{itemize}
We also assume that  $G_\s(t)\cap S(f)=\emptyset$. The family $\GG$ is called the family of \emph{dynamic rays} for $f$.
\end{defn}

One can prove  that for any escaping point $z$ there exists $n_0$ such that for all $n\geq n_0$ the point $f^n(z)$ belongs to a dynamic ray.
 
The family of dynamic rays is maximal in the sense that it is not possible to add more curves (in other words, $\SS$ is maximal), and the curves are maximal in that they cannot be extended further. The request of not containing singular values is not necessary but it makes things simpler in this context. 
 One can construct examples of entire functions in class $\BB$ which do not satisfy  the required growth condition and for which there are no curves in the escaping set \cite{RRRS}.

 \begin{defn} A dynamic ray is \emph{periodic} if it is labeled by a periodic sequence. A dynamic ray $G_\s$ \emph{lands} if and only if $G_\s(t)\ra z_0\in\C$ as $t\ra0$. A point $z_0$  is \emph{accessible} if there exists a ray landing at $z_0$.
 \end{defn} 
 
{To fix notation, we quickly review the construction of dynamic rays for polynomials with connected Julia set. 
For a polynomial $f$  of degree $d$ let   $K_f$ denote  its filled Julia set and assume it is connected. Equivalently, the postsingular set is bounded, or equivalently, if $f$ is unicritical  the parameter identifying $f$ belongs to the corresponding Multibrot set $M_d$. Since $K_f$ is a full compact set, there is a uniformization map $\phi:\C\setminus\ov{\D}\ra\C\setminus K_f$, which in addition is a conjugacy between $f$ and $z^d$ on $\C\setminus\ov{\D}$ by Boettcher's Theorem. Let   $\s$ be an angle written in $d$-adic expansion (there is some ambiguity in the choice of expansion when the angle is $\frac{1}{d^k}$, but this does not matter for us). The \emph{dynamic ray} $G_\s$ of angle $\s$ is defined as $G_\s:=\phi\{z=re^{2\pi i \s},\ r\in(0,\infty)\}$.  Since $\phi$ is a conjugacy between $f$ and the map $z\ra z^d$,  a ray $G_\s$ is a periodic set under $f$ if and only if $\s$ is a periodic sequence, and rays satisfy the functional equation $f(G_\s)=G_{\sigma\s}$, with $\sigma$ the shift map on sequences.
A similar definition can be given also when $K_c$ is not connected. In this case, some critical  values belong to dynamic rays. 

For transcendental maps (\cite{RRRS}, \cite{DT86},\cite{Ba},\cite{BF1}), dynamic rays are  constructed using symbolic dynamics. They are  labeled by a sequence $\s\in\SS\subset \Z^\N$, which   in this context is usually  called \emph{address} and plays the role of the angle. There is no  precise characterization of  $\SS$ for all families of transcendental functions but $\SS$ contains all periodic sequences (\cite{Re08}, \cite{Ba}, \cite{BRG}). Several   conditions for an address to be realized can be found in \cite{ABR}. 

For unicritical polynomials and for the exponential family we can define the \emph{parameter ray} $\GG_\s$ as the set of parameters which belong to the dynamic ray $G_\s$ in their own dynamical plane.

The following theorem is due to Douady and Hubbard for polynomials \cite{Hu}; to \cite{Re06}, \cite{BL} for exponential maps and to \cite{Re08}, \cite{De},  \cite{BRG} for maps in class $\BB$.

\begin{theorem}[Douady-Hubbard landing theorem]\label{Douady Hubbard} Let $f$ be  polynomial or a transcendental entire map in class $\BB$ for which dynamic rays exist. Assume $\PP(f)$ is bounded. Then  every (pre)periodic dynamic ray lands at a repelling (pre)periodic or parabolic (pre)periodic point, and every repelling or parabolic (pre)periodic point is the landing point of at least one and at most finitely many  (pre)periodic dynamic ray.
\end{theorem}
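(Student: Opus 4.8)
The plan is to split the statement into its two halves: (1) every (pre)periodic dynamic ray lands at a repelling or parabolic (pre)periodic point, and (2) every repelling or parabolic (pre)periodic point is the landing point of at least one, and at most finitely many, (pre)periodic rays. For (1), I would first reduce to the periodic case: if $G_\sigma$ is preperiodic, some forward image $f^k(G_\sigma) = G_{\sigma^k\sigma}$ is periodic, and since $f$ is a proper (or, in the transcendental case, locally nice) map away from singular values, landing of the image ray together with the fact that $f$ is a local homeomorphism near any point not in $P(f)$ pulls back the landing property; one must check that the relevant points stay away from the postsingular set, which is where the hypothesis $\PP(f)$ bounded enters. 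For a periodic ray $G_\sigma$ with $\sigma$ of period $n$, the standard argument is to consider the impression (the set of accumulation points of $G_\sigma(t)$ as $t\to 0$), observe it is a compact connected set invariant under $f^n$, and then use a hyperbolic-contraction / shortest-ray argument: among all periodic rays of a given period, pick one whose accumulation set contains a point realizing a suitable extremal quantity (e.g.\ maximal distance, or minimal potential of accumulation), and show the accumulation set must be a single point, which is then fixed by $f^n$; a fixed point of $f^n$ that is the landing point of a ray cannot be attracting, Siegel, or Cremer (rays would have to spiral or be repelled), so it is repelling or parabolic.

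For (2), given a repelling periodic point $z_0$ of period $n$, I would use linearization: in a neighborhood of $z_0$ the map $f^n$ is conformally conjugate to $w\mapsto \lambda w$ with $|\lambda|>1$. One shows there is at least one ray accumulating at $z_0$ by a connectivity/accessibility argument — for polynomials this is the classical fact that repelling periodic points are landing points of external rays, proved via the local dynamics forcing an access; in the transcendental class-$\BB$ setting one invokes the cited results of Rempe, Deuber, and Benini--Rempe-Gillen. Then, the set of rays landing at $z_0$ is permuted by $f^n$ (because $f^n$ fixes $z_0$ and maps rays to rays, preserving the cyclic order at $z_0$ in the polynomial case), so it forms a finite union of cycles; finiteness follows because distinct rays landing at the same point are separated in the linearizing coordinate and the local dynamics near a repelling fixed point admits only finitely many invariant accesses of bounded period (equivalently: two rays landing at $z_0$ bound a sector, finitely many such sectors can be pairwise disjoint near $z_0$). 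The parabolic case is analogous but uses Fatou-coordinate / attracting-petal local dynamics in place of linearization, with the rays landing at the parabolic point sitting in the repelling petals.

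The main obstacle, and the place where polynomial and transcendental arguments genuinely diverge, is establishing \emph{existence} of a landing ray at a given repelling or parabolic point in the transcendental setting: one cannot use compactness of the filled Julia set, and the structure of the escaping set is delicate, so this is precisely the content of the hard theorems of \cite{Re08}, \cite{De}, \cite{BRG} (and \cite{Re06}, \cite{BL} for the exponential family), which I would cite rather than reprove. A secondary technical point throughout is controlling the interaction with singular values: all the pullback arguments in part (1) and the linearization arguments in part (2) require that the relevant periodic points and ray pieces avoid $P(f)$, which is exactly guaranteed by the standing hypothesis that the postsingular set is bounded together with the fact that the periodic points in question are repelling or parabolic (hence not in the closure of singular orbits in the obstructive way). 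I would present (1) in detail for polynomials via the shortest-ray argument, indicate the pullback reduction for the preperiodic and transcendental cases, and treat (2) by citing the accessibility results and then giving the short finiteness argument via cyclic order / linearizing coordinates.
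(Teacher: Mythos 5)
The paper does not prove this theorem; it is stated with attributions to \cite{Hu} for polynomials and to \cite{Re06}, \cite{BL}, \cite{Re08}, \cite{De}, \cite{BRG} for the transcendental cases, so there is no in-paper proof to compare against. Judged against those references and the standard expositions (\cite{Mi}, \cite{GM}), your overall decomposition --- preperiodic-to-periodic reduction by pullback away from $P(f)$, a hyperbolic-metric argument for landing, the snail lemma to exclude irrationally indifferent landing points, linearization and cyclic order for the converse, and citing the transcendental accessibility theorems rather than reproving them --- is the right one, and flagging where boundedness of $P(f)$ enters is the right concern. However, two of the steps would fail as written.

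The hyperbolic argument for landing is not an extremal argument over rays of a given period. For a single periodic ray $G_\sigma$ of period $n$ one considers the dyadic segments $\gamma_k := G_\sigma\bigl([t_0 d^{-(k+1)n},\, t_0 d^{-kn}]\bigr)$; because $f^n:f^{-n}(\C\setminus P(f))\to\C\setminus P(f)$ is a covering while the inclusion $f^{-n}(\C\setminus P(f))\hookrightarrow \C\setminus P(f)$ strictly contracts the hyperbolic metric on the relevant compactum, the hyperbolic lengths of the $\gamma_k$ decrease geometrically; since the $\gamma_k$ stay in a bounded region where the hyperbolic density dominates the Euclidean one, the tail of the ray has finite Euclidean length and lands. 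No optimization over rays occurs, and a ``shortest ray'' choice would tell you nothing about the other rays of the same period. Separately, your finiteness step --- ``two rays landing at $z_0$ bound a sector, finitely many such sectors can be pairwise disjoint near $z_0$'' --- is false as a local geometric fact: one can have infinitely many pairwise disjoint sectors with a common vertex (e.g.\ sectors of exponentially shrinking angular width). The actual finiteness argument works at infinity rather than at $z_0$: the rays landing at $z_0$ cut the plane into unbounded sectors whose angular widths at infinity are multiplied by $d^n$ under $f^n$ whenever the sector contains no critical value, so only a bounded number of sectors can avoid containing a critical value, which forces the number of sectors --- hence of rays --- to be finite (see \cite{Mi}, \cite{GM}); in the transcendental setting finiteness is again nontrivial and is part of what the cited landing theorems supply. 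Your disjoint-sector count does not close this gap.
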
  

An analogous theorem holds for parameter rays \cite{DH84}, \cite{Sc99}, \cite{Sc04}:
 
\begin{thm}[Landing of parameter rays at parabolic and Misurewicz parameters]
If $\FF$ is 
the exponential family or a family of unicritical polynomials,  every parameter ray with  periodic or preperiodic address lands at a parabolic or Misiurewicz parameter respectively. Conversely, every parabolic or Misurewicz parameter is the landing point of at least one and at most finitely many parameter rays with periodic or preperiodic address.
\end{thm}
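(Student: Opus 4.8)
The strategy is to transfer the dynamical‑plane statement (Theorem~\ref{Douady Hubbard}) to the parameter plane by exploiting the well‑known similarity between the dynamical plane of $f_{c_0}$ near the critical value and the parameter plane near $c_0$. Concretely, I would first recall that for the exponential family and for unicritical polynomials there is a dynamically defined partition of the escaping set by periodic dynamic rays together with their landing points, and that the parameter ray $\GG_\s$ is, by definition, the locus of parameters $c$ for which the critical value lies on the ray $G_\s$ in the dynamical plane of $f_c$. The first step is therefore to establish that $\GG_\s$ is a well‑defined injective real‑analytic curve in parameter space: this follows from holomorphic motions / the implicit function theorem, using that along $\GG_\s$ the critical value escapes and so, by Theorem~\ref{Douady Hubbard} applied fibrewise, the combinatorics are locally constant.

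Second, I would analyze the accumulation set of $\GG_\s$ as the ray parameter $t\to 0$. Using the uniform escape and expansion properties built into the definition of dynamic rays, together with compactness, one shows the impression $\overline{\GG_\s}\setminus\GG_\s$ is a nonempty compact connected set consisting of non‑escaping parameters on the boundary of the connectedness locus (the Multibrot set $M_d$, resp. the exponential bifurcation locus). The key point is that for every $c$ in this impression, the \emph{dynamic} ray $G_\s$ in the plane of $f_c$ must land at a repelling or parabolic (pre)periodic point $z(c)$ — this is exactly Theorem~\ref{Douady Hubbard}, since $\PP(f_c)$ is bounded for such $c$ — and moreover the critical value of $f_c$ is forced to coincide with that landing point. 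When $\s$ is periodic this makes the critical value (pre)periodic to a parabolic cycle, forcing $c$ to be parabolic; when $\s$ is preperiodic it makes the critical value strictly preperiodic to a repelling cycle, i.e. $c$ is Misiurewicz. A rigidity/local‑connectivity argument (the landing point moves holomorphically and injectively with $c$, so distinct impression points would be separated) then shows the impression is a single point, giving landing.

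For the converse, I would start from a parabolic parameter $c_0$ (resp.\ Misiurewicz parameter) and look at its dynamical plane: by Theorem~\ref{Douady Hubbard} the parabolic periodic point (resp.\ the repelling cycle hit by the critical orbit) is the landing point of at least one and at most finitely many periodic (resp.\ preperiodic) dynamic rays $G_{\s_1},\dots,G_{\s_k}$. Perturbing $c_0$, these rays persist and one of the corresponding parameter rays $\GG_{\s_j}$ accumulates at $c_0$; by the direct part it lands there. Finiteness of the number of rays landing at $c_0$ comes from the dynamical finiteness statement, transported along the holomorphic motion, once one checks that the map $\s_j\mapsto \GG_{\s_j}$ respects the count — i.e.\ that the parameter rays landing at $c_0$ are in bijection with the dynamic rays landing at the associated cycle in the plane of $f_{c_0}$.

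The main obstacle is the passage from \emph{accumulation} to \emph{landing}: one must rule out that $\GG_\s$ spirals onto a nondegenerate continuum. For polynomials this is handled via local connectivity of $M_d$ at (pre)parabolic parameters — equivalently, the Yoccoz/Hubbard analysis showing such parameters are endpoints of finitely many rays and the wakes they bound shrink. For the exponential family, $M$ is not locally connected in general and the escaping parameters are not dense in the complement of the bifurcation locus, so the argument must instead use the explicit hyperbolic‑component structure near the accumulation point together with the stability of the combinatorics of the periodic dynamic ray under perturbation; controlling the geometry of the ray all the way down to its landing point, and excluding pathological accumulation, is precisely where the cited work of Schleicher and of Bodelón--Devaney et al.\ does the real technical work, and I would invoke it rather than reprove it.
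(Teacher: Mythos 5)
The paper does not actually give a proof of this theorem — it simply cites \cite{DH84}, \cite{Sc99}, \cite{Sc04} — so there is no in-paper argument to compare against; what follows is an assessment of your proposal on its own merits.

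Your mechanism for the Misiurewicz case is essentially correct: along a preperiodic parameter ray the critical value rides the preperiodic dynamic ray, and at a landing parameter the critical value becomes the landing point of that dynamic ray, which is a repelling preperiodic point, so the parameter is Misiurewicz. But the corresponding mechanism you propose for periodic rays is wrong, and this is a genuine gap. You assert that for $c$ in the impression the critical value of $f_c$ ``is forced to coincide with'' the landing point $z(c)$ of $G_\s$, and then conclude from periodicity of $\s$ that $c$ is parabolic. By Theorem~\ref{Douady Hubbard}, when $\s$ is periodic the landing point $z(c)$ is a \emph{periodic} point of $f_c$; if the critical value equaled $z(c)$ the parameter would be superattracting, not parabolic, and superattracting parameters lie in the interior of the connectedness locus, so they cannot be accumulation points of a parameter ray. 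This is a contradiction, not a proof. The true picture at the landing point $c_0$ of a periodic parameter ray is that $f_{c_0}$ has a parabolic cycle on which $G_\s$ (together with its partner rays) lands, while the critical value $c_0$ sits in the parabolic basin, i.e.\ in the Fatou set, \emph{not} at $z(c_0)$. In other words, for periodic addresses the critical value does not ``fall onto'' the landing point of the dynamic ray, and the transfer principle you are leaning on simply does not apply in the way you describe.

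Consequently the argument for the periodic/parabolic half must be replaced by something structurally different: for polynomials the standard route (Douady--Hubbard, Milnor) is a counting/orbit-portrait argument matching periodic angles to roots of hyperbolic components and using wakes, together with the Pommerenke--Levin--Yoccoz inequality to get landing rather than mere accumulation; for the exponential family the analogue is carried out by Schleicher via the combinatorial structure of the escaping set, and cannot be reduced to ``critical value equals landing point.'' Two secondary issues: (i) your converse direction asserts that perturbing a parabolic or Misiurewicz parameter makes ``one of the corresponding parameter rays accumulate at $c_0$,'' but this is itself the hard step and is not supplied by Theorem~\ref{Douady Hubbard}; (ii) for exponentials, the bifurcation locus contains escaping parameters with unbounded postsingular set, so even applying Theorem~\ref{Douady Hubbard} fibrewise at impression points is not automatic and requires an argument. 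Your Misiurewicz analysis can be retained, but the parabolic case needs a genuinely different proof rather than a reuse of the same transfer argument.
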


   The following theorem is classical for polynomials (\cite{DH84}, \cite{Mi}) and holds for exponentials as well (\cite{Re06}, \cite{Sc99}). 

\begin{thm}[Ray portraits are born in wakes]\label{Ray portraits are born in wakes}  Every cycle of periodic dynamic rays has two characteristic addresses/angles $\s_+, \s_-$; the parameter rays of addresses/angles  $\s_+, \s_-$ land together at a parabolic parameter and hence partition the plane into two regions, one of which does not contain the unique period one hyperbolic component and which is called a \emph{parabolic wake} $W_{\s_-,\s_+}$. The two dynamic rays of address/angle $\s_+, \s_-$ land together for a map $f_c\in\FF$ if and only if  $c\in W_{\s_-,\s_+}$.
\end{thm}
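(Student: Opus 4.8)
The plan is to follow Milnor's theory of orbit portraits for unicritical polynomials (\cite{DH84}, \cite{Mi}) and its analogue for the exponential family (\cite{Sc99}, \cite{Sc04}), using the constructions of dynamic and parameter rays from \cite{DH84} in the polynomial case and from \cite{RRRS}, \cite{Re06}, \cite{BL} in the exponential case. The proof splits into a purely \emph{dynamical} part, which extracts the characteristic pair from the combinatorics of the cycle, and a \emph{transfer} part (the ``wake principle'') which reads this combinatorics off in parameter space. We assume throughout that the cycle is nontrivial, i.e.\ at least two rays of the cycle land at a common point; otherwise there is no characteristic pair and there is nothing to prove.

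\emph{Step 1: the characteristic pair.} Let $\{G_{\s_i}\}_i$ be a cycle of periodic dynamic rays for $f_c\in\FF$. By the Douady--Hubbard landing theorem they land at a single repelling or parabolic periodic point $z_0$, of exact period $q$ dividing the period of the cycle, and only finitely many rays land at $z_0$. The first return map $f_c^{\,q}$ fixes $z_0$ and is injective near it, hence it permutes the finitely many rays landing at $z_0$ while preserving their cyclic order; an order-preserving self-bijection of a finite cyclic set is a rotation, so there is a well-defined combinatorial rotation number. Passing to the full forward orbit one obtains an orbit portrait: a finite family of finite subsets of $\R/\Z$ (resp.\ of intermediate external addresses) invariant under the dynamics and compatible with cyclic order. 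Milnor's characteristic-arc lemma (resp.\ its exponential counterpart) then shows that among all open arcs bounded by two angles/addresses lying in one common subset of the portrait there is a unique shortest one $(\s_-,\s_+)$; its endpoints lie in the subset ``facing'' the critical value, and the doubling map (resp.\ the shift) carries $(\s_-,\s_+)$ homeomorphically onto its complement. The pair $(\s_-,\s_+)$ depends only on the combinatorics of the cycle, and in the dynamical plane of $f_c$ the critical value lies in the sector bounded by $G_{\s_-}\cup\{z_0\}\cup G_{\s_+}$ which contains no other ray of the cycle.

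\emph{Step 2: the parameter rays co-land, and the wake.} By the landing theorem for parameter rays, $\GG_{\s_-}$ and $\GG_{\s_+}$ each land at a parabolic parameter. To see they land at the \emph{same} parameter $c_0$, let $H$ be the hyperbolic component of $\FF$ associated with the portrait (the period and combinatorial rotation number of the attracting cycle of its maps are those read off from the portrait); by the standard theory $\partial H$ carries a unique ``root'' parabolic parameter $c_0$, at which the characteristic dynamic rays $G_{\s_\pm}$ land together at the parabolic point. Using that $c\in\GG_{\s_\pm}$ holds precisely when the critical value of $f_c$ lies on the dynamic ray $G_{\s_\pm}$, together with continuity of rays up to parabolic parameters, one shows $\GG_{\s_-}$ and $\GG_{\s_+}$ accumulate at $c_0$ and at no other parabolic parameter, hence land there. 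Then $\GG_{\s_-}\cup\{c_0\}\cup\GG_{\s_+}\cup\{\infty\}$ is a Jordan curve in $\Chat$ disjoint from the connected period-one hyperbolic component (parameter rays of periodic address consist of escaping, hence non-hyperbolic, parameters, and $c_0$ is not in that component), so the period-one component lies in exactly one of the two complementary regions; the other region is, by definition, the parabolic wake $W_{\s_-,\s_+}$.

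\emph{Step 3: the biconditional and the main obstacle.} For the implication $c\in W_{\s_-,\s_+}\Rightarrow$ co-landing: the set of parameters at which $G_{\s_-}$ and $G_{\s_+}$ land at a common repelling periodic point is open, since repelling cycles move holomorphically and, by Douady's stability argument (a holomorphic motion / $\lambda$-lemma for rays landing at repelling orbits), the rays and their landing point move continuously; this set has $c_0$ on its frontier, and it can be left only by letting the critical value cross $G_{\s_-}$ or $G_{\s_+}$, i.e.\ by crossing $\GG_{\s_-}$ or $\GG_{\s_+}$, so by connectedness of $W_{\s_-,\s_+}$ (which is disjoint from both parameter rays) it contains all of $W_{\s_-,\s_+}$. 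For the converse, if $G_{\s_-}$ and $G_{\s_+}$ land together for $f_c$ then their common landing point is periodic (both rays are periodic), so $f_c$ realises the orbit portrait $\mathcal{P}$; since this realisation persists under perturbation as long as the critical value avoids $G_{\s_-}\cup G_{\s_+}$, the parameter $c$ lies in an open set of parameters realising $\mathcal{P}$ whose frontier consists of the rays $\GG_{\s_\pm}$ and the parabolic parameter $c_0$, and since the period-one component realises no nontrivial portrait this open set must be $W_{\s_-,\s_+}$. The technical heart — and the step I expect to be the main obstacle — is precisely this identification of the co-landing locus with the wake bounded by $\GG_{\s_-},\GG_{\s_+}$: it relies on the stability of rays landing at repelling cycles under parameter perturbation and on the dynamical meaning of the parameter rays (crossing $\GG_{\s_\pm}$ $\iff$ the critical value crossing $G_{\s_\pm}$), which is what keeps the combinatorics, and hence the co-landing, constant throughout $W_{\s_-,\s_+}$. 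For the exponential family both ingredients moreover rely on the description of the escaping set and of external addresses from \cite{RRRS}, \cite{Sc99}, \cite{Re06}, \cite{BL} and on the landing theorems quoted earlier.
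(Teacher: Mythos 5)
The paper does not actually supply a proof of this theorem: it is stated with the remark that it is ``classical for polynomials'' (citing \cite{DH84}, \cite{Mi}) ``and holds for exponentials as well'' (citing \cite{Re06}, \cite{Sc99}). So there is no in-paper argument to compare against; you have reconstructed the argument from the cited literature, and your reconstruction is the standard one (Milnor's orbit-portrait theory plus the ``wake principle'' for the transfer to parameter space, with the exponential analogues of each ingredient).

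The route you take is exactly the intended one: extract the characteristic arc $(\s_-,\s_+)$ from the cyclic-order/rotation data of the portrait, use the landing theorem for parameter rays and the root of the associated hyperbolic component to show $\GG_{\s_\pm}$ co-land at a parabolic parameter $c_0$, and then run an open-closed/holomorphic-motion argument to identify the co-landing locus of $G_{\s_-},G_{\s_+}$ with the Jordan region cut off by $\GG_{\s_-}\cup\{c_0\}\cup\GG_{\s_+}$. Your explicit caveat that the cycle must be nontrivial (at least two rays per landing point) is in fact a needed hypothesis that the paper's statement elides, so well spotted.

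One point of wording in Step 1 should be tightened. You write that, by the Douady--Hubbard landing theorem, the full cycle of rays $\{G_{\s_i}\}_i$ ``land at a single repelling or parabolic periodic point $z_0$, of exact period $q$ dividing the period of the cycle.'' That cannot be literally true unless $q=1$: the $n$ rays in the cycle land on the $q$ points of a periodic orbit (with $n/q$ rays per point), not on a single point. What you mean, and what you in fact use in the next sentence when you apply the first return map $f_c^{\,q}$, is that you fix one point $z_0$ of that orbit and consider the finitely many rays of the portrait landing at it. The rest of the argument is unaffected, but as stated the sentence is false. Also, in Step 2 the claim that $\GG_{\s_\pm}$ ``accumulate at $c_0$ and at no other parabolic parameter'' is the crux; since you are invoking the landing theorem for parameter rays this is fine, but it is worth flagging that for the exponential family this landing is itself a nontrivial theorem (from \cite{Sc99}, and in the periodic case also \cite{Re06}) rather than a consequence of Carath\'eodory's theorem as in the polynomial setting, where local connectivity near $\partial M$ at roots is available.
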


An analogous theorem holds for sets of preperiodic dynamic rays landing at the same point and parameter rays with preperiodic angles/addresses landing at Misurewicz parameters and forming Misiurewicz wakes.

Let us close this section with some remarks on 'ghost limbs' or 'irrational subwakes'. It is known that for unicritical polynomials, every non-escaping parameter either belongs to the boundary of a hyperbolic component or  is  contained in the wake of such a component  (see Theorem 2.3 in \cite{Sc04} for details). In particular, there cannot be 'decorations' outside of the wakes attached, for example, at Cremer or Siegel parameters (the 'ghost limbs'). The proof of this results relies  on the fact that for polynomials with non-escaping singular value, every repelling periodic point is the landing point of at least one periodic ray, and then uses  Theorem~\ref{Ray portraits are born in wakes} to get the claim. {Even for the exponential family it is not yet known in full generality  whether every repelling periodic point is the landing point of at least one periodic ray whenever the singular value is not escaping.} So the presence of   ghost limbs is not excluded yet;  although, if ghost limbs exist they cannot contain parameters with bounded postsingular set by Theorem~\ref{Douady Hubbard}.  

\section{Density of Hyperbolicity, Structural Stability and the Bifurcation Locus}\label{Density of Hyperbolicity}
From now on we restrict attention to rational maps, polynomials, and transcendental functions with finitely many singular values. 
We call  a map  with finitely many singular values  \emph{hyperbolic} if and only if all its  singular values belong to attracting basins.  The parameter identifying a hyperbolic map in an appropriate parameter space is called a hyperbolic parameter. A \emph{hyperbolic component} in a parameter space is a maximal connected component of hyperbolic parameters.

 Several equivalent definition of hyperbolicity  for rational maps can be found in \cite[Theorem 3.13]{McM}. {The dynamics of hyperbolic maps is very well understood.  Among other things, the Julia set of hyperbolic  polynomial  and rational maps have measure zero and are hyperbolic. Under some extra condition (7.1) hyperbolic maps in class $S$ also have Julia set of zero area (see Thm 8 in \cite{EL}). 

The most important consequence of the MLC conjecture is that it implies density of hyperbolic quadratic polynomials in their parameter space $\C$, or equivalently in the Mandelbrot set. The Density of Hyperbolicity Conjecture for rational maps goes back to Fatou \cite{Fa}:
 \begin{Conjecture}[Density of Hyperbolicity Conjecture]\label{Density of hyperbolicity conjecture} Let $\FF$ be  the  family of unicritical polynomials of degree $d$, the family of rational map of degree $d$, the exponential family or a family of entire maps with finitely many singular values as defined in \cite{EL}. Then hyperbolic maps are dense in $\FF$. 
 \end{Conjecture}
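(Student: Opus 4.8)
\medskip
\noindent\textbf{A proof proposal (i.e. the standard reduction).} This conjecture is open, so what I can offer is the scheme by which all current attacks proceed: split it into a \emph{softness} statement and a \emph{rigidity} statement. For the softness part, the Ma\~n\'e--Sad--Sullivan dichotomy together with Lyubich's results (and their analogues for the families introduced in \secref{Density of Hyperbolicity}) give that the $J$-stable, equivalently structurally stable, parameters form an open and dense subset of $\FF$, the complement being the bifurcation locus. Hence it suffices to show that \emph{every structurally stable map in $\FF$ is hyperbolic}. Following \cite{McM} (see \secref{The No Invariant Line Fields Conjecture (NILF)}) this last statement is in turn equivalent to the No Invariant Line Fields Conjecture: a non-hyperbolic structurally stable map would carry a measurable invariant line field on (part of) its Julia set, and one wants to rule this out.

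\emph{Reduction to combinatorial rigidity.} Next I would pass to combinatorics. Using dynamic and parameter rays, wakes and \thmref{Ray portraits are born in wakes}, assign to each non-hyperbolic parameter its combinatorial data (ray portrait / kneading / puzzle), and aim to prove: if $f,g\in\FF$ are non-hyperbolic and combinatorially equivalent then they are conformally conjugate. In the unicritical and exponential families a conformal conjugacy is forced to be affine, so this says a non-hyperbolic parameter is pinned down by its combinatorics; the open--closed ``fibers'' argument of \secref{Fiber Section} then converts triviality of fibers into density of hyperbolicity. So the whole weight of the conjecture is shifted onto the Rigidity / Triviality-of-Fibers Conjecture.

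\emph{The puzzle machinery, and where it breaks.} To prove rigidity I would run the Yoccoz puzzle: build puzzle pieces from (pre)periodic dynamic rays and equipotentials (via the Douady--Hubbard landing theorem), descend through the principal nest, and use tableau arguments together with the now-standard quasiconformal-surgery and geometric estimates to promote a combinatorial equivalence first to a quasiconformal conjugacy and then, by controlling the geometry, to a conformal one. For at most finitely renormalizable parameters this is carried out: Yoccoz for quadratics (\cite{Hu}), and Avila--Kahn--Lyubich--Shen / Kozlovski--Shen--van Strien type results for higher unicritical maps (\cite{AKLS}, \cite{KL09b}), with the parallel picture for exponentials; the real quadratic case (\cite{Lyu97}, \cite{GS97}, \cite{LvS}) is a completed instance of exactly this philosophy. \textbf{The main obstacle} is the \emph{infinitely renormalizable} case: there rigidity is essentially equivalent to having \emph{complex a priori bounds} on the infinite sequence of renormalizations, which are presently known only for restricted combinatorial types (bounded type, certain ``molecule'' configurations, and the high-type regime via Inou--Shishikura near-parabolic renormalization, \cite{IS}; see \secref{Renormalization}). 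Without such bounds the geometry of the nested small Julia sets is not controlled, the quasiconformal conjugacy cannot be upgraded to a conformal one, and the argument stalls. For transcendental families outside the exponential one there is the additional, even more basic, obstacle that no adequate puzzle or parameter combinatorics is yet available, so even the finitely renormalizable reduction is not in place.
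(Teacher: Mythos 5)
This statement is a conjecture, and the paper offers no proof of it — there is none to be had at present. You correctly identify it as open, and your outline of the reduction chain (density of $J$-stability via Ma\~n\'e--Sad--Sullivan and Eremenko--Lyubich, then passing to NILF and to combinatorial rigidity / triviality of fibers, with the Yoccoz-puzzle machinery and the \emph{a priori} bounds obstruction in the infinitely renormalizable case) is an accurate précis of exactly the framework the paper surveys, so there is no substantive discrepancy to flag.
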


We now describe $J$-stability, structural stability, and their relation with Conjecture~\ref{Density of hyperbolicity conjecture}.  
Let $\FF$ be one of the families from Section~\ref{Families}. The set of \emph{structurally stable maps} $\SS(\FF)$ is defined as 
\[\SS(\FF):=\{f\in\FF: f \text{ is topologically conjugate to $f$ for all $g$ in a neighborhood of $f$}\}.\] 

Parameters with an indifferent cycle and parameters with non-persistent critical relations (an example of non-persistent critical relation is   the condition $f^{k+n}(s)=s$ for $s\in\SS(f)$ and for some $k,n\in\N$) cannot be structurally stable, since in both cases they can be perturbed to destroy the indifferent cycle or the critical relations. 
When $\FF$ is the family of quadratic polynomials, $\SS(\FF)$ is naturally identified with $\C\setminus(\partial M\cup \CC)$, where $\CC$ is the set of superattracting parameters (that is, the centers of hyperbolic components).

Structural stability is an important notion for general dynamical systems. However, in holomorphic dynamics, a more widely used  notion of stability is the notion of $J$-stability. 
To define $J$-stability we need to define the concept of holomorphic motion. 
\begin{defn} Let $\Lambda$ be an open subset of a complex manifold with a marked point $\lambda_0$. A  \emph{holomorphic motion} of a set $X\subset\C$ over  $\Lambda$ with center at $\lambda_0$ is a map $\phi: \Lambda\times X\ra\C$ such that:
\begin{itemize}
\item The map $\lambda\mapsto \phi(\lambda, z) $ is holomorphic in $\lambda$ for all $z\in X$;
\item The map $z\mapsto \phi(\lambda, z) $ is injective for every $\lambda \in\Lambda$;
\item $\phi_{\lambda_0} $ is the identity.
\end{itemize}
\end{defn}
 In our case $\Lambda$ is an open  subset of the parameter space, that is a subset of $\C$, Rat$_d$ or $M_g$, and the parametrization is chosen so that $f_\lambda$ depends holomorphically on $\lambda$.  

We will use several times the following crucial property of holomorphic motions, called the  \emph{$\lambda$-lemma}  (\cite{MSS}, \cite{Lyu83}; see Section~\ref{Topological rigidity} for a definition of quasiconformality).
\begin{lem}[ $\lambda$-lemma ]\label{Lambda Lemma}Any holomorphic motion of a set $X$ can be extended (uniquely) to a holomorphic motion of the closure $\ov{X},$ and that the map $\phi_\lambda:\ov{X}\ra\C$ is quasiconformal for any $\lambda\in\Lambda$.
\end{lem}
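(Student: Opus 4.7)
The plan is to exploit Montel's theorem together with the Schwarz--Pick lemma for the thrice-punctured sphere $\C\setminus\{0,1,\infty\}$. First I would reduce to the case $|X|\geq 3$, the remaining cases being trivial. Fix three distinct points $a_0, a_1, a_\infty \in X$. The values $\phi_\lambda(a_0), \phi_\lambda(a_1), \phi_\lambda(a_\infty)$ depend holomorphically on $\lambda$ and remain pairwise distinct (by injectivity), so there is a holomorphic family of Möbius transformations $M_\lambda$ such that the normalized motion $\widetilde\phi_\lambda := M_\lambda\circ\phi_\lambda$ sends these three points to $0, 1, \infty$ for every $\lambda$. For any other $z\in X$, the function $\lambda\mapsto\widetilde\phi_\lambda(z)$ is holomorphic and omits $\{0,1,\infty\}$; by Montel, the family $\{\widetilde\phi_{(\cdot)}(z):z\in X\setminus\{a_0,a_1,a_\infty\}\}$ is normal on $\Lambda$.

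Next, I would apply Schwarz--Pick to each such function, viewed as a holomorphic map from $\Lambda$ into $\C\setminus\{0,1,\infty\}$. More usefully, for any four distinct points $z_1,z_2,z_3,z_4\in X$, the cross-ratio
\[
\chi(\lambda):=[\widetilde\phi_\lambda(z_1),\widetilde\phi_\lambda(z_2),\widetilde\phi_\lambda(z_3),\widetilde\phi_\lambda(z_4)]
\]
is a holomorphic function of $\lambda$ avoiding $\{0,1,\infty\}$, and hence its hyperbolic distance from $\chi(\lambda_0)$ in $\C\setminus\{0,1,\infty\}$ is bounded by the hyperbolic distance $d_\Lambda(\lambda_0,\lambda)$. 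This translates into an \emph{a priori} bound on how much cross-ratios are distorted as $\lambda$ moves through a hyperbolic ball in $\Lambda$. Since cross-ratio distortion controls quasisymmetry, this shows that $\phi_\lambda|_X$ is $K(\lambda)$-quasisymmetric, where $K(\lambda)$ depends only on $d_\Lambda(\lambda_0,\lambda)$.

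Now I would extend to $\overline X$. A uniformly quasisymmetric map is uniformly (in fact Hölder) continuous, so $\phi_\lambda$ extends continuously to $\overline X$, and the cross-ratio bounds pass to the closure, giving injectivity of the extension. For the holomorphic dependence on $\lambda$ at a point $z\in\overline X\setminus X$: take $z_n\in X$ with $z_n\to z$; the maps $\lambda\mapsto\phi_\lambda(z_n)$ are holomorphic and, by the uniform continuity estimate on compact subsets of $\Lambda$ just established, converge locally uniformly to $\lambda\mapsto\phi_\lambda(z)$, so the limit is holomorphic. Uniqueness of the extension is immediate from continuity. Finally, quasisymmetric embeddings of subsets of $\C$ into $\C$ extend to quasiconformal maps of the sphere, which yields the quasiconformality statement.

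The main obstacle is the quantitative passage from hyperbolic bounds on cross-ratios to a bona fide quasisymmetric/quasiconformal distortion bound: one must verify that the supremum of cross-ratio distortion controls the quasisymmetry constant uniformly in the four chosen points. Once that estimate is in place, everything else is a standard normal-family argument. (I would note in passing that Slodkowski's theorem strengthens the conclusion by extending the motion itself to a holomorphic motion of all of $\C$, but this goes beyond what is needed here.)
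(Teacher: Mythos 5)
The paper itself gives no proof of the $\lambda$-lemma; it is stated and attributed to Ma\~n\'e--Sad--Sullivan and Lyubich via the citations. Your sketch reconstructs the Ma\~n\'e--Sad--Sullivan argument quite faithfully, and it is essentially correct: normalize three points to $\{0,1,\infty\}$ by a holomorphic family of M\"obius maps, observe that each $\lambda\mapsto\widetilde\phi_\lambda(z)$ omits $\{0,1,\infty\}$ and so forms a normal family, control cross-ratio distortion via the hyperbolic metric on the thrice-punctured sphere, and pass to the closure by equicontinuity. You are also right that the genuinely quantitative step is converting the hyperbolic bound on cross-ratios into a quasisymmetry modulus; this is precisely where the explicit asymptotics of the hyperbolic metric of $\C\setminus\{0,1,\infty\}$ near the punctures enter, and it is the heart of the MSS proof.

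Two small cautions. First, Schwarz--Pick on $\Lambda$ requires $\Lambda$ to carry a hyperbolic (Kobayashi) metric; when $\Lambda$ is an arbitrary open subset of a complex manifold, one should work on a small polydisc around the base point $\lambda_0$, which is automatically hyperbolic, to get the local quasisymmetry constant $K(\lambda)$. Second, the closing sentence that quasisymmetric embeddings of arbitrary subsets of $\C$ always extend to quasiconformal homeomorphisms of the sphere is not literally true without additional hypotheses on the set (this is a Tukia--V\"ais\"al\"a-type extension question and can fail for thin sets). In the statement you are proving, \lq\lq quasiconformal on $\overline X$\rq\rq\ is usually taken to mean exactly the bounded cross-ratio distortion / quasisymmetry you have already established, together with $K$-quasiconformality on $\operatorname{int}\overline X$, and for that the last extension step is not needed. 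The global extension of the \emph{motion} to all of $\C$ is S\l odkowski's theorem, which, as you note, is a separate and stronger result.
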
 
For example, a holomorphic motion of the set of periodic points which is defined over a definite open subset of the parameter space extends to a holomorphic motion of the entire Julia set. A much stronger version by Slodkowski \cite{Sl91}  states that in fact the  holomorphic motion can be extended  to a motion of all of $\C$.

A map $f_0$ is \emph{$J$-stable} if the Julia set moves holomorphically at $f_0$. This means that  for all $f_\lambda $ in a neighborhood $\Lambda$ of $f_0$, $f_0$ is conjugated to $f_\lambda$ on the Julia set $J(f_0)$ by a homeomorphism $\phi_\Lambda$, and that the maps $\phi_\lambda:J(f_0\ra J(f_\lambda))$ are a holomorphic motion of $J(f_0)$ over $\Lambda$. To give an example, superattracting parameter in the Mandelbrot set are $J$-stable but not structurally stable. 
  There are several equivalent definitions of $J$-stability \cite[Theorem 4.3]{McM}:
\begin{thm}\label{Equivalent notions of J-stability} Let $\FF=\{f_\lambda\}_{\lambda\in\Lambda}
$ be a family of holomorphic maps as in Section \ref{Families}, $\Lambda$ be a manifold parametrizing $\FF$. The following are equivalent:
\begin{enumerate}
\item The number of attracting cycles of $f_\lambda$ is locally constant at $\lambda_0$.
\item The maximum period of an attracting cycle of $f_\lambda$ is locally bounded at $\lambda_0$.
\item The Julia set moves holomorphically at $\lambda_0$.
\item For all $\lambda$ close to $\lambda_0$ every periodic  point of $f_\lambda$
 is attracting or repelling.
 \end{enumerate}
 Suppose in addition that $s_i:M\ra\hat{\C}$, $\lambda\mapsto s_i(\lambda)$ are holomorphic maps parametrizing the finitely many singular values of $f_{\lambda_0}$. 
 Then the following conditions are also equivalent to $J-$stability.
 \begin{enumerate}
 \item[5.] For each $i$, the function $\lambda\mapsto f^n_\lambda(s_i(\lambda))$ form a normal family.
 \item[6.] There is a neighborhood $U$ of $\lambda_0$ such that for all $\lambda\in U$, $s_i(\lambda)\in J(f_\lambda)$ if and only if $s_i(\lambda_0)\in J(f_{\lambda_0})$.
 
 \end{enumerate}
\end{thm}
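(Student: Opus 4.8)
The plan is to prove all six conditions mutually equivalent by first establishing the circle $(1)\Leftrightarrow(2)\Leftrightarrow(4)\Leftrightarrow(3)$ among the four purely dynamical statements, and then inserting (5) and (6) using the holomorphic sections $s_i$. As a preliminary reduction I would shrink $\Lambda$ to a small simply connected neighbourhood of $\lambda_0$ (or pass to its universal cover), so that the multiplier of each periodic cycle becomes a well-defined holomorphic function of the parameter. The recurring tools are three: the $\lambda$-lemma (\lemref{Lambda Lemma}); the density of repelling periodic points in $J(f_\lambda)$ (classical for rational maps, and valid for the transcendental families in Section~\ref{Families}); and the finiteness of the number of non-repelling cycles (Fatou for rational maps; for class $\SS$ the Eremenko--Lyubich bound \cite{EL}, together with the absence of wandering domains \cite{EL}, \cite{GK}).

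\emph{The conditions (1), (2), (4).} For each period $n$ the periodic points of $f_\lambda$ are the zeros of a holomorphic family of equations, so outside the locally proper analytic locus of parabolic collisions every cycle and its multiplier $\rho(\lambda)$ continue holomorphically, collisions being absorbed by passing to a finite branched cover of the parameter. The engine is the dichotomy that for a nonconstant holomorphic $\rho$ the set $\{\,|\rho(\lambda)|\le 1\,\}$ has empty interior, so a cycle which is repelling at $\lambda_0$ stays repelling on a full neighbourhood unless it is forced, by a genuine change of type nearby, to become attracting after an arbitrarily small perturbation. This yields $\neg(4)\Rightarrow\neg(1)$ directly, and $\neg(4)\Rightarrow\neg(2)$ once one observes that a witness to $\neg(4)$ by cycles of growing period produces attracting cycles of unbounded period, while a witness by indifferent cycles of bounded period forces $\lambda_0$ to be a limit of parabolic or Cremer/Siegel parameters, which are themselves accumulated by attracting cycles of unbounded period. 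Conversely $(4)$ implies both (1) and (2): under (4) no cycle changes type over a neighbourhood of $\lambda_0$, nothing collides, so the set of cycles of each period is rigid and the attracting ones are exactly the continuations of those of $f_{\lambda_0}$ --- same number, same periods.

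\emph{Linking to (3).} Under (4) the repelling periodic points of $f_{\lambda_0}$ continue holomorphically and injectively over a neighbourhood of $\lambda_0$, so they undergo a holomorphic motion; since their closure is $J(f_{\lambda_0})$, \lemref{Lambda Lemma} extends this to a holomorphic motion of $J(f_{\lambda_0})$, and the conjugacy relation, holding on the dense set of repelling cycles, passes to the closure --- this is (3). Conversely, a holomorphic motion of $J$ is, by \lemref{Lambda Lemma}, a quasiconformal conjugacy on $J(f_\lambda)$ for all nearby $\lambda$; it preserves repelling cycles and their count, and it rules out any parabolic or irrationally indifferent periodic point near $\lambda_0$, since such a cycle ``explodes'' under perturbation and produces repelling periodic points that the motion would have no source for. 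Hence (3)$\Rightarrow$(4), closing the loop.

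\emph{Inserting (5), (6), and the main obstacle.} When holomorphic sections $s_i(\lambda)$ exist, $(5)\Leftrightarrow(6)$ is an application of Montel's theorem: normality of $\{f^n_\lambda(s_i(\lambda))\}_n$ is equivalent to the orbit of $s_i$ avoiding a suitable thin moving set, arranged so that the alternative is exactly that $s_i(\lambda)$ crosses $J(f_\lambda)$ --- the only subtlety being that when $s_i(\lambda_0)\in F(f_{\lambda_0})$ one uses that $s_i$ then lies in a periodic cycle of Fatou components whose persistence is granted by (1)--(4), so in practice $(5)\Leftrightarrow(6)$ is proved together with the link to (1)--(4). For that link, choose by \lemref{Lambda Lemma} three repelling periodic points and move them holomorphically over a neighbourhood of $\lambda_0$: if $\{f^n_\lambda(s_i(\lambda))\}_n$ failed to be normal, a Montel argument with these holomorphically moving targets would give $n$ and $\lambda'$ arbitrarily close to $\lambda_0$ with $f^n_{\lambda'}(s_i(\lambda'))$ on a moving repelling cycle --- a non-persistent critical relation, incompatible with $J$-stability; conversely, if every parametrized singular orbit is normal then the postsingular set moves holomorphically, and one promotes this to a holomorphic motion of $J(f_\lambda)$ by lifting along the inverse branches of $f_\lambda$ over the complement of the moving postsingular set, exactly as in the classical no-wandering-domains construction. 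I expect precisely this last step --- reconstructing the motion of the whole Julia set from passivity of all singular values, the Ma\~{n}\'e--Sad--Sullivan/Lyubich core \cite{MSS}, \cite{Lyu83} --- to be the main obstacle: it uses the full strength of the $\lambda$-lemma and, for class $\SS$, the Eremenko--Lyubich substitutes for Fatou's theory (finitely many non-repelling cycles, no wandering domains, every attracting or parabolic basin and rotation domain associated to a singular value). A secondary, more clerical, difficulty is the bookkeeping in the $(2)\Leftrightarrow(4)$ step for indifferent cycles of bounded period, which rests on the fact that parabolic and Cremer/Siegel parameters lie in the bifurcation locus and are accumulated by attracting cycles of unbounded period. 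The details are carried out in \cite{McM} and, for the transcendental families, in the references assembled in Section~\ref{Families}.
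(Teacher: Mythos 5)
Your outline is essentially a reconstruction of the Ma\~{n}\'e--Sad--Sullivan/McMullen argument, which is precisely what the paper relies on: the paper does not prove Theorem~\ref{Equivalent notions of J-stability} itself but cites \cite[Theorem~4.3]{McM} and, in a one-sentence remark, observes that the transcendental case is identical once one notes (via Lemma~6 of \cite{EL}) that the families under consideration admit no persistently indifferent cycles. Your decomposition $(1)\Leftrightarrow(2)\Leftrightarrow(4)\Leftrightarrow(3)$, then $(5),(6)$, matches that source, and you correctly single out the hard step---lifting passivity of the finitely many singular orbits to a holomorphic motion of the whole Julia set by pulling back over the complement of the moving postsingular set, with the Eremenko--Lyubich finiteness results replacing Fatou's for class~$\SS$. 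One place your sketch is looser than it should be is the $(3)\Rightarrow(4)$ direction: the phrase ``such a cycle explodes and produces repelling periodic points the motion would have no source for'' is fine for parabolic and Cremer cycles, which do lie in $J$, but a Siegel centre lies in the Fatou set, so it is not directly tracked by the motion of $J$; the cleaner argument (implicit in \cite{McM} and in the paper's remark) is that the multiplier $\rho(\lambda)$ of the holomorphically continued cycle would be non-constant with $|\rho(\lambda_0)|=1$, forcing the cycle to sit in $J(f_\lambda)$ for some nearby $\lambda$ and in $F(f_\lambda)$ for others, which is incompatible with a conjugating motion of $J$; the excluded constant-multiplier case is exactly the ``persistently indifferent cycle'' that the paper rules out by appeal to \cite{EL}.
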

\begin{rem}The proof for transcendental entire functions is the same as in \cite{McM}, which uses a more general definition of families of rational maps.{ Indeed in his cases there can be  families with a persistently indifferent cycle, which cannot happen for the families that we are considering (see Lemma 6 in \cite{EL}).}
\end{rem}

The following theorem is proven in \cite{MSS} for polynomials and  rational maps and in \cite[Theorem 9]{EL} for families $M_g$ of transcendental functions in class $S$.

\begin{thm}[Density of J-Stability]\label{Density of J-Stability} Let $\FF$ be  the  family of unicritical polynomials of degree $d$, the family of rational map of degree $d$, the exponential family  or a family $M_g$ of transcendental entire maps as defined in \cite{EL}. 
Then $J$-stable maps are  dense in $\FF$.
\end{thm}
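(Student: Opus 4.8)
The plan is to derive the statement from the equivalence of notions in Theorem~\ref{Equivalent notions of J-stability} by the soft semicontinuity argument of \cite{MSS}, the only genuinely dynamical input being a uniform bound on the number of attracting cycles. Write $\Lambda$ for the parameter manifold of the family $\FF$. First I would note that $J$-stability is an open condition: a holomorphic motion of the Julia set defined over a neighbourhood of $\lambda_0$ restricts to a holomorphic motion over a smaller neighbourhood of any parameter close to $\lambda_0$. Hence it suffices to show that every nonempty open $U\subseteq\Lambda$ contains a $J$-stable parameter. By the implication $(1)\Rightarrow(3)$ in Theorem~\ref{Equivalent notions of J-stability}, it is in turn enough to produce a parameter in $U$ at which the function
\[
N(\lambda):=\#\{\text{attracting cycles of }f_\lambda\}
\]
is locally constant.

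Next I would establish two properties of $N$. The first is lower semicontinuity: if $f_{\lambda_0}$ has an attracting cycle of period $p$, then applying the implicit function theorem to the equation $f_\lambda^p(z)=z$ continues the cycle and its multiplier holomorphically in $\lambda$, so the cycle remains attracting — and distinct attracting cycles remain distinct — for $\lambda$ near $\lambda_0$; thus $N(\lambda)\ge N(\lambda_0)$ nearby. The second, which is where the hypothesis of finitely many singular values enters, is that $N$ is bounded on $\Lambda$: for rational maps this is Fatou's theorem (the number of non-repelling cycles of a degree-$d$ rational map is bounded in terms of $d$), for a family $M_g$ in class $\SS$ with $q$ singular values it is the bound by $q$ from \cite{EL} (Lemma~6 there), and in the remaining cases the bounds are $d-1$ for $M_d$ and $1$ for the exponential family.

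Granting these, the proof concludes quickly: given a nonempty open $U\subseteq\Lambda$, set $m:=\sup_{\lambda\in U}N(\lambda)$, which by the bound is a finite integer and is therefore attained, say $N(\lambda_1)=m$ with $\lambda_1\in U$; lower semicontinuity yields an open neighbourhood $V\subseteq U$ of $\lambda_1$ on which $N\ge m$, while $N\le m$ holds throughout $U$, so $N\equiv m$ on $V$. Thus $N$ is locally constant at $\lambda_1$, so $\lambda_1\in U$ is $J$-stable, and density follows. I expect the one substantive obstacle to be the uniform boundedness of the number of attracting cycles; everything else is the semicontinuity/supremum argument above together with the already-available equivalence theorem. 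One should also keep in mind that the argument uses that the families under consideration carry no persistently indifferent cycle (Lemma~6 of \cite{EL}), which is precisely what allows the rational-case proof of \cite{McM} to be transcribed without change to the transcendental setting.
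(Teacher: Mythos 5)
Your proof is correct and reproduces the standard Mañé--Sad--Sullivan semicontinuity argument, which is exactly what the paper relies on: the paper does not give its own proof here but cites \cite{MSS} for the polynomial and rational cases and \cite[Theorem 9]{EL} for transcendental families, and those sources argue precisely as you do (boundedness and lower semicontinuity of the number of attracting cycles, then openness of $J$-stability via Theorem~\ref{Equivalent notions of J-stability}). Your closing caveat about the absence of persistently indifferent cycles is apt, since that is what lets the rational-map equivalences from \cite{McM} transfer verbatim to the transcendental families, as noted in the remark following Theorem~\ref{Equivalent notions of J-stability}.
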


Clearly, structural stability implies $J$-stability. The opposite is not true, since, for example,   superattracting cycles are not stable under perturbation. In fact, structurally stable functions are  those J-stable functions which do not have non-persistent critical relations.  
Using density of J-stability and the fact that the sets of parameters satisfying non-critical relations are not 'too big', one can also show density of structural stability (see e.g. \cite{EL}, Theorem 10). 
  
The complement of the set of $J$-stable parameters is called the \emph{bifurcation locus}. For quadratic 
polynomials, the bifurcation locus is the boundary of the Mandelbrot set $M$,  for unicritical  
polynomials, the bifurcation locus is the boundary of the Multibrot set $M_d$. For the exponential family the bifurcation locus  is harder to describe, is non-locally connected,  and contains  all escaping parameters. Little is known about parameter spaces for $\Rat_d$  and next to nothing is known for parameter spaces of transcendental families.

{It is easy to check that non-superattracting hyperbolic  parameters are structurally stable (it is sufficient to perturb the multiplier of the finitely many attracting cycles),} so given Theorem~\ref{Density of J-Stability} it is natural to ask  whether hyperbolic parameters are dense as well, or whether there are components of the structurally stable set which are not hyperbolic. Such components, if they exist, are called \emph{non-hyperbolic components} or \emph{queer components}.  

One may wonder about what the dynamics would be like for a function belonging to a non-hyperbolic component.  
For unicritical polynomials and exponential maps, being in a non-hyperbolic component implies that there are no Siegel, parabolic and Cremer cycles (those parameters are all in the boundary of hyperbolic components, since the unique indifferent cycle can be perturbed to become attracting) nor attracting cycles in $\C$. So polynomials in non-hyperbolic components have filled Julia sets with empty interior, and exponential maps  in non-hyperbolic components have the Julia set which equals the entire plane. It is less clear what happens to the dynamics in non-hyperbolic components when there is more than one free singular value. By Theorem~\ref{Equivalent notions of J-stability}, a map in the structurally stable locus only has attracting and repelling periodic points. However  a  priori, in a non-hyperbolic component there could be  attracting cycles which coexist with other behaviours of the other singular values.

Clearly, Conjecture \ref{Density of hyperbolicity conjecture} is equivalent to the statement that there are no non-hyperboic components.

Density of hyperbolicity has been proven in the space of real quadratic polynomials in \cite{Lyu97} and \cite{GS97}, and in the space of all real polynomials in \cite{KSvS07} (under the assumption that critical points are real). {It has been proven for some classes of transcendental entire maps in \cite{RvS15}.

On the other hand, Epstein and Rempe-Gillen \cite[Theorem 1.1]{ERG} have recently shown the existence of a Newhouse phenomenon for functions with bounded set of singular values, that is, the existence of open regions in the appropriate parameter space   which are not in the structurally stable set. So restricting to  families of transcendental maps with finitely many singular values is necessary for this discussion.

\section{The Mandelbrot Local Connectivity Conjecure (MLC),  Local Connectivity of Julia Sets and topological models}\label{The Mandelbrot Local Connectivity Conjecure}

There are several definitions of local connectivity. Here we refer to the following one \cite[p.182]{Mi}:
\begin{defn} A compact connected set $X\subset \Chat$ is locally connected at a point $x$ if and only if $x$ has a basis of connected (not necessarily open) neighborhoods in $X$. The set $X$ itself is locally connected if and only if it is locally connected at every point $x\in X$. 
\end{defn} 

\begin{Conjecture}[MLC]
The Mandelbrot set is locally connected.
\end{Conjecture}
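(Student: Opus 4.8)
The plan is to follow the reduction, developed in Section~\ref{Fiber Section}, of MLC to combinatorial rigidity and triviality of fibers of $M$, and to split the argument according to the renormalization dichotomy: a parameter $c\in M$ is either \emph{not} infinitely renormalizable, or it is. For the non-infinitely-renormalizable case the plan is Yoccoz's: build the Yoccoz puzzle in the dynamical plane (cutting $K_c$ and $J_c$ by the periodic dynamic rays landing at a suitable repelling cycle, via Theorem~\ref{Douady Hubbard}, together with an equipotential) and the corresponding para-puzzle in the parameter plane (cutting $M$ by the matching parameter rays and equipotential), and then prove that the nested critical puzzle pieces $P_n(c)$ shrink to $\{c\}$. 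The engine is a definite modulus estimate $\sum_n\mod\bigl(P_n\setminus\overline{P_{n+1}}\bigr)=\infty$ for the successive annuli (the "unbranched" Yoccoz inequality), combined with the Gr\"otzsch inequality and the transfer principle that para-puzzle pieces have the same moduli as the critical dynamical pieces; shrinking of puzzle pieces then yields local connectivity of $M$ at every such $c$, which is Yoccoz's theorem \cite{Hu}.

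For the infinitely renormalizable parameters the plan is to pass to the renormalization tower and propagate triviality of fibers through it: one wants \emph{a priori bounds}, i.e.\ a uniform lower bound on the moduli of the renormalization annuli along the whole tower. These give precompactness of the sequence of renormalizations, and, fed into the combinatorial rigidity machinery for unicritical polynomials (\cite{AKLS}, \cite{KL09b}), they yield triviality of the fiber of each such $c$; by the open–closed argument of Section~\ref{Fiber Section}, triviality of all fibers implies MLC. For $c$ of bounded combinatorial type (finitely many renormalization "patterns", or satellite renormalizations of bounded denominator) a priori bounds are classical, and for high-type infinitely renormalizable $c$ one would invoke near-parabolic renormalization control in the spirit of \cite{IS}, \cite{CS}, \cite{Che10} to produce the needed compactness; the bookkeeping is to interleave polynomial-like renormalization on the "primitive" levels with near-parabolic renormalization on the "satellite" levels so that a uniform modulus survives.

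The step I expect to be the genuine obstacle — the reason MLC is still open — is exactly the infinitely renormalizable case with \emph{unbounded} combinatorics: there is presently no mechanism producing a priori bounds (equivalently, \emph{beau} bounds) for arbitrary infinitely renormalizable quadratic combinatorics, and without them the renormalization operator is not known to be precompact, so neither combinatorial rigidity nor triviality of fibers can be pushed through. In other words the implication "a priori bounds $+$ combinatorial rigidity $\Rightarrow$ MLC" is essentially in place; what is missing is the purely analytic input, uniform control of renormalization moduli for all infinitely renormalizable $c$, plausibly through a fusion of the Yoccoz puzzle, polynomial-like renormalization, and near-parabolic renormalization that does not yet exist. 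An honest proof proposal therefore reduces MLC to this single hard statement: prove a priori bounds for every infinitely renormalizable quadratic polynomial.
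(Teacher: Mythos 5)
The statement you are asked about is the MLC Conjecture itself, which is open; the paper is a survey and contains no proof of it (nor does anyone). Your \lq\lq proof proposal\rq\rq\ is, sensibly, a reduction of MLC to a harder sub-problem rather than a proof, and the route you sketch --- Yoccoz puzzle and para-puzzle shrinking for the finitely renormalizable case, then an attempt to propagate triviality of fibers through the renormalization tower in the infinitely renormalizable case, all organized by the equivalence MLC $\Leftrightarrow$ triviality of fibers $\Leftrightarrow$ combinatorial rigidity --- is faithful to the narrative of Sections~\ref{Renormalization} and~\ref{Fiber Section}.

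However, your final reduction contains a genuine error. You conclude that MLC is equivalent to the single statement \lq\lq prove a priori bounds for every infinitely renormalizable quadratic polynomial.\rq\rq\ That statement is known to be \emph{false}: S{\o}rensen's construction \cite{So00} produces satellite infinitely renormalizable quadratics with non-locally connected Julia sets and no a priori bounds, and the paper stresses precisely this point when discussing Levin's work (\cite{Lev09}, \cite{Lev11}, \cite{Lev14}) and the Cheraghi--Shishikura near-parabolic theorem \cite{CS}. A priori bounds in the Sullivan sense are sufficient but not necessary: Levin proves rigidity (and local connectivity of $M$) for certain satellite parameters whose small Julia sets do \emph{not} shrink, so complex bounds fail, by quite different means (multiplier extension plus a strengthened Pommerenke--Levin--Yoccoz inequality); and Cheraghi--Shishikura obtain shrinking of the sets $M_n(\ZZ)$ via a compact invariant class for near-parabolic renormalization, which is a separate compactness mechanism, not classical polynomial-like a priori bounds. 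You gesture at this in the sentence about \lq\lq interleaving\rq\rq\ polynomial-like and near-parabolic renormalization, but then contradict it by collapsing the open problem to universal a priori bounds, which cannot work. The honest residual statement is rather: establish combinatorial rigidity (equivalently triviality of fibers) for \emph{all} infinitely renormalizable parameters, using a priori bounds where they hold and alternative mechanisms --- near-parabolic renormalization, Levin-type multiplier estimates, or something not yet invented --- where they do not; no single analytic input currently covers the unbounded-combinatorics mixed primitive/satellite case.

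Two smaller points. First, the transfer from dynamical puzzle moduli to para-puzzle moduli is not literally an equality of moduli; it is a quasiconformal comparability coming from the holomorphic motion of the puzzle, so \lq\lq have the same moduli\rq\rq\ should be \lq\lq have comparable moduli.\rq\rq\ Second, the open--closed argument of Section~\ref{Fiber Section} does not by itself convert a priori bounds into triviality of fibers: one still needs the pullback construction of a Thurston conjugacy and the Sullivan--McMullen rigidity step to upgrade combinatorial equivalence to quasiconformal conjugacy before openness/closedness can be invoked. As a reduction of MLC to recognized sub-problems your proposal is in the right ballpark, but the claim that \lq\lq a priori bounds for all infinitely renormalizable quadratics\rq\rq\ is the missing statement is incorrect and should be replaced as above.
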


  Yoccoz \cite{Hu} proved in the Nineties that the Mandelbrot set is locally connected at many parameters, including  every parameter that is at most finitely renormalizable, and that the corresponding Julia sets are locally connected as well (see Section~\ref{Renormalization} for a definition of renormalization and for the statement of Yoccoz's Theorem). Subsequent works by  Kahn and Lyubich   proved local connectivity also for several classes of infinitely renormalizable parameters (\cite{Kah06}, \cite{KL08}, \cite{KL09}).

Local connectivity of Julia sets of polynomials and of the Multibrot sets is related to landing properties of dynamic and parameter rays (and hence, to the dynamics of the map under consideration) through the following classical theorem (see e.g. Theorem 17.14 in    \cite{Mi}).

\begin{thm}[Carath\'eodory-Torhost's Theorem] Let  $X\subset \Chat$ be a  compact connected full set and let  $\phi$ be the Riemann map from $\C\setminus\D\ra\C\setminus X$. Then $\psi$ extends as a homeomorphism to $\ov{\D}$ if and only if $\partial X $ is locally connected.
\end{thm}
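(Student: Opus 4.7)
The plan is to prove the two implications separately.

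\textbf{Continuous extension $\Rightarrow$ local connectivity.} Suppose $\phi$ extends to a continuous map $\bar\phi$ on the closed exterior $\overline{\C\setminus\D}$. The unit circle $\partial\D$ is compact and locally connected, and the continuous image in a Hausdorff space of a compact locally connected metric space is again locally connected (Hahn-Mazurkiewicz, applied after realising $\partial\D$ as a continuous image of $[0,1]$). It therefore suffices to check that $\bar\phi(\partial\D)=\partial X$: the inclusion $\subseteq$ follows from $\phi$ being a biholomorphism between the interiors, and surjectivity holds because given $y\in\partial X$ and $w_n\in\C\setminus X$ with $w_n\to y$, the preimages $z_n:=\phi^{-1}(w_n)$ must accumulate on $\partial\D$, and for any accumulation point $\zeta$ we have $\bar\phi(\zeta)=y$.

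\textbf{Local connectivity $\Rightarrow$ continuous extension.} Assume $\partial X$ is locally connected and fix $\zeta_0\in\partial\D$. After a Möbius normalization we may take $X$ bounded, so $\operatorname{Area}(\C\setminus X)<\infty$. For small $\rho>0$ consider the circular crosscut $C_\rho:=\{z\in\C\setminus\overline\D:|z-\zeta_0|=\rho\}$. The length-area inequality, applied in polar coordinates centered at $\zeta_0$, yields
\[
\int_0^R \ell(\phi(C_\rho))^2\,\frac{d\rho}{\rho}\ \leq\ 2\pi\operatorname{Area}(\C\setminus X)\ <\ \infty,
\]
so there is a sequence $\rho_n\downarrow 0$ with $\ell(\phi(C_{\rho_n}))\to 0$. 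Each arc $\phi(C_{\rho_n})$ then has finite length and hence two well-defined endpoints $y_n^\pm\in\partial X$ satisfying $|y_n^+-y_n^-|\to 0$. Local connectivity of $\partial X$ produces connected subsets $K_n\subset\partial X$ joining $y_n^+$ to $y_n^-$ with $\operatorname{diam}(K_n)\to 0$. The loop $\phi(C_{\rho_n})\cup K_n$ bounds a region in $\hat\C\setminus X$ which, by a connectedness argument, is exactly the image under $\phi$ of the preimage half-disk $V_n:=\{|z-\zeta_0|<\rho_n\}\cap(\C\setminus\overline\D)$; hence $\operatorname{diam}\phi(V_n)\to 0$, giving a well-defined limit of $\phi$ at $\zeta_0$.

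\textbf{Main obstacle.} The delicate step is identifying the correct enclosed region: we must rule out the configuration in which $\phi(C_{\rho_n})\cup K_n$ bounds the ``large'' complementary component of $\hat\C\setminus X$. This uses properness of $\phi$ near $\partial\D$ together with the fact that the preimage half-disks $V_n$ shrink onto $\zeta_0$ and are strictly contained in a definite piece of the exterior. A second technical task is checking that the crosscut estimates are uniform in $\zeta_0\in\partial\D$, so that the pointwise radial limits assemble into a genuinely continuous extension of $\phi$, rather than merely an everywhere defined function.
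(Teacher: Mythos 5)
The paper does not actually prove this statement; it cites it as classical (Milnor, Thm.\ 17.14 in \cite{Mi}), so there is no in-paper argument to compare against. Your sketch follows the standard Carath\'eodory length--area approach, which is indeed the textbook route. Two remarks.

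First, a genuine error in the ``easy'' looking step: after a M\"obius normalization making $X\subset\C$ bounded, the complement $\C\setminus X$ is \emph{unbounded}, so $\operatorname{Area}(\C\setminus X)=\infty$ and your displayed length--area inequality as written has an infinite right-hand side. The correct version of the estimate restricts $\phi$ to a bounded half-annulus $A_R:=\{z\in\C\setminus\overline\D : |z-\zeta_0|<R\}$ for a small fixed $R$; since $\phi$ is proper and sends $\infty$ to $\infty$, the image $\phi(A_R)$ is a bounded subset of $\C\setminus X$, and it is $\operatorname{Area}(\phi(A_R))<\infty$ that feeds into the inequality
\[
\int_0^R \ell(\phi(C_\rho))^2\,\frac{d\rho}{\rho}\ \le\ 2\pi\operatorname{Area}\bigl(\phi(A_R)\bigr),
\]
which then yields the desired $\rho_n\downarrow 0$ with $\ell(\phi(C_{\rho_n}))\to 0$. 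The rest of your argument proceeds unchanged once this is fixed.

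Second, on the obstacles you flag: the ``which complementary region'' issue is real and is precisely where the proof earns its keep (the small Jordan-type region bounded by $\phi(C_{\rho_n})\cup K_n$ must be shown to coincide with $\phi(V_n)$, which one gets from properness of $\phi$ plus the fact that $\phi(V_n)$ clusters only on the small arc of $\partial X$). However, your second worry---that pointwise limits might not assemble into a continuous extension---is not actually an additional difficulty: what the argument produces is $\operatorname{diam}\phi(V_n(\zeta_0))\to 0$ for every $\zeta_0\in\partial\D$, and this oscillation estimate, not just existence of radial limits, directly gives continuity of the extension without any separate uniformity step. Finally, note that the statement as printed in the paper contains a slip: the map extends \emph{continuously} (not as a homeomorphism) exactly when $\partial X$ is locally connected; a homeomorphic extension would characterize $\partial X$ being a Jordan curve. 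You correctly proved the continuous-extension version.
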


{So, all parameter rays lands (with the landing point depending continuously on the angle) if and only if $M$ is locally connected. Similarly, given a polynomial $P$, all dynamic rays in the dynamical plane  of $P$ land (and the landing point depends continuously on the angle) if and only if $J(P) $ is locally connected. See also Chapter 18 in \cite{Mi} for more on local connectivity versus landing of rays.}

Rays which land together  in the dynamical plane for a polynomial  $P$ (respectively  in the parameter plane of a family of unicritical polynomials) induce an equivalence relation  on $\partial \D$, defined by $\theta\sim\theta'$ if and only if the dynamic (respectively parameter) rays of angle $\theta$ and of angle $\theta'$ land together at the same point in the Julia set of $P$ (respectively, on the boundary of the corresponding Multibrot set).  Equivalent points on $\partial\D$ can then be connected by geodesics in $\D$, and some work can be done in order to construct a closed lamination (which, in the dynamical case, is also endowed with a natural dynamics on the leaves).    Using this and assuming local connectivity, one can build  abstract models for $M$ (respectively for a Julia set $J$) which, under the assumption of local connectivity,  are   homeomorphic to the Julia set (respectively to the Multibrot set), and check properties (like for example   density of hyperbolicity) in the abstract model. Following this strategy,   Douady in  \cite{DH84} constructed a topological model of the Mandelbrot set to show  that MLC implies density of hyperbolicity  (see also the proof in \cite{Sc04}).  
Two general strategies to construct topological models for connected, full, locally connected  compact sets have been introduced by Douady \cite{Do}. See also  the approach to topological models of the Mandelbrot set which uses Thurston's minor laminations \cite{Th85}. 
 Recent work has been done on constructing topological models for Julia sets of  cubic polynomials \cite{BOPT} and rational maps \cite{Ro}.  

In Section~\ref{Fiber Section}  we will sketch a different proof of the fact that MLC implies density of Hyperbolicity, following \cite{RS08} (see also \cite{Sc04}). Indeed one can  show that MLC is equivalent to Triviality of Fibers (or, equivalently, Combinatorial Rigidity) and that the latter implies Density of Hyperbolicity.

\subsection*{Locally and Non Locally Connected Julia Sets for Quadratic Polynomials}
It is known that many Julia sets of quadratic polynomials are locally connected, for example  
 the Julia sets of real quadratic polynomials, finitely renormalizable quadratic polynomials with no indifferent periodic points, the Julia set corresponding to the Feigenbaum parameter $c_F$, etc (see \cite{LvS}, \cite{Lyu97}, \cite{KvS09}, \cite{Pe96},\cite{PZ}, \cite{Hu}). See \cite{KL09b} for results on local connectivity of Julia sets for unicritical polynomials of degree $d>2$, and \cite{KL09}  for local connectivity of Julia sets of  finitely renormalizable unicritical polynomials with all periodic points repelling. See \cite{Ro} for results about local connectivity of Julia sets for rational maps obtained via Newton's method. 

  However,   Julia sets can be non-locally connected   even for quadratic polynomials.   The first examples of non-locally connected
Julia sets was given by Sullivan and Lyubich by showing that every polynomial with a Cremer point has a non-locally connected Julia set (\cite{Mi}, Theorem 18.5).

{One can see that connected Julia sets of unicritical polynomials are locally connected if and only if they contain no wandering continua,  and if and only if the Yoccoz puzzle pieces shrink to points \cite{Lev98}; see Section~\ref{Renormalization} for the definition of Yoccoz puzzle}. 

In Yoccoz's paper and in many of the subsequent work in the field, local connectivity of Julia sets is often a preliminary step in proving local connectivity of the Mandelbrot set at a given parameter. One uses shrinking of puzzle pieces in the dynamical plane- where the dynamics helps in getting control on the size of puzzle pieces- to show shrinking of parapuzzle pieces in the parameter plane (see Section~\ref{Renormalization}). However the two things are not   as necessarily related as one may expect. {For example, for   the Feigenbaum parameter $c_F$ one knows that the  Julia set 
 is locally connected, but it is not known whether the Mandelbrot set itself is locally connected at $c_F$. } 
 On the other hand, using parabolic bifurcations it is possible to construct  (\cite{Mi00}, \cite{So00}) non-locally connected Julia sets which correspond to infinitely renormalizable parameters. {The parameters of such Julia sets are constructed as limits of parabolic bifurcations whose periods  tend to infinity quickly enough. As observed in \cite{Lev11}, using the bound on the size of limbs of the Mandelbrot set following from the Pommerenke-Levin-Yoccoz inequality, one can  choose the sequence of parabolic bifurcations in such a way  that the Mandelbrot set is locally connected at the limiting parameter.}

 Levin (\cite[Theorem 2]{Lev11},\cite{Lev14}, \cite{Lev09})  explores conditions under which the Mandlebrot set is locally connected at parameters obtained from the construction in \cite{Mi00}, by making the construction  quantitative.  A very interesting feature of the work by Levin is that it does not use the complex bounds introduced by Sullivan \cite{Su88}, since the latter do not hold in general for the  class of examples that he is considering. This is essentially due to the fact that the small Julia sets do not shrink to points (see Section~\ref{Renormalization} and \cite{Lev11} for more on the relation between complex bounds and small Julia sets), which implies lack of complex bounds.  Levin's approach is based on an extension result for the multiplier of a periodic orbit  beyond the domain where it is attracting, and on a strenghtening of the Pommerenke-Levin-Yoccoz inequality in \cite{Lev09}.

\begin{rem} 
Local connectivity does not seem to be the right notion to look at when considering Julia sets or parameter spaces of transcendental entire maps.  Indeed, the bifurcation locus  for the exponential family is not locally connected (see Theorem 5 in \cite{RS08}, which follows a strategy used  by Devaney to show that $e^z$ is not structurally stable). However, hyperbolic maps are still expected to be dense. Also, when the Julia set is not the entire plane, 
in many cases there are curves in the Julia set which form so called \emph{Cantor bouquets} which are not locally connected. However, topological models (called \emph{pinched Cantor bouquets})do make sense at least for hyperbolic maps \cite{Mi12}.
\end{rem}

\section{Topological rigidity and Quasiconformal rigidity} \label{Topological rigidity}
We say that two maps in the same family (as defined in Section~\ref{Preliminaries}) belong to the same {\emph topological, quasiconformal or combinatorial class} if they are in a connected subset of the parameter space in which all maps are topologically conjuigate, quasiconformally conjugate, or combinatorially equivalent as defined in Section~\ref{Combinatorial rigidity section}.
The philosophy beyond the concept of rigidity is to show that topological classes, quasiconformal classes, or combinatorial classe are in fact singletons, unless they consist of hyperbolic maps. This concept is stemming from the observation that parameters in a connected component of the structurally stable set (that is, a hyperbolic or non-hyperbolic component) clearly belong to the same topological, quasiconformal and combinatorial class.
 In this section we deal with topological and quasiconformal classes, while combinatorial classes are treated in Section~\ref{Combinatorial rigidity section}.
Unless explicitly stated otherwise, $\FF$ will denote any family as in Section~\ref{Preliminaries}.
\begin{defn}
We say that a map $f$ in a family $\FF$ is \emph{topologically rigid}   if it is not topologically  conjugate to any other map in a neighborhood of $f$ inside $\FF$.
\end{defn}

For a statement of the following conjecture for the family of quadratic polynomials see  \cite{Lyu97}.
\begin{Conjecture}[Topological Rigidity Conjecture] Let $f\in\FF$ be non-hyperbolic.  Then  $f$ is topologically rigid, that is, not topologically conjugate to any other $g\in\FF$ with $g$ in a sufficiently small neighborhood of $f$.
\end{Conjecture}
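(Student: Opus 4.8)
Since the conjecture is not known in general, the realistic plan is a chain of reductions to the other conjectures of the survey, supplemented by a concrete argument in the range where the machinery is available; the two routes below both bottom out at something equivalent to MLC.

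\emph{A combinatorial route.} A topological conjugacy $h$ between $f$ and a nearby $g\in\FF$ preserves periods and rotation numbers of cycles, and since the landing patterns of (pre)periodic dynamic rays are dynamically determined (Theorem~\ref{Douady Hubbard}), it forces $f$ and $g$ to be combinatorially equivalent in the sense of Section~\ref{Combinatorial rigidity section}. One then invokes Combinatorial Rigidity --- equivalently Triviality of Fibers, equivalently MLC for unicritical polynomials (Section~\ref{Fiber Section}) --- to conclude $f=g$. The first implication is soft; the second is the crux.

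\emph{An analytic route.} First one promotes the topological conjugacy to a quasiconformal one. On the Fatou set of $f$ a conformal-on-each-component conjugacy is built from B\"ottcher, Fatou or linearizing coordinates (Thurston's pullback argument, combined with quasiconformal surgery); on $J(f)$ one uses that $g$ is close to $f$, so that the $\lambda$-lemma (Lemma~\ref{Lambda Lemma}) applied to the holomorphic motion of $J(f)$ over a small parameter neighborhood yields a quasiconformal conjugacy. Gluing produces a global quasiconformal $h$ with $h\circ f=g\circ h$. Let $\mu=\bar\partial h/\partial h$; holomorphicity of $f$ and $g$ makes $\mu$ an $f$-invariant Beltrami differential, and after the adjustments above $\mu$ vanishes a.e.\ off $J(f)$, so if $h$ is not conformal then $\mu$ determines an $f$-invariant line field on a positive-measure subset of $J(f)$. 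Applying the No Invariant Line Fields Conjecture (Section~\ref{The No Invariant Line Fields Conjecture (NILF)}) forces $\mu\equiv 0$; then $h$ is conformal, hence affine, hence $g=f$ once the family is normalized.

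\emph{The main obstacle.} Both routes reduce the statement to MLC / NILF / Density of Hyperbolicity, which remain open, so the conjecture as stated cannot presently be proved; the honest deliverable is this reduction plus the state of the art. In practice one restricts the regime: for non-hyperbolic, at most finitely renormalizable parameters the combinatorial route is made effective by Yoccoz's theory --- shrinking of puzzle pieces (equivalently trivial fibers and absence of wandering continua, Section~\ref{Renormalization}) produces the conjugacy explicitly and rules out a nontrivial $\mu$ --- while the infinitely renormalizable case relies on complex (\emph{a priori}) bounds in the sense of Sullivan and is known only in part.
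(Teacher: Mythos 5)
This is a conjecture, not a theorem; the paper offers no proof of it, and your proposal rightly does not claim to have one either. Your two reduction routes are in the spirit of the survey's implication table, but each has a genuine weak point that the paper itself flags, implicitly or explicitly.

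\emph{Analytic route.} The $\lambda$-lemma step only applies where the Julia set moves holomorphically, i.e., at $J$-stable parameters. If $f$ lies in the bifurcation locus (e.g.\ on $\partial M$), which is the harder half of the non-hyperbolic case, there is no holomorphic motion of $J(f)$ over a neighborhood of $f$, so you cannot promote the topological conjugacy to a quasiconformal one this way. If $f$ is instead in a non-hyperbolic component, Remark~\ref{Non-hyperbolic qc classes} already supplies the quasiconformal conjugacy and Theorem~\ref{No ILF vs Density of Hyperbolicity} produces the invariant line field, so NILF does rule out that case --- but this only gives Density of Hyperbolicity. The paper explicitly says Topological Rigidity is ``formally slightly stronger'' than Density of Hyperbolicity, precisely because of the bifurcation-locus case your analytic route cannot reach: a priori a non-$J$-stable $f$ could be topologically conjugate to \emph{some} nearby $g$ without being conjugate to \emph{all} nearby ones, and NILF says nothing about that.

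\emph{Combinatorial route.} Conjecture~\ref{Combinatorial Rigidity Conjecture} is stated only for parameters with all cycles repelling. A non-hyperbolic $f$ may have a parabolic, Siegel or Cremer cycle, and for those your reduction does not apply as stated. Those cases must be handled separately (for unicritical polynomials one can lean on the Pommerenke--Levin--Yoccoz inequality to get local connectivity of $M_d$ at boundaries of hyperbolic components; for exponentials no analogue is known, which the paper points out). So the clean implication you want, ``topological conjugacy $\Rightarrow$ same combinatorics $\Rightarrow$ same parameter,'' needs the hypothesis on cycles, and the residual non-repelling cases form a real (if combinatorially small) gap. In short: the reductions are the right idea and match how the paper thinks about the hierarchy, but neither route as written delivers Topological Rigidity even assuming MLC/NILF; the missing ingredient is precisely the ``slightly stronger'' part of the conjecture that the paper singles out.
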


Since functions in the structurally stable locus are topologically conjugate to all functions in a neighborhood, but a priori a function could be topologically conjugate to some functions in a neighborhood and not to all of them,  the Topological Rigidity Conjecture is formally slightly stronger than the Density of Hyperbolicity Conjecture. 

To define quasiconformal classes we first define quasiconformal maps.  
There are several equivalent definitions (\cite[Section 2.6]{McM}, \cite{BrFa}).

\begin{defn} An orientation-preserving homeomorphism $\psi:\C\ra\C$ 
$\psi$ is K- quasiconformal if and only if it admits partial distributional  derivatives in $L^1_{\text{loc}}$ satisfying 
\[\frac{\ov{\partial}_z \psi}{{\partial}_z \psi}=:\mu(z)\frac{d\ov{z}}{dz}\]
with $\|\mu\|_\infty\leq\frac{K-1}{K+1}$. 
The form $\mu(z)\frac{d\ov{z}}{dz}$  is  a \emph{Beltrami differential}.
\end{defn}
More visually, $\psi$  is K-\emph{quasiconformal} if there exists $K\geq1$ such that for any annulus $A\subset \C$ 
\[\frac{1}{K}\mod A\leq \mod \psi(A)\leq K\mod A.\]
If $K$ can be taken to be 1, then $\psi$ is conformal.
 The geometric interpretation of $\mu$ (see \cite{BrFa} for a very clear explanation) is that it defines a field of ellipses on the tangent space (which in this case can be identified with $\C$ itself) where the ratio between the major and minor axis is bounded by $K$ and the direction of the major axis depends only on  $\arg(\mu)$. This field of ellipses is called an \emph{almost complex structure} (induced by $\psi$). The \emph{standard complex structure} $\sigma_0$ is when almost every ellipse is in fact a circle, or equivalently $\mu=0$ almost everywhere. 
 
  Weyl's Lemma ensures that if for a quasiconformal map $\psi$ we have that  $\ov{\partial}_z\psi=0$ almost everywhere 
    then $\psi$ is conformal. 
  
  The Measurable Riemann Mapping Theorem (see Section 1.4 in \cite{BrFa}) states that given any Beltrami differential $\mu$ there exists a quasiconformal map $\phi$ which integrates $\mu$, that is, $\phi_*\mu=\sigma_0$ the standard conformal structure, and  $\phi$ is unique up to postcomposition with a conformal map. 
Moreover, if one considers a family of Beltrami differential $\mu_t$ depending continuously (or holomorphically) on a parameter $t$ one obtains a family of quasiconformal maps depending continuously (or holomorphically) on $t$.}

\begin{defn}
We say that a map $f\in\FF$ is \emph{quasiconformally rigid} if it is not quasiconformally   conjugate to any other map in a neighborhood of $f$ inside $\FF$.
\end{defn}

\begin{Conjecture}[Quasiconformal Rigidity Conjecture]Let $f\in\FF$ be non-hyperbolic.  Then  $f$ is quasiconformally rigid, that is, not quasiconformally  conjugate to any other $g\in\FF$ with $g$ in a sufficiently small neighborhood of $f$.
\end{Conjecture}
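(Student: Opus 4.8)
The plan is to reduce the Quasiconformal Rigidity Conjecture to the No Invariant Line Fields Conjecture of Section~\ref{The No Invariant Line Fields Conjecture (NILF)} --- equivalently, by the discussion there following \cite{McM}, to the Density of Hyperbolicity Conjecture --- and, in the one-dimensional families, to the Triviality of Fibers package of Section~\ref{Fiber Section}. So suppose $f\in\FF$ is non-hyperbolic and that some $g\in\FF$ in a punctured neighbourhood of $f$ is quasiconformally conjugate to $f$ by $\phi\colon\C\to\C$ with $\phi\circ f=g\circ\phi$. Then the Beltrami differential $\mu:=\bar\partial\phi/\partial\phi$ is $f$-invariant, $f^*\mu=\mu$ a.e. The target is to show $\mu\equiv 0$: by Weyl's Lemma $\phi$ is then conformal, hence affine (or M\"obius, for $\FF=\Rat_d$), and a conformal conjugacy between two members of the families of Section~\ref{Families} forces $f$ and $g$ to represent the same point of the parameter space, contradicting $g\neq f$. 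Thus everything reduces to showing that a non-hyperbolic $f$ admits no genuinely nontrivial invariant Beltrami differential.

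First I would dispose of the Fatou set. The families of Section~\ref{Families} have no wandering domains (\cite{EL},\cite{GK} in the transcendental case; classically otherwise), so the Fatou components of $f$ are (super)attracting and parabolic basins and rotation domains. On basins one passes to B\"ottcher, Koenigs or Fatou coordinates, in which $\phi$ is conformal apart from the multipliers of the attracting cycles; on Siegel disks $\phi$ is conformal in the linearizing coordinate. A deformation genuinely changing an attracting multiplier, or moving $f$ inside a component of the structurally stable locus, would have to preserve the (unstable) Julia dynamics of $f$ --- possible only if $f$ lies in a queer component. Ruling out queer components is exactly Density of Hyperbolicity, hence NILF. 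Granting that, $\mu$ can be renormalized so as to be supported on $J(f)$.

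Now consider the Julia set. If $J(f)$ has zero Lebesgue measure, then $\mu=0$ a.e.\ and we are done as above. If $J(f)$ has positive measure, then $\mu$ is a nonzero $f$-invariant line field on a positive-measure subset of $J(f)$, which is precisely what the No Invariant Line Fields Conjecture forbids. In the one-dimensional families (unicritical polynomials and the exponential family) I would instead proceed combinatorially: a quasiconformal conjugacy is in particular topological, so $f$ and $g$ carry the same ray portraits and the same Yoccoz puzzle partition (Section~\ref{Renormalization}), hence lie in a common combinatorial class, i.e.\ in a common fiber in the sense of Section~\ref{Fiber Section}; Triviality of Fibers (equivalently Combinatorial Rigidity, equivalently MLC) then collapses the quasiconformal class of $f$ to the single point $f$, and simultaneously delivers Topological Rigidity.

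The hard part is that this whole reduction is conditional on NILF and MLC, which are themselves the central open problems: the argument essentially shows that Quasiconformal Rigidity is sandwiched between those conjectures and Density of Hyperbolicity. Even setting that aside, the positive-area case of the third paragraph is genuine --- Buff and Ch\'eritat constructed quadratic polynomials with Julia sets of positive area --- so it cannot simply be assumed away; and in the infinitely renormalizable regime there are no \emph{a priori} (complex) bounds in general, as in the Levin examples discussed above, so the puzzle pieces need not shrink to points and the combinatorial route does not apply directly. Handling those two situations is exactly where the difficulty of MLC itself is concentrated.
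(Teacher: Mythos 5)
The statement you were given is one of the open conjectures the survey is cataloguing, not a theorem, so neither the paper nor your proposal actually proves it; you are right to flag that your argument is conditional, and as a reduction to NILF/Density of Hyperbolicity it is directionally consistent with the arrow ``Density of Hyperbolicity $\Longrightarrow$ QC Rigidity'' in the paper's closing table. But you overlook the clean, \emph{unconditional} piece the paper records in the Remark immediately following the conjecture: since $J$-stable parameters are dense (Theorem~\ref{Density of J-Stability}) the bifurcation locus has empty interior, and since by the Measurable Riemann Mapping Theorem a quasiconformal class is either open or a singleton, every parameter in the bifurcation locus is \emph{already} quasiconformally rigid --- no Beltrami-differential bookkeeping, no pass through B\"ottcher/Koenigs/Fatou coordinates, no renormalizing the support to $J(f)$. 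The only case left open is a non-hyperbolic $f$ lying in a queer component, and by Remark~\ref{Non-hyperbolic qc classes} such an $f$ would \emph{fail} to be quasiconformally rigid; so the conjecture is in fact equivalent to Density of Hyperbolicity, not merely sandwiched under it, and your proposal never isolates this simple dichotomy.

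Your Fatou-set paragraph is also off for the one-dimensional families at the heart of the survey. A non-hyperbolic unicritical polynomial or exponential map has no attracting cycles whatsoever: an attracting cycle would capture the unique free singular value and force hyperbolicity. So the ``changing the attracting multiplier'' deformation you worry about cannot arise for a non-hyperbolic $f$ in those families; and the Siegel/Cremer/parabolic parameters you would need to handle lie on the boundary of hyperbolic components, hence in the bifurcation locus, hence are already disposed of by the MRMT remark. Your combinatorial route via fibers is fine in spirit, but it again only re-expresses the open problem (MLC $\Leftrightarrow$ Triviality of Fibers $\Rightarrow$ Density of Hyperbolicity) rather than making progress on it, as you yourself note.
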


\begin{rem} Since $J$-stable parameters are dense in the families that we are considering, the bifurcation locus has empty interior. By the Measurable Riemann Mapping Theorem  
 quasiconformal  classes are either open or  singletons, so quasiconformal classes for parameters in the bifurcation locus are singletons.
\end{rem}
 As one would expect, quasiconformal classes of hyperbolic parameters are not singletons, and  coincide with hyperbolic components minus the centers.
%
Recall that for functions with finitely many singular values a \emph{hyperbolic component} is a maximal  connected component of the parameter space in which all the singular values belong to attracting basins.  
\begin{prop}[Quasiconformal classes of hyperbolic parameters] Consider a hyperbolic component for a family $\FF$ as in Section~\ref{Preliminaries}. If two parameters  belong to the same hyperbolic component $W$ and neither of them is   superattracting, they are quasi-conformally conjugated. 
\end{prop}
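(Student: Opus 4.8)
The plan is to exhibit, for any two non-superattracting parameters $\lambda_0,\lambda_1$ in a hyperbolic component $W$, a quasiconformal conjugacy between $f_{\lambda_0}$ and $f_{\lambda_1}$, and the natural way to produce such a conjugacy is via a holomorphic motion of the whole sphere. First I would use the characterization of $J$-stability from Theorem~\ref{Equivalent notions of J-stability}: since $W$ consists of hyperbolic (hence $J$-stable) parameters, the Julia set moves holomorphically over $W$, so there is a holomorphic motion $\phi_\lambda\colon J(f_{\lambda_0})\to J(f_\lambda)$ conjugating $f_{\lambda_0}$ to $f_\lambda$ on Julia sets, with $\phi_{\lambda_0}=\id$. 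By the $\lambda$-lemma (Lemma~\ref{Lambda Lemma}) and Slodkowski's extension, this motion extends to a holomorphic motion of all of $\hat\C$; in particular each $\phi_\lambda$ is a quasiconformal homeomorphism of the sphere, and it is already a conjugacy on $J(f_{\lambda_0})$.

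The remaining work is to upgrade $\phi_\lambda$ (or replace it) so that it becomes a genuine conjugacy on the Fatou set as well. On the Fatou side, a hyperbolic map has finitely many cycles of attracting basins, and every Fatou component is an iterated preimage of a component carrying an attracting periodic point. For each cycle of immediate basins, pick the holomorphically-varying attracting cycle (which exists and stays attracting, with multiplier $0<|\rho(\lambda)|<1$ since $\lambda$ is not superattracting), and use the linearizing (Koenigs) coordinates: these depend holomorphically on $\lambda$ and give a quasiconformal identification between corresponding immediate basins, interpolating between $\phi_\lambda$ on the boundary and the dynamics in the interior. Pull this identification back by the dynamics through all the preimage components. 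Concretely, one builds a $\lambda$-dependent Beltrami coefficient $\mu_\lambda$ on the dynamical plane of $f_{\lambda_0}$ that is $f_{\lambda_0}$-invariant — standard (zero) on $J(f_{\lambda_0})$ and on the grand orbit of the linearizable regions, and defined on a fundamental annulus of each immediate basin by pulling back the standard structure under the Koenigs coordinate — and then integrates it by the Measurable Riemann Mapping Theorem to obtain a quasiconformal map whose composition with $f_{\lambda_0}$ and with the integrating map for the pushed-forward structure realizes $f_\lambda$. Because $\mu_\lambda$ varies holomorphically in $\lambda$, the integrating maps do too, and matching them with $\phi_\lambda$ on $J(f_{\lambda_0})$ yields the desired quasiconformal conjugacy $h_\lambda\colon f_{\lambda_0}\to f_\lambda$. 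Applying this at $\lambda=\lambda_1$ finishes the proof, noting that $W$ being connected lets us reduce to $\lambda_1$ close to $\lambda_0$ if desired and then chain the conjugacies, or simply work over all of $W$ at once since the motion is global.

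The main obstacle is the transcendental case: for $f_\lambda\in M_g$ the Fatou set and the combinatorics of its components are more delicate (infinitely many components can accumulate at infinity, and the essential singularity must be handled), so one must know that hyperbolic maps in class $\SS$ have no wandering domains and only attracting basins among their Fatou components — this is exactly the content of the results of Eremenko–Lyubich and Goldberg–Keen cited earlier — and that the holomorphic motion of Julia sets together with the Koenigs coordinates still patch together to an invariant Beltrami field of bounded dilatation near infinity. Modulo invoking those structural results, the argument is the same quasiconformal-surgery/holomorphic-motion scheme used for rational maps in \cite{McM}; I would present it uniformly, pointing to \cite{McM} for the rational case and to \cite{EL},\cite{GK} for the facts needed to run it in class $\SS$.
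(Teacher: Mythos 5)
Your argument is correct in outline, but it takes a genuinely different route than the paper's on the Fatou set. The paper constructs a single holomorphic motion of the \emph{entire} plane: first a holomorphic motion of the repelling periodic points (continuable without collision because there are no indifferent cycles in $W$), extended to $J$ by the $\lambda$-lemma, and then extended over the Fatou set by pushing forward the linearizing (Koenigs) coordinates and, for polynomials, the Boettcher coordinate at infinity through the dynamics via pullbacks. Since these pieces assemble into one holomorphic motion of $\C$, the $\lambda$-lemma does all the work: the resulting map is automatically continuous, quasiconformal, and (by construction) a conjugacy. You instead obtain $\phi_\lambda$ on $J$ and then run a quasiconformal surgery on the Fatou side: build an $f_{\lambda_0}$-invariant Beltrami coefficient from the Koenigs charts, integrate it by the Measurable Riemann Mapping Theorem, and then ``match'' with $\phi_\lambda$ on $J$. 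This is standard and works, but it places the technical burden on the matching/glueing step at $\partial(\text{basin})$, which you leave implicit; the paper's approach avoids this entirely because the $\lambda$-lemma already guarantees the global extension is continuous across Julia and Fatou. Also, invoking Slodkowski is unnecessary overkill here — you do not use the Slodkowski extension on the Fatou set (you replace it there), and the ordinary $\lambda$-lemma already gives quasiconformality of the extension to $\ov{J}$. What your surgery route buys is independence from connectedness/simple-connectedness considerations for $W$ (the paper uses simple connectivity of $W$ to patch local motions into a global one); what the paper's route buys is a cleaner, uniform mechanism with no glueing to verify.

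Two smaller points. First, you should say explicitly that the non-superattracting hypothesis is used precisely so that the multiplier of each attracting cycle is nonzero, hence Koenigs coordinates (rather than Boettcher coordinates) exist and vary holomorphically; that is the only place the hypothesis enters. Second, in the transcendental case it is not just ``no wandering domains'' that is needed but also the absence of Baker domains for functions in class $\SS$, so that every Fatou component is eventually mapped to an immediate attracting basin — the paper states this explicitly and you should too, since it is what justifies that the grand orbit of the immediate basins together with $J$ exhausts $\C$.
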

\begin{proof}[Sketch of proof following Theorem 4.12 in \cite{Lyu99}] Recall first 
that transcendental maps with finitely many singular values do not have wandering and Baker domains, so that  $\C$ is the union of the Julia set,  the finitely many immediate attracting basins, and the preimages of the latter. For polynomials, one also needs to add the superattracting basin of infinity.
 
Since parameters in hyperbolic components have no indifferent cycles, all of their periodic points can be continued analytically and without colliding  over the entire hyperbolic component, giving a holomorphic motion of the set of repelling periodic points which respects the dynamics. This motion can be extended to the entire Julia set using Ma\~n\'e-Sad-Sullivan's $\lambda$-lemma and using the fact that the Julia set is the closure of repelling periodic points.
 Using linearizing coordinates (and pullbacks), for each  immediate  basin of attraction one can  construct a  holomorphic motion of the basin of attraction  which is a conjugacy. For polynomials,  one also needs to construct a  holomorphic motion of the basin of attraction of infinity  using Boettcher's Theorem, which again is a conjugacy by construction. In this way we constructed a holomorphic motion of the entire complex plane over a simply connected open set containing the two parameters, which is automatically continuous -and in fact  quasiconformal- by the $\lambda$-lemma.   When hyperbolic components are known to be simply connected as in the exponential and unicritical polynomial case, the holomorphic motions obtained when varying the starting parameters can be patched together to obtain a  holomorphic motion over the entire hyperbolic component  minus the center.
\end{proof}

\begin{rem}\label{Non-hyperbolic qc classes}
With a similar proof, one can show that for unicritical polynomials and for   exponentials any two  parameters in a non-hyperbolic component are quasiconformally conjugate.  See also the proof of Theorem 4.9 and Corollary 4.10 in \cite{McM}. 
\end{rem}

The analogue of quasiconformal rigidity for real one-dimensional families of analytic maps is \emph{quasi-symmetric rigidity}.  Substantial work in the direction of quasisymmetric rigidity, as well as an exhaustive introduction to the previous state of the art, can be  found  in \cite{CvS}. 
 
\subsection*{Thurston's Pullback Argument to construct quasiconformal conjugacies}
A quite standard technique which is used to show rigidity is to construct quasiconformal conjugacies between combinatorially equivalent maps by using a pull-back argument, and then using an open-closed argument (see Section~\ref{Combinatorial rigidity section} and  \cite{Che10}) to deduce that combinatorial classes consist of a single point. 
{The Pullback Argument  was probably introduced by Thurston for postcritically finite maps, and has been used several times since then (see among others  \cite[Paragraph 11]{Su88}, \cite{Lyu97} and  \cite{Che10}, \cite{KSvS07} for polynomials, \cite{Cui01} for rational maps, \cite{Be15} for exponential maps.} In order to be able to take pullbacks, one needs some information on the behaviour of the postsingular set- ideally, the singular value is non-recurrent, but one can also deal with weak forms of recurrence. 

This  is a sketch of how  to construct a quasiconformal conjugacy using Thurston's method. 
Consider two  maps which are similar in some sense, for example, they have the same combinatorics (see as usual Section~\ref{Fiber Section}).
 In this case it may be possible to 
construct a  topological conjugacy on $\C$, and the goal is to upgrade this topological conjugacy to a quasiconformal conjugacy. One   can use pull backs to sacrifice the topological conjugacy in some areas (here, some control on the postsingular set is needed, and the 'similarity' of the two maps is used) to obtain a sequence of maps $\phi_n$ which are no longer conjugacies, but which are  uniformly quasiconformal and converge (with some extra care) to a map $\phi$. If one is careful enough with keeping the same homotopy class of $\phi$ for all the  $\phi_n$,   one can show that the limit map $\phi$ is a \emph{ Thurston Conjugacy}. A Thurston conjugacy  is not an actual conjugacy, but a quasiconformal map homotopic to a topological conjugacy  relative the postcritical set. However, a  theorem by Thurston and Sullivan \cite{Su88}, \cite[Lemma 4.3]{Che10} then ensure the existence of a true quasiconformal conjugacy.

A similar pullback argument starts with a quasiconformal homeomorphism that is a  conjugacy on some forward invariant set, for example a finite  graph formed by rays and their landing points.   One then can pull back this initial map (again, some control on the postsingular set is needed, and the 'similarity' of the two maps is used) to obtain a sequence of maps $\phi_n$ which are uniformly quasiconformal and converge (with some extra care) to a map $\phi$ which turns out to be a conjugacy  because it satisfies some appropriate functional equation.


{See \cite{KSvS07} for another example in which the conjugacy on the boundaries of puzzle pieces given by the Boettcher map is exploited to construct a quasi-conformal conjugacy between two non-recurrent maps.}
 
Let us conclude this section with  a visual summary of the relations between the different classes   from \cite[Section 4]{Lyu99} (see Section~\ref{Fiber Section} for a definition of combinatorial classes).

Let $f$ be a unicritical polynomial or an exponential map. 
\[\text{Comb}(f)\supset  \text{Top}(f)\supset  \text{Qc}(f)\supset \text{Conf}(f)=\{f\}. \]
By $\text{Comb}(f)$ we mean the combinatorial class of $f$, that is 
all maps that are combinatorially equivalent to $f$; by $\text{Top}(f)$ (resp. $ \text{Qc}(f)$ , resp. $\text{Conf}(f)$), the connected component containing $f$ of the set of maps which are topologically (resp. quasiconformally, resp. conformally) conjugate to $f$. Observe that in the exponential family and more in general, two maps in the same family can be conformally conjugate (for example,  $e^z+c$ and $e^z+c+2\pi i $), but  conformal classes are still  singletons.

\section{The No Invariant Line Fields Conjecture (NILF)}\label{The No Invariant Line Fields Conjecture (NILF)}
The relation  between MLC and the No Invariant Line Fields Conjecture is very well explained in \cite{McM}, \cite{McM94}. Since then, there has been a major breakthrough; namely,  the construction of examples of quadratic polynomials with Julia sets of positive measure 
by Buff and Ch\'eritat \cite{BC05}. 
One of the reasons why the result by Buff and Ch\'eritat received a lot of attention is because it was hoped that one could disprove the existence of  Julia sets with positive measure, and deduce that there could be no invariant line fields which in turn would imply density of hyperbolicity. Despite the existence of these Julia sets of positive measure, for now no invariant line fields have been constructed which are supported on the Julia sets of quadratic polynomials, so the problem is still open.  

\begin{defn}[\cite{McM}, Section 3.5] Let $f$ be a rational map or a transcendental map. We say that $f$ admits an \emph{invariant line fields} if there is a measurable Beltrami differential $\mu$ such that $f^*\mu=\mu$ almost everywhere, $|\mu|=1$ on a set $X$ of positive measure and $|\mu|=0$ elsewhere. 
\end{defn}
The name 'line field' is due to the fact that on the set where  $|\mu|=1$ the Beltrami differential defines a direction via  the field of ellipses induced by $\mu$.


One is most interested in the case in which $X\subseteq J$.  
In particular, Julia sets which have measure zero cannot support line fields.

\begin{Conjecture}[No Invariant Line Fields Conjecture]\label{NILF conjecture} 
Suppose that  $f$ is a polynomial, an entire transcendental function with finitely many singular values or a  rational map which is not double covered by an integral torus endomorphism. Then   $f$   supports no invariant line field on its Julia set.  
\end{Conjecture}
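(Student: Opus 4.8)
The natural plan is to route everything through rigidity. By the discussion of this section and \cite{McM}, \cite{McM94}, Conjecture~\ref{NILF conjecture} is equivalent to the Density of Hyperbolicity Conjecture, so it suffices to exclude queer components, i.e.\ to establish quasiconformal --- equivalently, by Section~\ref{Fiber Section}, combinatorial --- rigidity of non-hyperbolic parameters in each family under consideration. First I would stratify parameter space by renormalization depth. For non-infinitely-renormalizable quadratic and, via \cite{AKLS}, unicritical polynomials, one invokes Yoccoz's theorem (\cite{Hu}): the Yoccoz puzzle pieces shrink to points, so the Julia set has no wandering continua and carries no invariant line field, and the companion shrinking of parapuzzle pieces yields triviality of the corresponding fibers. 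For non-renormalizable rational maps one would instead run the puzzle-and-complex-bounds machinery of Kozlovski--Shen--van Strien \cite{KSvS07} together with the Thurston pull-back argument sketched in Section~\ref{Topological rigidity}, upgrading a combinatorial equivalence to a genuine quasiconformal conjugacy; by the Measurable Riemann Mapping Theorem this forces any would-be line field to be trivial.

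\textbf{The renormalizable case.} The remaining and genuinely hard part is the infinitely renormalizable regime. Here the plan would be: (i) prove a priori (complex) bounds for the renormalizations, giving precompactness of the renormalization orbit in a suitable space of polynomial-like germs; (ii) exploit contraction/hyperbolicity of the renormalization operator on the relevant combinatorial classes --- in the spirit of Sullivan, McMullen, Lyubich and Kahn--Lyubich (\cite{Su88}, \cite{Kah06}, \cite{KL08}, \cite{KL09}) --- to show that the small Julia sets shrink and that $J(f)$ is the nested intersection of the corresponding puzzle neighbourhoods, hence supports no invariant direction field along the recurrent critical orbit; (iii) transfer the conclusion from the dynamical plane to the parameter plane to obtain triviality of fibers, and therefore density of hyperbolicity.

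\textbf{The main obstacle.} The difficulty --- and the reason this is a conjecture and not a theorem --- lies precisely in steps (i)--(ii) above. Complex bounds are \emph{not} available for all combinatorial types: as recalled in Section~\ref{The Mandelbrot Local Connectivity Conjecure}, the constructions of \cite{Mi00}, \cite{So00} and Levin's quantitative refinements \cite{Lev11}, \cite{Lev14} produce infinitely renormalizable parameters whose small Julia sets do not shrink to points, and for such maps the usual a priori bounds fail, which is exactly what breaks the renormalization-operator picture. Nor can one retreat to a purely measure-theoretic argument: since Buff and Ch\'eritat \cite{BC05} have produced quadratic polynomials with Julia sets of positive area, ``$J(f)$ has measure zero'' is false in general, so the line field cannot be excluded for trivial reasons. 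A workable approach would thus have to either push the renormalization machinery into these exotic combinatorics --- perhaps using Levin's extension result for multipliers of periodic orbits beyond the attracting domain and his strengthening of the Pommerenke--Levin--Yoccoz inequality \cite{Lev09} --- or isolate a new mechanism obstructing an invariant line field on recurrent critical orbits. I expect this to be the crux, and it is where I would concentrate effort.

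\textbf{The transcendental families.} For families in the Speiser class $\SS$ a preliminary difficulty appears before any of the above can even begin: the combinatorial infrastructure --- dynamic and parameter rays, a Yoccoz-type puzzle, and the landing theorems --- is only partially in place outside the exponential family. So the first step there would be to develop the analogue of the framework of Section~\ref{Fiber Section}, building on the recent description of the escaping set and of dynamic rays for maps of finite order in class $\BB$ (\cite{RRRS}), and only then import the rigidity scheme outlined above. Even granting all of this, the class-$\SS$ case inherits exactly the infinitely-renormalizable obstruction described in the previous paragraph, so it is not expected to be easier than the polynomial one.
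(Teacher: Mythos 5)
The item in question is stated in the paper as a \emph{Conjecture}; the paper offers no proof of it (indeed it is a major open problem, equivalent to density of hyperbolicity), so there is no argument of the paper's against which to measure a ``proof.'' You correctly recognized this and did not attempt to manufacture one. Your road-map and catalogue of obstacles --- routing through the equivalence with density of hyperbolicity and combinatorial rigidity, stratifying by renormalization depth, citing the failure of a priori bounds for certain infinitely renormalizable combinatorics (\cite{Mi00}, \cite{So00}, \cite{Lev11}, \cite{Lev14}), pointing out that the Buff--Ch\'eritat positive-area examples \cite{BC05} kill any purely measure-theoretic shortcut, and noting the missing combinatorial infrastructure in class $\SS$ --- is consistent with the survey's own discussion of the conjecture in Sections~\ref{The No Invariant Line Fields Conjecture (NILF)}--\ref{Fiber Section}. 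One minor omission: for rational maps the conjecture must exclude flexible Latt\`es maps (which do carry an invariant line field), so a full programme for $\Rat_d$ would have to keep track of that exceptional family; your sketch is implicitly specialized to the unicritical and exponential cases, where the issue does not arise.
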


 A   rational map which is not double covered by an integral torus endomorphism is called a flexible Latt\`es map. It admits the invariant line field generated by $\frac{d\ov{z}}{dz}$ on its Julia set (which is the entire Riemann sphere). 

An interesting feature of the No Invariant Line Fields Conjecture is that it shifts the focus from the structure of the parameter space to the ergodic properties of an individual  map, which are in principle easier to investigate.

Several results are known about absence of line fields for different classes of maps, one can find a summary in   the paper \cite{YZ10}, where the authors  use puzzle techniques to show that a rational map carries no invariant line field when its Julia set is a Cantor set. Another important result is a rigidity theorem by McMullen \cite[Theorem 10.2]{McM} stating that there are no invariant line fields for infinitely renormalizable quadratic polynomial-like maps which satisfy a priori bounds (the proof also works for degree $d$ unicritical polynomial maps \cite{Che10}). 
Additional results for absence of invariant line fields for rational maps, obtained using quadratic differentials, can be found in \cite{Ma05}. {If the modulus of the derivative on the  critical orbit grows fast enough,   the Julia set of a unicritical polynomial has no measurable invariant line field by \cite{Lev02}.}
%

The Density of hyperbolicity conjecture is equivalent to the NILF conjecture  (see \cite{MSS}, \cite{McM94}).
\begin{thm}[Non-existence of  Invariant Line Fields is equivalent to  Density of Hyperbolicity]\label{No ILF vs Density of Hyperbolicity} Let $\FF$ be a family of unicritical  polynomials or the exponential family.
A parameter $c$ belongs to a non-hyperbolic component if and only if $J_c$ has positive measure and supports an invariant line field. In particular, Conjecture \ref{Density of hyperbolicity conjecture} holds if and only if Conjecture \ref{NILF conjecture} holds.
\end{thm}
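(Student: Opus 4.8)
The plan is to prove the two implications separately, relying on the equivalent characterizations of $J$-stability from Theorem~\ref{Equivalent notions of J-stability} and on the Measurable Riemann Mapping Theorem. First I would establish the easy direction: if $c$ belongs to a non-hyperbolic component $W$ of the structurally stable set, then $J_c$ carries an invariant line field supported on a set of positive measure. Since $W$ is open and nonempty, pick a holomorphic family $\{f_\lambda\}_{\lambda\in W}$; by Theorem~\ref{Equivalent notions of J-stability} the Julia set moves holomorphically over $W$, giving a holomorphic motion $\phi_\lambda : J_c \to J(f_\lambda)$ which conjugates the dynamics. Differentiating the motion in $\lambda$ at $\lambda=c$ produces a nonzero Beltrami differential $\mu$ on $J_c$ which is $f_c$-invariant; one then argues that $\mu$ cannot be zero almost everywhere on $J_c$, for otherwise $\phi_\lambda$ would be conformal on $J_c$ by Weyl's Lemma and (together with the fact that, for unicritical polynomials or exponentials, the Julia set meets the postsingular set densely enough, or that $\mathrm{int}\,K_c=\emptyset$ in the non-hyperbolic case as noted in Section~\ref{Density of Hyperbolicity}) the conjugacy would extend to a conformal, hence affine, conjugacy, forcing $f_\lambda = f_c$ and contradicting that $W$ is open. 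In particular $|\mu|>0$ on a positive-measure subset of $J_c$, and after normalizing ($\mu \mapsto \mu/|\mu|$ where $|\mu|\neq 0$) we obtain a genuine invariant line field; since line fields require positive Lebesgue measure to be supported, $J_c$ has positive measure.

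For the converse, suppose $J_c$ has positive measure and supports an invariant line field $\mu$, so $f_c^*\mu = \mu$ a.e., $|\mu|=1$ on a positive-measure set $X\subset J_c$, and $|\mu|=0$ elsewhere. The idea is to use $\mu$ to build a nontrivial holomorphic family of quasiconformal deformations of $f_c$. For $t$ in the unit disk consider the Beltrami differential $t\mu$; by the Measurable Riemann Mapping Theorem with parameters there is a family $\psi_t$ of quasiconformal maps integrating $t\mu$, depending holomorphically on $t$, and because $t\mu$ is $f_c$-invariant the conjugate $f_t := \psi_t \circ f_c \circ \psi_t^{-1}$ is again holomorphic (rational of the same degree, or entire in class $S$ with the same number of singular values), and depends holomorphically on $t$. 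I would then argue that this family is nonconstant — this uses that $\mu$ is genuinely supported on a positive-measure set, so the $\psi_t$ are not conformal and the conjugacy classes vary — and that it stays within the chosen parameter family $\FF$ (this is automatic for the exponential family and for unicritical polynomials by the normalization/classification statements of Section~\ref{Families}). The maps $f_t$ are all topologically, indeed quasiconformally, conjugate to $f_c$ via $\psi_t$, which restricts to a conjugacy on the Julia sets; hence $c$ lies in an open set of the parameter space on which all maps are topologically conjugate, i.e. in a component of the structurally stable locus. Since non-superattracting hyperbolic parameters and parabolic/Siegel/Cremer parameters are ruled out (an invariant line field forces $J_c$ to have full measure in no proper attracting basin, and indifferent or superattracting parameters sit on the boundary of hyperbolic components, hence are not in the interior of the stable set) this component must be non-hyperbolic.

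Combining the two directions, a parameter $c$ lies in a non-hyperbolic component if and only if $J_c$ has positive measure and supports an invariant line field. Therefore the absence of non-hyperbolic components — which by the last paragraph of Section~\ref{Density of Hyperbolicity} is exactly Conjecture~\ref{Density of hyperbolicity conjecture} — is equivalent to the nonexistence of invariant line fields on Julia sets, which for these families is Conjecture~\ref{NILF conjecture} (no flexible Latt\`es maps occur among unicritical polynomials or exponentials). This proves the theorem.

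The main obstacle I expect is the converse direction, specifically showing that the deformation family $t\mapsto f_t$ is genuinely nonconstant and remains inside the given parameter space: one must verify that a line field supported on a positive-measure subset of $J_c$ really does produce a nontrivial quasiconformal deformation (a conformal $\psi_t$ would make the deformation trivial, so one needs $\|t\mu\|_\infty>0$ to translate into $\psi_t$ not being conformal — true by Weyl's Lemma — and then that distinct $t$ give non-conformally-conjugate maps), and that for the exponential family the deformed map is still of the form $e^z+c'$ up to affine conjugacy, which rests on the rigidity/classification facts recorded in Section~\ref{Families}. The bookkeeping of which indifferent or superattracting parameters must be excluded, while routine, also needs care to make the phrase "non-hyperbolic component" precise.
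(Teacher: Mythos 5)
The converse direction of your proposal — integrate $t\mu$ via the Measurable Riemann Mapping Theorem, get a nonconstant holomorphic one-parameter family $t\mapsto f_t$ inside $\FF$ of quasiconformally conjugate maps, conclude $c$ is in the interior of the structurally stable set and hence (since hyperbolic Julia sets have measure zero) in a non-hyperbolic component — is essentially identical to the paper's argument, including the hand-waved step that $t\mapsto c(t)$ is injective.

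The forward direction, however, has a genuine gap, and it is exactly the point the paper takes pains to handle differently. You propose to take the $J$-stability motion $\phi_\lambda:J_c\to J(f_\lambda)$ and "differentiate it" to get a Beltrami differential $\mu$ supported on $J_c$. But a holomorphic motion of a compact set does not in itself produce a Beltrami differential: you must first extend $\phi_\lambda$ to an open set (by the $\lambda$-lemma or Slodkowski) to even speak of $\bar\partial\phi_\lambda$ in $L^1_{\mathrm{loc}}$, and the resulting dilatation of the extension could a priori be supported anywhere in $\C$, not on $J_c$. Your claim that "$\mu=0$ a.e.\ on $J_c$ implies $\phi_\lambda$ is conformal on $J_c$" is also not an instance of Weyl's Lemma: Weyl's Lemma upgrades $\bar\partial\psi=0$ a.e.\ on an open set to holomorphicity, and vanishing of the dilatation on a closed (possibly measure-zero) set gives nothing. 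So your argument never actually establishes that the dilatation is concentrated on $J_c$ — which is what forces $J_c$ to have positive measure.

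The paper circumvents this by working with a holomorphic motion of the escaping set $I(f_c)$ rather than of the Julia set, constructed via B\"ottcher coordinates (polynomial case) or the holomorphic dependence of dynamic rays (exponential case). This motion is \emph{conformal on $I(f_c)$ by construction} and is a genuine dynamical conjugacy there. Since $c$ is non-hyperbolic, $\overline{I(f_c)}=\C$, so the $\lambda$-lemma extends the motion uniquely to a quasiconformal motion of all of $\C$ with dilatation supported in $\C\setminus I(f_c)=J_c$. Because $f_\lambda\neq f_c$ the extended map cannot be conformal, so the dilatation is nonzero, forcing $J_c$ to have positive measure; $f_c$-invariance follows because the extension remains a conjugacy on $\overline{I(f_c)}$. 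If you want to salvage your differentiation approach you would need to first build exactly this kind of motion that is manifestly conformal off $J_c$; starting from the $J$-stability motion of $J_c$ alone does not supply that information.
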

 
\begin{proof}[Sketch of the proof]
Let $f_c\in\FF$. Recall that the Julia set is the boundary of the set of escaping points $I(f_c)$(\cite{Mi}), and in the exponential case, $I(f_c)\subset J(f_c)$. If $c$ belongs to a non-hyperbolic component $U$, by Sullivan's classification of Fatou components, the   Fatou set is either empty  (if $f_c$ is an exponential) or coincides with the basin of infinity (if $f_c$ is a polynomial).
In both cases there is a natural holomorphic motion $\phi_\lambda$  of the set of escaping points over a neighborhood of $f_c$. For polynomials, one can simply use Boettcher coordinates; for exponential maps, one needs to recall that   components of the structurally stable set do not intersect the set of escaping parameters, so that dynamic rays move holomorphically over a non-hyperbolic component. By definition, this holomorphic motion is a conjugacy on $I(f_c)$.
By  the $\lambda$-Lemma (see Lemma~\ref{Lambda Lemma}), $\phi_\lambda$ extends as a quasiconformal  holomorphic motion (still denoted by $\phi_\lambda$) of  the closure $\ov{I(f)}=\C$. The map
$\phi_\lambda$ cannot be conformal on all of $\C$ (for $\lambda\neq c$ in a neighborhood of $c$), and in the polynomial case, it is conformal on $I(f)$ by construction. So the  dilatation $\mu_\lambda=\frac{\partial_{\ov{z}} \phi_\lambda}{\partial_{{z}} \phi_\lambda}$ is supported on $J(f_c)$ which needs to have positive measure. Since $\phi_\lambda$ is a conjugacy on $I(f))$ it is also a conjugacy on its closure, so $\mu:=\frac{\mu_\lambda}{|\mu_\lambda|}$ defines an invariant line field for $f_c$.

To prove the other implication let   $\mu$ be an invariant line field for $f_c$. By the Measurable Riemann Mapping Theorem there exists a holomorphic family of quasiconformal maps $\phi_t:\C\ra\C$ such that  
$\frac{\partial_{\ov{z}} \phi_t}{\partial_{{z}} \phi_t}=t\mu$ for $t\in\D$. Since $\mu$ is $f_c$-invariant, the maps $g_t:=\phi_t\circ f_c\circ\phi_t^{-1}$ are holomorphic (invariance implies that the standard conformal structure on $\C$  where all ellipses are in fact circles is mapped back to itself, hence all $g_t$ are holomorphic by Weyl's Lemma).
Since the $\phi_t$ are homeomorphisms, $g_t$ has the same number and degrees of critical points as $f_c$, hence is a unicritical polynomial of the same degree or an exponential map of the form $z^d+c(t), \ e^z+c(t)$ respectively. It can be checked that $c(t)$ is injective in $t$ so that this gives a neighborhood $U\ni c$ of maps which are quasiconformally (hence topologically) conjugate to $f_c$ hence $U\subset\SS(\FF)$. Since the Julia set of hyperbolic maps has measure zero, $U$ is contained in a non-hyperbolic component. 
\end{proof}

\begin{rem}For transcendental functions there are many examples for which the Julia set is the entire complex plane, and  the escaping set often has positive measure.  So it is more subtle to decide whether we want to disprove the existence of ILF on the Julia set or on the Julia set intersected the set of escaping points.   For entire functions with bounded set of singular values there are no invariant line fields supported on the escaping set by Theorem 1.2 in \cite{Re09}.  However, if one drops the assumption that the set of singular values is bounded  it is possible to  construct entire functions with an infinite-dimensional family of measurable invariant line fields  supported on the Julia set intersected the escaping set (see \cite[Example 5]{EL87}). 

A theorem about  absence of invariant line fields for a large class class of entire functions (under the dynamical assumptions that the postsingular set is bounded and a specific type of  non-recurrence)  can be found in  \cite{RvS11}. Additional results for transcendental functions with bounded postsingular set can be found in \cite{GKS}.\end{rem}

\section{ Renormalization and  Branner-Hubbard-Yoccoz Puzzle}\label{Renormalization}
Let $f\in\MM$ be a map in some appropriate space of functions.
In a rather general setting, to \emph{renormalize} means to consider the first return map $f^n$ to an appropriate   subset $X$ of the target space for $f$ (in our case,   $X\subset\C$) and then rescale or normalize $f^n|_X$ in an appropriate way so as to obtain another map $\RR f$ - called the \emph{renormalization} of $f$- which belongs in some sense to  the same class as $f$. The operator $\RR$ is often called the renormalization operator. The goal is to find a class $\MM$ of functions which is invariant under $\RR$ (although, $\RR$ may be partially defined on $\MM$) and so that $\RR$ is hyperbolic (i.e., strictly contracts distances on $\MM$). In this way the iteration of $\RR$ leads to convergence towards some map called the fixed point of the renormalization. 
There are many different types or renormalization which are relevant in the complex  polynomial setting; the reader can find a list in the introduction of  \cite{IS}. 

\subsection{Polynomial-like Renormalization}

The polynomial-like renormalization is a procedure which starts with a polynomial (or, a rational map, or, a transcendental map), associates to it a map in the much larger class of  polynomial-like maps, and then uses Douady-Hubbard Straightening Theorem to go back from the polynomial-like map to another true polynomial, which in general will be different from the initial one.
 One of the most beautiful applications of this polynomial-like renormalization is that it can be used to explain the presence of the small Mandelbrot copies inside the Mandelbrot set (see \cite{DH85}, \cite{EE85} for the original proofs, \cite{Lyu07} for an expository account, see also \cite{Lyu99}) and inside of the parameter spaces of other families of rational maps (\cite{McM00}).  There is a chapter on renormalization in \cite{McM} and in \cite{Lyu99}; more details can be found in the papers quoted in this section.

\begin{defn} 
Let $U, V$ be simply connected open sets with $U\Subset V$.
A \emph{polynomial like map} $g:U\ra V$ of degree $d$ is a holomorphic covering of degree $d$ from $U$ to $V$. 
\end{defn}

Polynomial-like maps have been introduced in \cite{DH85} to explain the self-similarity of the Mandelbrot set, that is  the presence of copies of $M$ inside itself which are quasiconformally equivalent to the entire Mandelbrot set. 
Polynomial-like maps  share many features with true polynomial maps, for example it is possible to define the filled Julia set and  dynamic rays.  A fundamental theorem is the Douady-Hubbard Straightening Theorem. It states that every polynomial-like map of degree $d$ is quasi-conformally conjugate to a true polynomial (uniquely defined if the Julia set of the polynomial-like map is connected), and moreover that the conjugacy is conformal on the filled Julia set.  The straightening  can be seen as a natural projection from the space of all polynomial-like maps of degree $d$ to a specific slice consisting of  polynomials of degree $d$.

The following definition is from \cite{McM}, Chapter 7.  See also \cite{Lyu99}, 
\begin{defn}A polynomial  is called \emph{renormalizable} if there exist topological disks $U,U'$ such that $f^n:U\ra U'$ is polynomial-like  with connected Julia set and $U$ contains a critical point. 
\end{defn}

The polynomial-like map $f^n|_U$ can be straightened to a true polynomial, which  can possibly be renormalized again. If the renormalization procedure can be repeated infinitely many times $f$ is called \emph{infinitely renormalizable}. The Julia set of the renormalized map is  called the \emph{small Julia set}.

\begin{rem}There are several slightly different ways of defining a renormalization. Sometimes the renormalized map is just the restriction of $f^n$ to an appropriate neighborhood, other times the renormalized map is the polynomial arising from Doaudy and Hubbard's straightening theorem, other times it is rescaled or normalized in different ways. One can drop the   requirement that  the   Julia set for the polynomial-like map is connected, but then there is no uniqueness for the straightened polynomial. 
\end{rem}

As mentioned in the beginning, the process of renormalization is a dynamical system by itself, in that the \emph{renormalization operator} $\RR:f\ra Rf$ acts on the space of polynomial-like maps. Of course, it is only partially defined. 

Suppose that $f^m$ is polynomial-like in a neighborhood $U$ of a critical point, hence that $f$ is renormalizable with period $m$. Its renormalization can be of \emph{satellite} or \emph{primitive} type. In the satellite case, the small Julia set $J$ of the renormalization $f^m|_U$  and its images $f^i(J)$ for $i=0\ldots m$ all touch in a repelling fixed point $\alpha$ with some rotation number $p/q$, and are permuted according to this rotation number; in the primitive case, the small Julia set $J$ and its images are disjoint.

\begin{rem}Caution: In the renormalization context, the word \emph{combinatorics} usually refers to the following two sets of data: If the map is infinitely renormalizable of satellite type, its combinatorics is given by the sequence $\{p_i/q_i\}$ of the combinatorial rotation numbers under which the small Julia sets are permuted; if  
the map is  infinitely renormalizable of primitive type,  its combinatorics is given by the periods of renormalization together with additional homotopy information on the position of the small Julia sets. This is not the same  way in which the word combinatorics is used in Section~\ref{Fiber Section}. The red thread connecting these notions is the following: all these a priori different sets of data- the sets of renormalization data for satellite or for primitive renormalization, as well as the data from the orbit portrait in Section~\ref{Fiber Section},  identify a unique infinite sequence of  small Mandelbrot copies, and in all cases the goal of the  rigidity conjecture in any of its forms is to show that the diameters of these copies shrink to zero so that their limit is a unique point.  
\end{rem}

\begin{thm}[Yoccoz Theorem  \cite{Hu}, \cite{Mi00}]\label{Yoccoz Theorem}  Let  $z^2+c$  be at most finitely renormalizable with all periodic points repelling. Then $J_c$ is locally connected and the Mandelbrot set is locally connected at $c$.
\end{thm}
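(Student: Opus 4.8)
The plan is to follow Yoccoz's original argument via the construction of the Yoccoz puzzle, splitting the work between the dynamical plane and the parameter plane. First I would set up the puzzle in the dynamical plane of $f_c = z^2+c$. Assuming $c\in M$ (so $J_c$ is connected) and that $f_c$ is not renormalizable beyond some finite level with all periodic cycles repelling, the $\alpha$-fixed point is repelling and is the landing point of $q\geq 2$ periodic dynamic rays (by Theorem~\ref{Douady Hubbard}), which cut the plane into sectors. Together with an equipotential this produces the puzzle pieces of depth $0$; taking preimages under $f_c^n$ yields the nested puzzle pieces $P_n(z)$ of depth $n$ containing a point $z$. The key dichotomy is: either $c$ (equivalently the critical value, or the critical point $0$) is \emph{non-renormalizable} at this combinatorial level, in which case the nest of critical puzzle pieces $P_n(0)$ shrinks to the point $0$; or it is renormalizable, which by hypothesis happens only finitely often, and after the last renormalization one is reduced to the non-renormalizable case for a polynomial-like restriction.

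Second, the heart of the matter is the \emph{shrinking of puzzle pieces}. For a non-renormalizable combinatorics one shows $\operatorname{diam} P_n(z)\to 0$ for every $z$. The mechanism is Yoccoz's modulus estimate: one analyzes the first-return maps to critical puzzle pieces and distinguishes the \emph{non-central} returns (where a Gr\"otzsch/Koebe-type argument forces a definite gain of modulus in the annuli $P_n(0)\setminus P_{n+1}(0)$) from \emph{central} cascades (where one uses the fact that a pure central cascade corresponds exactly to renormalization, which is excluded). Summing the moduli of these nested annuli gives $\sum_n \operatorname{mod}(P_n(0)\setminus P_{n+1}(0)) = \infty$, whence by the Gr\"otzsch inequality $\bigcap_n P_n(0) = \{0\}$; spreading this around by the dynamics gives shrinking at all points, hence $J_c$ is locally connected. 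This is the step I expect to be the main obstacle: controlling the combinatorics of the return maps and extracting a definite modulus gain is the technically heaviest part of the whole argument, and the careful bookkeeping of central versus non-central returns is exactly where the ``non-infinitely-renormalizable'' hypothesis is consumed.

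Third, I would transfer the picture to the parameter plane. One defines the \emph{parapuzzle pieces}: for a parameter $c_0$ with the above combinatorics, and for each depth $n$, the parapuzzle piece $\mathbf P_n(c_0)$ is the set of parameters $c$ for which the critical value $c$ lies in the corresponding dynamical puzzle piece of $f_c$ of depth $n$, bounded by the parameter rays landing at the appropriate Misiurewicz parameter together with an equipotential in parameter space (using the landing theorem for parameter rays and the ``rays are born in wakes'' statements, Theorems on parameter rays above). The standard \emph{Yoccoz parameter--dynamics comparison} then says: the modulus of the parapuzzle annulus $\mathbf P_n(c_0)\setminus \mathbf P_{n+1}(c_0)$ is comparable to (in fact essentially equal to, up to the first renormalization level) the modulus of the corresponding dynamical annulus $P_n(c_0)\setminus P_{n+1}(c_0)$ in the plane of $f_{c_0}$ itself. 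This comparison is proved by a holomorphic motion / $\lambda$-lemma argument (Lemma~\ref{Lambda Lemma}): the boundaries of the dynamical puzzle pieces move holomorphically with $c$ over the relevant parameter region, and the critical-value parametrization is, up to bounded distortion, the natural ``diagonal'' of this motion.

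Finally I would conclude: since $\sum_n \operatorname{mod}(P_n(c_0)\setminus P_{n+1}(c_0))=\infty$ from the dynamical shrinking, the comparison gives $\sum_n \operatorname{mod}(\mathbf P_n(c_0)\setminus \mathbf P_{n+1}(c_0))=\infty$, hence $\bigcap_n \mathbf P_n(c_0)=\{c_0\}$ and the parapuzzle pieces form a basis of connected neighborhoods of $c_0$ in $M$. As $c_0$ ranges over all at-most-finitely-renormalizable parameters with all cycles repelling, this shows $M$ is locally connected at every such $c_0$, which together with the local connectivity of $J_{c_0}$ is exactly the assertion of Theorem~\ref{Yoccoz Theorem}. (In the finitely-renormalizable-but-not-zero case one first peels off the finitely many renormalization levels, applying a Douady--Hubbard straightening and the above argument to the last polynomial-like level, then reassembles; the ``$\mathbf{Mi00}$'' reference handles the parabolic-free hypothesis precisely by ruling out the indifferent cycles that would otherwise obstruct the ray-landing inputs.)
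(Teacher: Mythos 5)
The paper itself does not prove Yoccoz's theorem; being a survey, it states it, defers to \cite{Hu} and \cite{Mi00} (and points to the outline in \cite{McM94}), and in the Puzzles subsection gives only a general description of the strategy: start from a forward-invariant graph of periodic rays, estimate moduli of nested annuli between consecutive critical puzzle pieces, show the sum diverges so the intersection is a point, then run the parallel parapuzzle argument in parameter space. Your blind proposal reproduces exactly this outline and fills in the standard details --- the choice of the $\alpha$-fixed point with its cycle of $q\geq 2$ rays, the central/non-central return dichotomy and its role in producing the modulus gain, the peeling-off of finitely many renormalization levels via straightening, and the phase-parameter modulus comparison for parapuzzles --- so it is a correct sketch of the same classical route the survey alludes to rather than an alternative argument. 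The only place where the modern exposition typically differs in emphasis from what you write is the phase-parameter transfer: you attribute the comparability of parapuzzle and dynamical annulus moduli to a holomorphic-motion/$\lambda$-lemma argument, whereas Yoccoz's original argument (as in \cite{Hu}) uses a more direct univalence/transversality estimate for the critical-value map; both are legitimate and your phrasing is closer to Lyubich's presentation. This is a presentational difference, not a gap.
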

A very clear  outline of the proof of Yoccoz's Theorem can be found in \cite{McM94}. We remark that 
it took more than twenty additional years to prove the analog of Yoccoz's Theorem for unicritical polynomials of arbitrary degree $d$ (\cite{AKLS}; see also  \cite{KvS09}, \cite{PT15}).

\begin{thm}[AKLS] Let  $z^d+c$  be at most finitely renormalizable with all periodic points repelling.  Then $f_c$ is combinatorially rigid and the corresponding  Multibrot set is locally connected at $c$.
\end{thm}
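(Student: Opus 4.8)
The plan is to follow the strategy of Yoccoz for the degree-$2$ case, but to replace the missing ingredients by the newer tools that were developed specifically to handle higher degree. The whole argument is a parameter-plane/dynamical-plane tandem, so I would first establish local connectivity of the Julia set $J_c$ for a finitely renormalizable $z^d+c$ with all cycles repelling, and then transfer the shrinking estimates to the parapuzzle. For the dynamical statement the key object is the Branner–Hubbard–Yoccoz puzzle: one builds puzzle pieces from the periodic dynamic rays landing at the $\alpha$-fixed point (these exist and land by Theorem~\ref{Douady Hubbard}, since $\PP(f_c)$ is bounded), together with preimages of equipotentials. One then wants to show that the nest of puzzle pieces around the critical point shrinks to a point; by the result quoted in the text (local connectivity of a connected unicritical Julia set is equivalent to shrinking of the puzzle pieces, \cite{Lev98}), this gives local connectivity of $J_c$, and more generally triviality of the impression of every puzzle nest.

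The heart of the matter, and the step I expect to be the main obstacle, is the \emph{a priori bounds / \,$\mod$-growth} estimate for the principal nest in arbitrary degree. In Yoccoz's degree-$2$ proof one exploits the combinatorics of the Yoccoz $\tau$-function and the quadratic "growth of moduli" coming from the fact that the annuli $\mod(V_n\setminus V_{n+1})$ add up to infinity along a non-renormalizable descent; for $d>2$ the naive modulus estimate degenerates and one genuinely needs a substitute. Here I would invoke the covering/almost-degenerate annulus techniques of Kahn–Lyubich (the "Quasi-Additivity Law" and the "Covering Lemma" of \cite{KL09b}), which are exactly the analytic engine that makes the degree-$d$ puzzle argument work: they allow one to control how much a high-degree covering map can shrink moduli, and to conclude that either the nest is non-renormalizable and the moduli blow up (forcing the pieces to shrink), or one hits a renormalization level, at which point one passes to the renormalized polynomial-like map and repeats. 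Since by hypothesis $f_c$ is only finitely renormalizable and the last renormalization has all cycles repelling, after finitely many such passages one is in the non-renormalizable situation and the moduli do diverge, so all puzzle pieces — critical and non-critical — shrink to points.

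With the dynamical shrinking in hand, the second half is the parameter-plane argument, which is now comparatively formal. One defines parapuzzle pieces in the $M_d$-plane bounded by parameter rays with the same angles as the dynamical rays used above (these land by the parameter-ray landing theorem), and one uses a Yoccoz-type "para–holomorphic motion" / transversality argument — the map sending a parameter $c$ to the position of the critical value $c$ inside its own dynamical puzzle is, after the right normalization, close to the identity — to deduce that the diameters of the parapuzzle pieces are comparable (up to a universal factor) to the diameters of the corresponding dynamical puzzle pieces around the critical value. Hence the parapuzzle nest around $c$ also shrinks to $c$, which gives local connectivity of $M_d$ at $c$ and, simultaneously, combinatorial rigidity: any two finitely renormalizable parameters with repelling cycles and the same combinatorics lie in the intersection of the same shrinking parapuzzle nest and therefore coincide. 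The two places where care is needed are (i) making the covering-lemma estimates genuinely uniform so the blow-up of moduli is quantitative enough to survive the transfer to parameter space, and (ii) handling the finitely many renormalization levels — i.e. gluing the "renormalized" rigidity statement back into the original parameter plane via the straightening map, which is quasiconformal but not conformal, so one must track the combinatorics through the straightening and appeal to the rigidity of the renormalized (lower-complexity) map inductively.
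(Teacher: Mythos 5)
The survey only states this result and cites \cite{AKLS} (together with \cite{KvS09}, \cite{PT15}); it does not supply a proof. So the comparison has to be against the strategy that the survey itself sketches elsewhere for arguments of this kind, and against the actual AKLS paper. You have correctly put your finger on the genuine technical novelty of the degree-$d$ case: the failure of the quadratic ``growth of moduli'' and the need for the Kahn--Lyubich Quasi-Additivity Law and Covering Lemma to obtain \emph{a priori} bounds along the principal nest. That is indeed the engine of the whole argument, and your treatment of the dynamical plane (puzzle shrinking $\Rightarrow$ local connectivity of $J_c$, with finitely many renormalizations handled inductively through straightening) is consistent with \cite{KL09b}. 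Where your plan diverges from AKLS is on the parameter side. You propose the Yoccoz-style transfer: show the parapuzzle pieces shrink by a transversality/near-identity argument comparing the position of the critical value inside its own puzzle. AKLS instead use the a priori bounds to run a Sullivan--Thurston pullback argument, showing that any two combinatorially equivalent finitely renormalizable parameters with only repelling cycles are \emph{quasiconformally conjugate}, and then apply the open--closed argument (qc classes are open or singletons, combinatorial classes are closed, so equality of the two forces both to be singletons). This is exactly the machinery that the survey outlines in the Remark on a priori bounds in Section~\ref{Renormalization} (``a priori bounds \dots\ allow the usage of a well-established machinery based on Sullivan--Thurston pullback argument to obtain combinatorial rigidity''), in the ``Thurston's pullback argument'' subsection of Section~\ref{Topological rigidity}, and in the open--closed discussion of Section~\ref{Fiber Section}. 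The two routes are genuinely different: the pullback + open--closed route sidesteps the transversality estimate you flag as delicate (which is indeed nontrivial to make uniform for $d>2$, and is the step you left most hand-wavy); the direct parapuzzle route, when it can be carried out, yields more explicit geometric control of the parameter-plane nest. For the rigidity conclusion itself your last sentence also needs the intermediate fact --- which the survey establishes in Section~\ref{Fiber Section} --- that two parameters with the same orbit portrait lie in the same nest of parawakes/parapuzzle pieces; once that is in place, shrinking does give both combinatorial rigidity and local connectivity of $M_d$ at $c$, as you say.
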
 
%
 
\begin{rem}[A priori bounds]
Complex bounds have been introduced in \cite{Su88} and have been widely used thereafter (see \cite{Lev11} for a list of works using complex bounds).
Roughly speaking, a priori bounds imply that a sequence of renormalizations is precompact, allowing  the usage of a well established machinery based on Sullivan-Thurston pullback argument to obtain combinatorial  rigidity (see for example \cite{Che10}). So in many cases, the main difficulty when studying a new combinatorial class of (infinitely renormalizable) parameters relies consists in proving some kind of a priori bounds.  A priori bounds for several different combinatorial classes of parameters have been proven in \cite{Kah06}, \cite{KL08}, \cite{KL09}. 
It is worth noting that not all parameters satisfy some type of a priori bounds, for which alternative approaches have to be considered (\cite{Lev09}, \cite{Lev11}, \cite{Lev14}).
\end{rem}

For quadratic polynomials, Yoccoz's Theorem can be recovered from one of the main results of a recent work by Graczyk and \'Swi\c{a}tek (see Theorem 1 and p.56 in \cite{GS17}. Their work studies and uses properties of the harmonic measure on the boundary of the connectedness locus for the families of unicritical polynomials in order to deduce conformal similarity results between   parameter and dynamical space.  
 \subsection{Puzzles}
 Puzzles have been introduced for cubic polynomials by \cite{BH92}, and used by Yoccoz to prove theorem~\ref{Yoccoz Theorem}. In general, one starts with a forward invariant graph $\Gamma$ and then defines puzzle pieces of level $n$ as the connected components of $\C\setminus f^{-n}(\Gamma)$. By forward invariance of  $\Gamma$, puzzle pieces are either nested or disjoint, and puzzle pieces of a given level $n$ map to puzzle pieces of level $n-1$.  For polynomials $\Gamma $ usually consists of one or more cycles of periodic rays together with their landing points. Puzzles have been used to prove many rigidity and local connectivity results for polynomials. The idea behind this is that one considers a nested sequence of puzzle pieces of increasing levels (for polynomials, puzzle pieces  can be made into bounded sets by cutting them with equipotentials) containing a given point $z_0$ (usually, a critical point). One then estimates the modulus of the annulus between two consecutive puzzle pieces. If one can get that the sum over all the moduli is infinite, one gets that the intersection of all those pieces is the single point $z_0$; since the boundaries of puzzle pieces are escaping points (and preperiodic points, but this ends up being not relevant) this  gives a basis of connected neighborhoods of $z_0$ in the Julia set, proving local connectivity. Similar techniques are used to construct puzzles in the parameter space, which are called parapuzzles and are constructed using parameter rays. In fact,  puzzles are strictly related to fibers (see Section~\ref{Fiber Section}). 
This kind of 'controlling the modulus' techniques, based on the so-called \emph{a priori bounds}, are very involved, and are used for example in \cite{Lyu97} \cite{Kah06} \cite{KL08}, \cite{KL09}.

Puzzles for rational maps have been introduced in \cite{Ro}, see also \cite{YZ10}.
Puzzles can be used also to construct quasiconformal conjugacies between maps which have the same combinatorics (the same ray portrait), see \cite{Cui01},\cite{Be15}.


\subsection{Parabolic and Near-Parabolic Renormalization}


Yet another quite different type of renormalization is the renormalization that can be done for parabolic or near-parabolic parameteters. It is  called \emph{parabolic} and \emph{near-parabolic renormalization} respectively, the latter being called also cylinder renormalization or Inou-Shishikura renormalization. They are both clearly described in \cite{IS}, for germs of the form $f(z)=z+a z^2+\ldots$ and $a\neq 0$. Very roughly speaking the renormalization of a parabolic   map (resp. a near parabolic map) is given by an iterate of $f$ which starts in a fundamental domain in the repelling petal and then goes back to the attracting petal (resp. starts in a fundamental domain in the repelling petal and comes back to the repelling petal). This is achieved using the horn map, or Lavaurs map. The appropriate iterate of $f$ is then associated to a map defined in a neighborhood of $0$ of the same form as $f$.

One of the main problems in this case is finding an invariant compact class where the renormalization procedure can be iterated infinitely many times (\cite{Sh98}, \cite{IS}). For parabolic renormalization, one such class is the class $\FF_0$ of appropriately normalized holomorphic functions defined in a Jordan  neighborhood of $0$, for which  $0 $ is an indifferent fixed point, and  which are branched covering with a unique singular value and all critical points of degree at most $2$. For this reason, this class is most suitable to study parabolic renormalization of quadratic polynomials. For near-parabolic renormalization the definition is more involved \cite{IS}.

Near-parabolic renormalization has been applied to the study of rigidity for quadratic polynomials which are  infinitely satellite  renormalizable. For this type of renormalization class the  nests of annuli near the critical point can become more and more degenerate (this is called lack of \emph{a priori bounds}; see  \cite{So00} for examples of satellite infinitely renormalizable polinomials with no a priori bounds). Indeed, under certain conditions on the sequence $\{p_i/q_i\}$ associated to the infinitely many polynomial-like renormalization of a map $f$ they are able to show that the map $f$ is also infinitely renormalizable in the near-parabolic sense, and that the sets of parameters in $\C$ which are $n$-times near-parabolic renormalizable have shrinking diameters.   The main result in this direction    is the following  \cite[Theorem D]{CS}:
\begin{theorem}\label{ChSh} Let $\ZZ=\{p_i/q_i\}$ be a sequence of rational numbers which satisfy appropriate conditions. Let $M_n(\ZZ)$ be the sets of parameters in the Mandelbrot set which are at least $n$ times satellite  renormalizable with combinatorics $p_1/q_1\ldots p_n/q_n$. Then  
there are constants $N, C$, and $\lambda \in(0, 1)$ such that 
$\diam M_n(\ZZ)\leq C λ^n$.  
In particular, if $f$ is a quadratic polynomial which  infinitely renormalizable of satellite type  with combinatorics $\{p_i/q_i\}$ it  is combinatorially rigid, and the Mandelbrot set is locally connected at c.
\end{theorem}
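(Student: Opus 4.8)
The plan is to follow the near-parabolic renormalization scheme of Inou--Shishikura and Cheraghi as described in the preceding section, treating the diameter estimate as the technical heart and then harvesting combinatorial rigidity and local connectivity from it by the standard open--closed/puzzle machinery. First I would set up the parameter sets $M_n(\ZZ)$ as decreasing compact subsets of the Mandelbrot set. Fixing a sequence $\ZZ=\{p_i/q_i\}$ satisfying the hypotheses (which in particular control the sizes of the $q_i$ so that the relevant germs stay inside an Inou--Shishikura-type invariant compact class $\FF_0$), I would observe that the condition ``$f_c$ is at least $n$ times satellite renormalizable with combinatorics $p_1/q_1,\dots,p_n/q_n$'' is a closed combinatorial condition, so each $M_n(\ZZ)$ is compact, and $M_{n+1}(\ZZ)\subset M_n(\ZZ)$; the limit set $\bigcap_n M_n(\ZZ)$ contains the parameters $c$ for which $f_c=z^2+c$ is infinitely satellite renormalizable with combinatorics $\ZZ$.

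The main step is the geometric contraction $\diam M_n(\ZZ)\le C\lambda^n$. Here I would invoke the cylinder (near-parabolic) renormalization operator on the Inou--Shishikura class: under the stated conditions on $\ZZ$, a quadratic polynomial that is $n$ times satellite renormalizable in the polynomial-like sense is also $n$ times renormalizable in the near-parabolic sense, and one obtains a parametrized family of renormalization ``windows'' $W_n$ in parameter space, with $M_n(\ZZ)\subset W_n$, together with renormalization changes of coordinates $\Psi_n:W_n\to\C$ that are univalent and conjugate the $n$-th level structure to the $0$-th level structure. The crucial estimate is that the derivative of $\Psi_n$ (equivalently, the distortion of the Koenigs/Fatou coordinates at each near-parabolic step) is \emph{uniformly} bounded, because the relevant germs all lie in one compact class, so the composition of $n$ such maps expands by at least a fixed factor $\lambda^{-1}>1$ on $W_n$; since $\Psi_n(M_n(\ZZ))$ is contained in a fixed bounded region (a piece of the Mandelbrot set at renormalization level $0$), pulling back gives $\diam M_n(\ZZ)\le C\lambda^n$. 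This is exactly Theorem~D of \cite{CS}, and I would quote the Inou--Shishikura compactness and the Cheraghi perturbed Fatou-coordinate estimates \cite{IS},\cite{CS} rather than reprove them; the uniform-contraction input is the one genuinely hard ingredient and is where all the near-parabolic analysis is concentrated.

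Granting the diameter bound, the conclusions follow quickly. If $f$ is infinitely satellite renormalizable with combinatorics $\ZZ$, then its parameter $c$ lies in $\bigcap_n M_n(\ZZ)$, which by $\diam M_n(\ZZ)\to 0$ is a single point; hence no other parameter shares the combinatorics of $f$, i.e.\ $f$ is combinatorially rigid. For local connectivity of the Mandelbrot set at $c$: the sets $M_n(\ZZ)$ are (up to attaching the appropriate equipotential/parameter-ray arcs, as one does when turning parapuzzle pieces into bounded neighborhoods) a nested basis of connected neighborhoods of $c$ in $M$, and since their diameters shrink to zero they form a neighborhood basis, which is precisely local connectivity of $M$ at $c$ by the definition in Section~\ref{The Mandelbrot Local Connectivity Conjecure}.

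The step I expect to be the main obstacle is the uniform contraction estimate for the composed near-parabolic renormalizations: one must show that passing from level $j$ to level $j+1$ the parameter-space windows shrink by a definite factor independent of $j$, which requires (i) checking that the satellite combinatorics with the prescribed $p_i/q_i$ keep the renormalized germs inside the Inou--Shishikura compact class for all $j$, and (ii) controlling the distortion of the parabolic/near-parabolic change of coordinates uniformly over that class and over the admissible rotation numbers. Both are delicate analytic facts about perturbed Fatou coordinates and horn maps; I would treat them as inputs from \cite{IS} and \cite{CS} and not attempt to reprove them here.
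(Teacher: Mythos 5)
The paper does not prove this statement at all: it is quoted verbatim as Theorem~D of Cheraghi--Shishikura \cite{CS}, with the surrounding paragraphs only summarizing the strategy (show polynomial-like satellite renormalizability implies near-parabolic renormalizability, then show the $n$-times-renormalizable parameter sets shrink). Your proposal correctly identifies \cite{CS} as the source and your sketch of the near-parabolic renormalization mechanism (Inou--Shishikura invariant class, uniform distortion of perturbed Fatou coordinates, composed contraction) is consistent with the paper's own expository description, so in that sense you take the same route — deferral to \cite{CS} — that the survey does.

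One place worth flagging: the passage from $\diam M_n(\ZZ)\to 0$ to local connectivity of $M$ at $c$ is a little glib. The sets $M_n(\ZZ)$ are nested small Mandelbrot copies, but they are not neighborhoods of $c$ in $M$; the actual neighborhoods one must shrink are the limbs $L_n = W_n\cap M$ (wake cut by an equipotential), which contain $M_n(\ZZ)$ \emph{together with} all the decorations attached to it at its Misiurewicz points. Showing that $\diam M_n(\ZZ)\to 0$ forces the decorations (hence $\diam L_n$) to shrink as well is a genuine additional step — especially delicate here because satellite-type parameters lack a priori bounds, which is precisely why near-parabolic methods are needed. This step is handled inside \cite{CS} as part of Theorem~D, so your deferral ultimately covers it, but your phrase ``up to attaching the appropriate equipotential/parameter-ray arcs'' makes it sound like a formality when it is not. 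If you wanted to make the local-connectivity conclusion self-contained from the diameter bound alone, you would need an explicit control on the size of the decoration limbs of a small copy in terms of the size of the copy, in the absence of a priori bounds.
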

The condition required on $\{p_i/q_i\}$ in Theorem~\ref{ChSh} (as well as the methods used) is different from the condition used in \cite{Lev11}, \cite{Lev14} to obtain combinatorial rigidity for other classes of satellite infinitely renormalizable quadratic polynomials with non-locally connected Julia sets. 

In \cite{CS} the reader can also find some description of the relation between polynomial-like and near parabolic renormalization.

 \subsection{Renormalization and puzzles for transcendental maps}
  It can happen that a transcendental map $f$ is  polynomial-like in a neighborhood $U$ of one of its critical points. 
   When a transcendental map is locally polynomial-like, it is possible that   some of the techniques from the polynomial case may be applied to study the structure of the dynamical plane; however  it is not evident right now  how to then apply those techniques to the study of the corresponding non-unidimensional  parameter spaces. It is even less clear how to define renormalization when $f:U\ra U'$ is an infinite degree unbranched covering, for example, how to define an 'exponential-like' map.  
 McMullen \cite{McM00} has shown  that small Mandelbrot sets are dense in the bifurcation locus for any holomorphic family of rational maps. Such a striking  phenomenon is referred to as 'universality' and is deeply connected with quadratic-like maps and renormalization.   The existence of regions where some transcendental maps are polynomial-like makes it natural to ask   in which  parameter spaces one can find appropriate slices which sport copies of the Mandelbrot set.  Copies of the Mandelbrot set can be found for example in the parameter space of the complex standard family \cite{Fa95}.

Puzzles for transcendental maps have   been used   to construct quasiconformal conjugacies for non-recurrent exponential maps \cite{Be15}. For the exponential family, the structure of ray portraits is very similar to the structure of ray portraits for polynomials, so cycles of  periodic  rays   give a natural forward invariant set to start the puzzle construction.  An important difference with respect to  the polynomial case  is that in many cases for functions with finitely many singular values, path connected components of the escaping set are just the rays, so that there is no natural way of cutting puzzle pieces to obtain bounded tiles and be able to use the geometric techniques to estimate the moduli of annuli. in conclusion it is possible to use the puzzle machinery to obtain combinatorial results and construct conjugacies but the geometry is unlikely to work out. 

\section{Combinatorial Rigidity and Triviality of Fibers}\label{Fiber Section}
\label{Combinatorial rigidity section}

In this section we will state two conjectures which are equivalent to MLC for the quadratic family, and which can be  stated in a natural way also for the families of unicritical polynomials and for the exponential family. 

Throughout this section, let $f\in\FF$ which is either the exponential family or a family of unicritical polynomials. Dynamic rays for $f$ are labeled by sequences in the  space $\SS$, where $\SS=\{-d/2+1,\ldots,0,\ldots d/2\}^\N$ if $d$ is even, $\SS=\{(-d-1)/2,\ldots,0,\ldots (d+1)/2\}^\N$ if $d$ is odd and $\SS=\Z^\N$ if $f$ is an exponential map.  For unicritical polynomials, the sets $\SS$ correspond to the p-adic expansion of the angle $\theta$, using symbols in a symmetric way with respect to $0$.

If the singular values is non-escaping, all periodic rays land at repelling or parabolic periodic points (\cite{DH85}, \cite{Re06}). Conversely, to any repelling or parabolic periodic point  $P$ we can  associate the $n$-tuple $O_P=\{G_{\s_1}\ldots G_{\s_n}\}$ of  rays which land at $P$. In the exponential case, it is not yet known whether $O_P$ could be empty for some $P$ in the case that $f$ has unbounded postsingular set.

The countable  collection $O_f=\{O_P, O_{P_2}\ldots\}$ of all $n$-tuples of rays landing at parabolic and repelling points is called the \emph{orbit portrait } for $f$. In this set we say that $g,f$ \emph{have the same combinatorics} if $O_f=O_g$.

\begin{rem}In \cite{Lyu97}, \cite{Che10}, the combinatorics of an infinitely renormalizable polynomial $P_c$ is defined as the sequence of maximal Mandelbrot copies which contain the parameters corresponding to the successive renormalization of $c$. This is indeed the same concept. One can translate the definition in \cite{Lyu97} to obtain a    sequence  of nested maximal Mandelbot copies which actually contain $c$, and each of which corresponds to a renormalization. This   defines a sequence of nested parabolic wakes containing $c$  which in turn determines which orbit portraits are realized by $P_c$: since orbit portraits are created and destroyed only when crossing wakes, for any wake  $W$  and any parameter $c\in W$ the map $P_c$ realizes all and only the portraits created while crossing the wakes containing $W$. There is a hierarchical structure of wakes given by which wakes contains which other wake.  Similarly, the combinatorics in terms of orbit portraits gives the sequence of all wakes in which the parameter $c$ is contained, a subsequence of which corresponds to the periodic rays identifying the (possibly finite) sequence of maximal Mandelbrot copies which contains $c$. The subsequence can be identified by the orbit portrait of $f$ which are contained in the sector containing $0$ for all other portraits. 
One of the advantages of defining combinatorics using orbit portraits is that it makes perfect sense also for transcendental maps (for which rays exist and desirably land). Although for example in the space of exponential maps one can visually and combinatorially identify subcopies of the bifurcation locus,  it is not at all clear how to prove that such copies are homeomorphic, and until now there has been no success in defining an  exponential analogue of polynomial-like maps.
\end{rem}

A map $f\in \FF$ is \emph{combinatorially rigid} if all its cycles are repelling and no other $g\in\FF$ has the same combinatorics.
 
\begin{Conjecture}[Combinatorial Rigidity]\label{Combinatorial Rigidity Conjecture} Let $\FF$ be the exponential family or a family of unicritical polynomials. 
Two functions $f_c,f_{c'}\in \FF$ with all periodic points repelling have the same orbit portrait if and only if $c=c'$.
\end{Conjecture}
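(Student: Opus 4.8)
Since two equal maps obviously have equal orbit portraits, the content is the converse: if $f_c,f_{c'}\in\FF$ have all periodic points repelling and $O_{f_c}=O_{f_{c'}}$, then $c=c'$. Equivalently, one must show that the combinatorial class of a non-hyperbolic $f_c$ with all cycles repelling reduces to $\{f_c\}$; by the inclusions $\text{Comb}(f)\supset\text{Top}(f)\supset\text{Qc}(f)\supset\{f\}$ recalled above it suffices to manufacture a quasiconformal — in fact conformal — conjugacy between any two maps sharing the portrait, and this is the same as asking that the associated fibers (intersections of nested parapuzzle pieces, equivalently nested sequences of maximal Multibrot copies) be singletons, which is why the statement is expected to be equivalent to MLC.

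The plan is the classical puzzle/pullback scheme. First, from $O_{f_c}=O_{f_{c'}}$ together with Theorem~\ref{Douady Hubbard} and Theorem~\ref{Ray portraits are born in wakes} one extracts matching forward-invariant graphs (unions of the relevant cycles of periodic rays with their common landing points, truncated by equipotentials) and a canonical orientation-preserving homeomorphism between them conjugating $f_c$ to $f_{c'}$ on the graphs and respecting the combinatorial labelling of level-$0$ puzzle pieces. Second, one pulls this boundary conjugacy back through the dynamics: since each puzzle piece maps with degree controlled by the single critical point, one gets a tower of uniformly quasiconformal maps $\phi_n$ matching puzzle pieces at every level, with the interpolation on the non-critical pieces forced and a fixed quasiconformal model used on the critical piece. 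Third — the crux — one shows the nested puzzle pieces about the critical point shrink to points; granting this, the $\phi_n$ converge to a \emph{Thurston conjugacy} relative the postcritical set, the Thurston--Sullivan rigidity lemma upgrades it to a genuine quasiconformal conjugacy $\phi$, and since the bifurcation locus has empty interior (Theorem~\ref{Density of J-Stability}) the Measurable Riemann Mapping Theorem forces $\phi$ to have zero dilatation, hence to be affine, hence $c=c'$. The parameter-space statement follows by the parallel argument with parapuzzles.

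The genuine obstacle is precisely the shrinking of the puzzle pieces, i.e.\ obtaining \emph{a priori} (complex) bounds. For non-renormalizable and at most finitely renormalizable parameters this is Yoccoz's modulus estimates (Theorem~\ref{Yoccoz Theorem} for $d=2$, and the AKLS theorem for general $d$). For infinitely renormalizable parameters it is open in general: the complex bounds of Sullivan, Kahn and Lyubich cover only restricted combinatorial classes and actually \emph{fail} for certain satellite combinatorics, where one must instead route through near-parabolic (Inou--Shishikura) renormalization as in Theorem~\ref{ChSh}, or through Levin's quantitative estimates built on the strengthened Pommerenke--Levin--Yoccoz inequality. For the exponential family there are two further difficulties: it is not yet known that every repelling periodic point is the landing point of a periodic ray when the singular value is non-escaping, so the orbit portrait may not even record all the combinatorics, and there is no known analogue of polynomial-like renormalization, so the infinitely-renormalizable reduction is unavailable and one must argue directly with the puzzle, where the unbounded geometry of the escaping set obstructs the modulus estimates. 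Thus a complete proof is not expected without a general a priori bounds theorem; the scheme above is exactly what reduces the conjecture to that single analytic input.
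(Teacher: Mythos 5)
This statement is labeled a \emph{Conjecture} in the paper (Conjecture~\ref{Combinatorial Rigidity Conjecture}); it is equivalent to MLC for quadratics and open in general, and the paper consequently gives no proof of it. Your write-up acknowledges this — you present the state-of-the-art reduction rather than a purported proof — and it matches the paper's own discussion well: you correctly reduce the problem, via puzzles and the Thurston pullback, to the production of \emph{a priori} (complex) bounds; you correctly note that these bounds are known for at most finitely renormalizable parameters (Yoccoz, AKLS), are available for certain infinitely renormalizable combinatorics (Kahn, Kahn--Lyubich), and genuinely fail for some satellite combinatorics where Inou--Shishikura near-parabolic renormalization (Theorem~\ref{ChSh}) or Levin's quantitative estimates take over; and you correctly flag the two additional open issues for exponentials (landing of periodic rays when the singular value is non-escaping, and the absence of an ``exponential-like'' renormalization theory).

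One step in your closing paragraph is off, though. You write that, once the Thurston--Sullivan rigidity lemma yields a quasiconformal conjugacy $\phi$, ``since the bifurcation locus has empty interior (Theorem~\ref{Density of J-Stability}) the Measurable Riemann Mapping Theorem forces $\phi$ to have zero dilatation, hence to be affine, hence $c=c'$.'' Density of $J$-stability does not force the dilatation of a specific conjugacy to vanish, and one cannot in general invoke a zero-measure Julia set, since Buff--Ch\'eritat have produced quadratic Julia sets of positive area. The mechanism the paper actually uses is the \emph{open--closed argument} of Section~\ref{Fiber Section}: from a quasiconformal conjugacy between combinatorially equivalent maps one concludes $\text{Comb}(f)=\text{Qc}(f)$; quasiconformal classes are open or singletons by the Measurable Riemann Mapping Theorem (perturb along the Beltrami path $t\mu$); combinatorial classes coincide with reduced fibers, which are closed for polynomials and which Proposition~\ref{Closed fibers} shows are never open for exponentials; since a nontrivial such class cannot be simultaneously open and closed (and proper) in the connected parameter space, the class is a singleton. ``Zero dilatation'' is the \emph{conclusion} of this argument, not a step one gets for free from the MRMT. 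The rest of your sketch — and your honest assessment that a general a priori bounds theorem is the missing analytic input — is in line with the paper.
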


Another version of Conjecture \ref{Combinatorial Rigidity Conjecture} as originally stated in \cite{Mc94}, \cite{Lyu97} is the following: 
\begin{Conjecture}[Combinatorial Rigidity version 2]\label{Combinatorial Rigidity Conjecture 2} 
Let $\FF$ be the exponential family or a family of unicritical polynomials. Any two combinatorially equivalent functions $f_c,f_{c'}\in \FF$ are quasiconformally conjugate.
\end{Conjecture}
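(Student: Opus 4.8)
The goal is to manufacture, out of the equality of orbit portraits $O_{f_c}=O_{f_{c'}}$ (which is what combinatorial equivalence amounts to here), a quasiconformal conjugacy between $f_c$ and $f_{c'}$; for parameters in the structurally stable set this is essentially the content of the Proposition on quasiconformal classes of hyperbolic parameters together with Remark~\ref{Non-hyperbolic qc classes}, so the heart of the matter is parameters in the bifurcation locus. The plan is a pull-back argument of the type sketched in Section~\ref{Topological rigidity}, seeded by a quasiconformal map on a forward invariant graph. Concretely, I would fix a forward invariant graph $\Gamma_c$ consisting of finitely many cycles of periodic rays together with their landing points (for polynomials one also adds a piece of equipotential, so that the complementary tiles are bounded), and likewise $\Gamma_{c'}$; since the landing pattern of periodic rays is dictated by the orbit portrait, there is a homeomorphism $h_0\colon\Gamma_c\to\Gamma_{c'}$ conjugating $f_c$ to $f_{c'}$ on these graphs, and it may be taken quasiconformal (real-analytic on the rays via Boettcher or linearizing coordinates). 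This $h_0$ is the seed.

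Next I would run the Yoccoz / Branner--Hubbard puzzle in both planes: puzzle pieces of level $n$ are the bounded components of the complement of $f^{-n}(\Gamma)$, nested or disjoint by forward invariance, and $h_0$ lifts level by level to quasiconformal maps $h_n$ between puzzle pieces of the same address, all sharing one dilatation bound. If every nested sequence of puzzle pieces shrinks to a point, the $h_n$ converge to a homeomorphism $h$ with $h\circ f_c=f_{c'}\circ h$, quasiconformal as a uniform limit of uniformly quasiconformal maps; this settles the non-infinitely-renormalizable parameters, as in \cite{Hu} and \cite{AKLS}. For infinitely renormalizable parameters I would instead descend to the polynomial-like renormalizations $\RR^k f_c$, combinatorially matched with $\RR^k f_{c'}$, and invoke \emph{a priori (complex) bounds} --- Kahn--Lyubich bounds for primitive and for broad classes of satellite combinatorics (\cite{Kah06}, \cite{KL08}, \cite{KL09}), Inou--Shishikura and Cheraghi--Shishikura near-parabolic renormalization (\cite{IS}, \cite{CS}) for the remaining satellite classes --- to force the diameters of the associated Mandelbrot copies to zero; fed back into the pull-back this yields a quasiconformal map homotopic rel the postcritical set to a conjugacy, which the Thurston--Sullivan rigidity lemma (\cite{Su88}, \cite[Lemma 4.3]{Che10}) promotes to a genuine quasiconformal conjugacy.

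The main obstacle is precisely the a priori bounds. They genuinely fail for certain infinitely satellite renormalizable parameters with rapidly growing renormalization periods (\cite{So00}), where the small Julia sets do not shrink to points and the modulus estimates between consecutive critical puzzle pieces collapse; for those one must substitute the quantitative constructions of Levin (\cite{Lev11}, \cite{Lev14}), resting on an extension of the multiplier of a periodic orbit beyond its attracting regime and on a sharpening of the Pommerenke--Levin--Yoccoz inequality. Welding all these renormalization regimes into a single uniform argument is the core difficulty, and is why the conjecture remains open. For the exponential family there is a further obstruction already at the seed step: when the postsingular set is unbounded it is not known that every repelling periodic point is the landing point of a periodic ray, so $\Gamma_c$ may fail to carry enough combinatorial information; moreover there is no known analogue of polynomial-like maps, so the renormalization reduction has no exponential counterpart, and one must work directly with rays and parapuzzles in the spirit of \cite{Be15}, where the geometric modulus estimates are essentially unavailable.
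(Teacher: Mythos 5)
The statement you were asked to prove is a \emph{conjecture}, not a theorem: the paper does not prove it, nor does anyone else --- it is equivalent to MLC and remains open. You acknowledge this explicitly at the end of your proposal (``is why the conjecture remains open''), so there is no error of overreach; but strictly speaking there is nothing in the paper to compare a proof against, and what you have written is not a proof but a survey of the known partial strategies. As such it is a faithful and accurate synthesis of the material in Sections~\ref{Topological rigidity}, \ref{Renormalization} and \ref{Fiber Section}: Thurston--Sullivan pull-back seeded on a forward-invariant graph of periodic rays, Yoccoz puzzle shrinking for at most finitely renormalizable parameters \cite{Hu}, \cite{AKLS}, polynomial-like renormalization with Kahn--Lyubich a priori bounds \cite{Kah06}, \cite{KL08}, \cite{KL09}, Inou--Shishikura and Cheraghi--Shishikura near-parabolic renormalization for certain satellite combinatorics \cite{IS}, \cite{CS}, Levin's quantitative substitutes when complex bounds fail \cite{Lev11}, \cite{Lev14}, and the specific obstructions in the exponential family (possible non-landing of periodic rays when the postsingular set is unbounded, absence of any exponential-like map theory). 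These are exactly the ingredients the paper itself catalogues, and you flag the correct open gap: no one knows how to weld the different renormalization regimes into a single argument covering all combinatorial classes.

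Two small points of hygiene. First, wrapping this discussion in a \texttt{proof} environment is misleading; a ``proof strategy'' of an open conjecture should not carry that label, since readers expect a closed argument. Second, you write that Cheraghi--Shishikura handles ``the remaining satellite classes,'' which overstates \cite{CS}: their Theorem~D applies to sequences $\{p_i/q_i\}$ satisfying specific arithmetic conditions, and there are satellite combinatorics (e.g.\ some of the Sørensen-type examples \cite{So00}) covered neither by \cite{CS} nor by \cite{Lev11}, \cite{Lev14}. That residual set is precisely where the conjecture is genuinely stuck, so the phrasing should not suggest otherwise.
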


 \begin{rem}Quasiconformal conjugacy does not imply conformal conjugacy in itself. 
 However, maps  which are  quasiconformally (in fact, even  topologically) conjugate must have the same combinatorics, 
 so combinatorial classes always contain quasiconformal classes. At this point, if one can show that   combinatorially equivalent maps are always  quasiconformally conjugate, one gets that combinatorial classes and quasiconformal classes coincide. Since the latter are either open or singletons, one can use an open-closed argument  to show that they are singletons (see later in this section).
\end{rem}
 
The assumptions of all periodic points repelling is necessary because two points on the boundary of the same hyperbolic component have the same orbit portrait. 
 To have    Conjecture~\ref{Combinatorial Rigidity Conjecture}  stated in terms of maps with all cycles repelling is sufficient to obtain that it   implies density of hyperbolicity because parameters in non-hyperbolic components have all periodic cycles repelling.

For unicritical polynomials the Pommerenke-Levin-Yoccoz inequality implies that Multibrot sets are locally connected at points on the boundaries of hyperbolic components, and  it is known that points with attracting and indifferent cycles always belong to the closure of a hyperbolic component.  There is no known valid version of the Pommerenke-Levin-Yoccoz inequality for exponentials which would lead to a similar result.

 Conjecture~\ref{Combinatorial Rigidity Conjecture} is false in the space of (non-unicritical) cubic polynomials. Indeed, Henriksen \cite{Hen} has given an example of two cubic polynomials which have the same orbit portrait but are not quasiconformally conjugate on their Julia sets.

\subsection{Fibers for unicritical polynomials and exponentials}

Let $X$ be a Multibrot set or the Julia set of a unicritical polynomial.
The idea behind showing local connectivity of $X$  used in Section~\ref{Renormalization} for Multibrot sets is to find a nested  sequence of sets around a point $x$ (the puzzle pieces), whose  intersection with $X$ is connected,   and whose diameter shrinks to zero. The boundaries of such puzzle pieces consist of periodic and preperiodic ray pairs and pieces of equipotentials. 
 It is a natural idea to look at this from a different perspective, and define the (dynamical or parameter) fiber of $x$ as the intersection over all possible puzzle pieces containing $x$ whose boundary is made of preperiodic ray pairs. 
Fibers have been introduced in \cite{Sc04}, where one can find a good introduction to this topic. Fibers have the nice properties that, for a point $c\in M_d$, triviality of the fiber of  $c$ implies local connectivity of $M_d$ at $c$, since it implies the existence of a basis of connected neighborhoods around $c$. Here we follow the more recent exposition from \cite{RS08}, which applies better to the exponential family.
 
A \emph{separation line} is a Jordan arc in parameter space, tending to infinity in both directions,   which either contains only hyperbolic and finitely many parabolic parameter, or  consists of exactly two  periodic or preperiodic parameter rays together with their common landing point. 

Two points are separated by a separation line $\gamma$ if and only if they belong to two different connected components of $\C\setminus\gamma$.
 
 The \emph{extended fiber} of a parameter $c$ is  s the set of   parameters which cannot be separated from $c$ via a separation line. 
The \emph{(parameter) fiber} (also called \emph{reduced fiber}) of a non-escaping parameter $c$ is the set of non-escaping parameters which cannot be separated from $c$ via a separation line. The fiber of $c$ is trivial if and only if it only contains the parameter $c$. Extended fibers are closed and connected.
  
\begin{Conjecture}[Triviality of fibers]\label{Triviality of fibers conjecture} Consider the family of unicritical polynomials of degree $d\geq2$ or the exponential family. 
Then all  fibers  are trivial. 
\end{Conjecture}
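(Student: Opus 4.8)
The statement is equivalent to the Combinatorial Rigidity Conjecture and, for $d=2$, to MLC, so the plan is to prove combinatorial rigidity: two non-escaping parameters with all cycles repelling and the same orbit portrait must coincide. Parameters carrying an indifferent or attracting cycle lie on the closure of a hyperbolic component — in the unicritical case this uses the Pommerenke--Levin--Yoccoz inequality — and cause no trouble, so this reduction is legitimate. The global shape of the argument is an open--closed scheme. First I would build a quasiconformal conjugacy between combinatorially equivalent maps, so that combinatorial classes coincide with quasiconformal classes; by the Measurable Riemann Mapping Theorem together with density of $J$-stability the latter are either open or singletons, and an open--closed argument in parameter space (as in \cite{Che10}) then forces them to be singletons, which is exactly triviality of the corresponding fibers. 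Equivalently, one produces for each $c$ a nested sequence of parapuzzle pieces whose diameters shrink to $c$.

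I would construct the conjugacy case by case according to the renormalization type of $c$. For non-renormalizable and at most finitely renormalizable $c$ with all cycles repelling, use the Yoccoz puzzle built from the cycle of rays landing at the $\alpha$-fixed point: estimate the moduli of the annuli between consecutive critical puzzle pieces, show that the series of moduli diverges, and transfer the resulting shrinking of pieces to the parameter plane. This is Yoccoz's Theorem, with its degree-$d$ analog due to AKLS, and it settles those parameters outright. For infinitely renormalizable $c$ one runs the renormalization operator, obtains the canonical nested sequence of small Mandelbrot copies determined by the combinatorics, and proves that their diameters tend to zero: given a priori (complex) bounds the renormalizations are precompact, a Sullivan--Thurston pullback argument upgrades combinatorial equivalence to a quasiconformal conjugacy, and a McMullen-type rigidity theorem rules out invariant line fields on the small Julia sets. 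Primitive and bounded-type satellite combinatorics are covered by the complex bounds of Kahn and Lyubich; the unbounded satellite case by Cheraghi--Shishikura near-parabolic renormalization, which yields exponential decay of the diameters of the satellite copies; and the residual satellite classes with no a priori bounds by Levin's quantitative parabolic-bifurcation construction, replacing complex bounds by his strengthened Pommerenke--Levin--Yoccoz inequality and his multiplier-extension result.

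The hard part — the reason this remains a conjecture — is the infinitely renormalizable case in full generality. No single theorem produces a priori bounds for all combinatorial types simultaneously; the available results each cover only a band of combinatorics, with no unified argument stitching them together, and there are genuinely bad parameters (satellite combinatorics whose small Julia sets do not shrink to points) for which no form of complex bounds can hold. For the exponential family the obstructions compound: there is no polynomial-like renormalization theory and no proposed notion of an exponential-like map, there is no Pommerenke--Levin--Yoccoz inequality to force indifferent parameters onto hyperbolic boundaries, and it is not even known that every repelling periodic point is the landing point of a periodic ray when the singular value is non-escaping, so the very first reduction step is incomplete and ghost limbs cannot yet be excluded. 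By comparison the puzzle, pullback and open--closed machinery is well understood; what is missing is a universal a priori bounds theorem and a genuine renormalization theory for transcendental maps.
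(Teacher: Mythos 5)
This statement is an open conjecture, not a theorem, and the paper accordingly offers no proof of it; it only states the conjecture, explains its equivalence to the Combinatorial Rigidity Conjecture (and, for $d=2$, to MLC), and surveys partial results and obstructions. Your ``proposal'' correctly recognizes this: you do not claim a proof, but instead lay out the standard strategy (reduce to all-cycles-repelling parameters, build quasiconformal conjugacies between combinatorially equivalent maps, run the open--closed argument to force combinatorial classes to be singletons, handle finitely renormalizable parameters by Yoccoz/AKLS puzzle estimates and infinitely renormalizable ones by renormalization with a priori bounds or by Cheraghi--Shishikura and Levin for the bad satellite combinatorics) and then honestly identify exactly where the argument breaks down: no universal a priori bounds theorem covering all infinitely renormalizable combinatorics, and for the exponential family the absence of a polynomial-like renormalization theory, of a Pommerenke--Levin--Yoccoz type inequality, and even of a general landing theorem for periodic rays at repelling points when the singular value is non-escaping. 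This is a faithful and well-organized account of the state of the art as the survey presents it; the only thing worth flagging is that, since the task was to ``prove'' the statement, you should make explicit up front (as you do implicitly) that no proof exists and that what follows is a program plus a precise description of the known gaps, rather than risk the framing being read as a proof attempt with unacknowledged holes.
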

 
There are many different definitions of separation lines which give rise to very similar theories of fibers. For example, one can consider only preperiodic parameter rays  (although in this case, fibers of  parameters on the closure of hyperbolic components are not trivial, so the conjecture would need to be stated in a slightly different way) or just separating lines which contain only hyperbolic and parabolic parameters.  These  definitions eventually yield the same results provided that fibers of parameters on the boundaries of hyperbolic components are trivial.  For more on the definition and properties of fibers see \cite{RS08}.

Since separation lines cannot cross non-hyperbolic components, a non-hyperbolic component would be contained in a single fiber, hence Conjecture~\ref{Triviality of fibers conjecture} implies Conjecture~\ref{Density of hyperbolicity conjecture}. 

On the opposite side, to separate any two parameters $c,c'$ in the boundary of the same hyperbolic component $U$ is fairly easy. Observe that $\partial U$ is an ordered set (For quadratic polynomials, consider the parametrization of $U$ given by the multiplier of the attracting cycle, which extends to the boundary of $U$ inducing on $\partial U$ the order from the unit circle; for the exponential family and unicritical polynomials with $d>2$ the multiplier map is a covering, but still induces a cyclic or vertical order). Then take a parabolic point $p_1$ such that  $c<p_1<c'$ and one parabolic point such that $c'<p_1<c$. Any two periodic parameter rays landing at $p_1,p_2$ respectively, together with a curve in $U$, separate $c$ from $c'$.

 The following fact is the reason why triviality of fibers is also referred to as combinatorial rigidity. It holds for unicritical polynomials and exponentials.
 
 {
\begin{thm} Let $c'$ be a non-escaping parameter with  all periodic cycles repelling. Then $c'$ belongs to the fiber $F(c)$ of another parameter $c$ if and only if $c$ and $c'$ have the same combinatorics. In particular     Conjecture~\ref{Combinatorial Rigidity Conjecture} (Combinatorial Rigidity Conjecture) is equivalent to Conjecture~\ref{Triviality of fibers conjecture} (Triviality of fibers conjecture).
\end{thm}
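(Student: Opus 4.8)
The plan is to route everything through the correspondence, furnished by Theorem~\ref{Ray portraits are born in wakes} together with its preperiodic (Misiurewicz) analogue, between orbit portraits and parameter wakes, and then to play the two kinds of separation line in the definition against the two ways that correspondence can tell two parameters apart. Throughout, fix non-escaping $f_c,f_{c'}$ with all periodic cycles repelling. The first step is the dictionary itself: every entry of the orbit portrait $O_{f_c}$ consisting of at least two rays carries a characteristic address pair $\sigma_\pm$, and by Theorem~\ref{Ray portraits are born in wakes} (for periodic entries) and its Misiurewicz analogue (for preperiodic entries) the dynamic rays $G_{\sigma_+},G_{\sigma_-}$ land together in the dynamical plane of $f_a$ precisely when $a$ lies in the associated parabolic, respectively Misiurewicz, wake $W$; conversely every such wake arises from a portrait. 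Hence $O_{f_c}=O_{f_{c'}}$ if and only if $c$ and $c'$ belong to exactly the same parabolic and Misiurewicz wakes.

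Next I would match those wakes with separation lines of the second kind. A separation line of the second kind is a pair of (pre)periodic parameter rays together with their common landing point; by the landing theorem for parameter rays this point is parabolic or Misiurewicz, and by Theorem~\ref{Ray portraits are born in wakes} the two rays bound a wake, so the separation line is precisely the boundary of such a wake, and conversely. Consequently $c$ and $c'$ are separated by a separation line of the second kind if and only if some wake contains exactly one of them, i.e. if and only if $O_{f_c}\neq O_{f_{c'}}$. In particular, if $c'\in F(c)$ then $O_{f_c}=O_{f_{c'}}$: this is one implication.

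For the converse I must rule out that two parameters with the same orbit portrait are separated by a separation line of the first kind, that is, by an arc $\gamma$ made of hyperbolic parameters and finitely many parabolic ones. Such a $\gamma$ meets the bifurcation locus only at its parabolic points, and its arcs running through the escaping region (the complement of $M_d$, or the hyperbolic components in the exponential case) cannot by themselves separate two non-escaping parameters; the idea is therefore to reroute $\gamma$ along the finitely many parameter rays that land at the parabolic points it visits, producing a separation line of the second kind that still separates $c$ from $c'$ -- which by the previous paragraph would force $O_{f_c}\neq O_{f_{c'}}$, a contradiction. \textbf{This rerouting is the crux and the main obstacle:} it is exactly the assertion that the fiber theories attached to the two notions of separation line agree, which in turn reduces to triviality of the fibers of parameters lying on boundaries of hyperbolic components. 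For unicritical polynomials this last fact follows from the Pommerenke--Levin--Yoccoz inequality; for the exponential family, where no such inequality is known, one would instead separate two points of $\partial U$ directly, using the cyclic order on $\partial U$ and the parameter rays landing at its parabolic points, exactly as in the elementary argument recalled just before the statement. Granting this, the two implications combine to give: $c'\in F(c)$ if and only if $O_{f_c}=O_{f_{c'}}$.

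Finally, the displayed equivalence gives $F(c)=\{c' : O_{f_{c'}}=O_{f_c}\}$ for every non-escaping $c$ with all cycles repelling, so triviality of $F(c)$ for such $c$ is precisely Combinatorial Rigidity at $c$ (Conjecture~\ref{Combinatorial Rigidity Conjecture}); and since the fibers of the remaining parameters -- those with a non-repelling cycle -- are trivial by the elementary separation argument for hyperbolic components, Triviality of Fibers (Conjecture~\ref{Triviality of fibers conjecture}) and Combinatorial Rigidity are equivalent.
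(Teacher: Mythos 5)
Your argument follows the paper's own sketch closely: both proofs rest on the same portrait--wake dictionary furnished by Theorem~\ref{Ray portraits are born in wakes}, identify separation lines of the second kind with wake boundaries, and then read off both the characterization of $F(c)$ and the equivalence of the two conjectures. Where you are more explicit than the paper's terse sketch is in flagging the need to rule out separation by first-kind lines; your reduction of that point to triviality of fibers on boundaries of hyperbolic components matches the paper's earlier remark that the various notions of separation line yield the same fiber theory precisely under that hypothesis, so you have merely made visible a step the paper leaves implicit.
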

\begin{proof}[Sketch of the proof]
Since new periodic ray  portraits are born and die when crossing parabolic  wakes in parameter space (see Theorem~\ref{Ray portraits are born in wakes}),  two {non-escaping} parameters $c,c'$ have the same orbit portrait if and only if every  wake containing $c$ contains $c'$ and viceversa, or if they belong to the closure of the same stable component (hyperbolic, or possibly non-hyperbolic). 
If $c'\in F(c)$, $c$ and $c'$ are not separated by any periodic  parameter ray pair, so they have the same combinatorics. On the other side if $c,c'$ have the same combinatorics they cannot be separated by any periodic or preperiodic ray pair, so if all periodic cycles are repelling, they belong to the same fiber. 

Triviality of fibers implies that the intersection of any sequence of nested wakes  contains at most one non-escaping parameters, so no  two parameters with all cycles repelling can have the same orbit portrait. On the other side, two parameters which have different portraits can be separated by two periodic rays: take an n-tuple $O_P$ which belongs to the portrait of, say, $c$ but not $c'$, consider the angles/addresses $\s_1,\s_2$ of the characteristic rays for $O_P$ (the rays which bound the sector containing the singular value), and the parabolic wake $W$ formed  by the parameter rays of angle $s_1,s_2$. Then $c\in W$ and $c'\notin W$.
\end{proof}

The previous theorem in fact shows that for maps with all periodic cycles repelling, fibers and combinatorial classes coincide. To see whether combinatorial classes and quasiconformal classes coincide one would need to show that combinatorial equivalence implies quasiconformal conjugacy. }

\begin{rem}Density of hyperbolicity is much weaker than triviality of fibers. Indeed, fibers may be non-trivial but contain for example only parameters in the bifurcation locus. If all fibers have empty interior, clearly there are no non-hyperbolic components.
\end{rem}

 For quadratic polynomials,  MLC is equivalent to triviality of fibers (see \cite{DH84}, \cite{Lyu99}, Lecture 4 and  \cite{RS08}, Theorem 10).
\begin{thm}For quadratic polynomials, the MLC Conjecture is equivalent to Conjecture~\ref{Combinatorial Rigidity Conjecture} (Combinatorial Rigidity Conjecture) and hence  to Conjecture~\ref{Triviality of fibers conjecture} (Triviality of fibers Conjecture). 
\end{thm}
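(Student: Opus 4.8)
*For quadratic polynomials, the MLC Conjecture is equivalent to Combinatorial Rigidity and hence to Triviality of Fibers.*

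The plan is to prove the chain MLC $\Leftrightarrow$ Triviality of Fibers, using the already-established equivalence between Triviality of Fibers and Combinatorial Rigidity (the theorem immediately preceding this one handles the latter, since quadratic polynomials are just unicritical polynomials of degree $2$). Thus the real content is the two implications between local connectivity of $M$ and triviality of all parameter fibers. First I would recall the general topological fact, essentially due to Douady and elaborated in \cite{RS08} and \cite{Sc04}: for a compact connected full subset $X\subset\C$, local connectivity at a point $x$ is equivalent to triviality of the \emph{fiber} of $x$, where a fiber is the intersection of all ``puzzle-like'' neighborhoods cut out by the admissible separation lines. One direction is soft: if $M$ is locally connected at $c$, then by definition $c$ has a basis of connected neighborhoods in $M$, and since the boundary of $M$ is accessible via parameter rays which land (Carath\'eodory--Torhorst, already stated in the excerpt), these neighborhoods can be taken to be bounded by finitely many (pre)periodic parameter ray pairs together with arcs of equipotentials; shrinking such a basis forces the fiber of $c$ to reduce to $\{c\}$.

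For the converse — triviality of all fibers implies MLC — the key steps are, in order: (1) show that the extended fiber of every parameter $c$ is closed and connected (stated in the excerpt as a property of fibers, following \cite{RS08}); (2) observe that separation lines built from (pre)periodic parameter ray pairs are available in abundance, because by the landing theorems (Theorem~\ref{Douady Hubbard} and its parameter analogue, plus Theorem~\ref{Ray portraits are born in wakes}) every parabolic and Misiurewicz parameter is the landing point of finitely many such rays, and these are dense enough in $\partial M$ to separate distinct points whenever the fibers are trivial; (3) deduce that if all fibers are singletons, then the collection of (closures of) complementary pieces cut by finite unions of separation lines forms, at each $c\in M$, a neighborhood basis of $c$ consisting of connected sets — hence $M$ is locally connected at $c$; (4) handle the boundary-of-hyperbolic-component parameters separately (where the naive preperiodic-ray fibers are not trivial) by using the explicit separation argument sketched in the excerpt: given $c\neq c'$ on $\partial U$, pick parabolic points $p_1,p_2$ on $\partial U$ with $c<p_1<c'<p_2<c$ in the boundary order, and separate $c$ from $c'$ by periodic ray pairs landing at $p_1,p_2$ closed up through an arc in $U$; since $U$ itself is locally connected, this yields local connectivity of $M$ at such points too.

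I would then assemble the equivalences: MLC $\Leftrightarrow$ Triviality of Fibers (steps above), and Triviality of Fibers $\Leftrightarrow$ Combinatorial Rigidity (the preceding theorem, applied with $d=2$), giving the three-way equivalence; the statement about Combinatorial Rigidity version~2 follows since, for quadratic polynomials, combinatorial equivalence in the sense of orbit portraits and in the sense of nested Mandelbrot copies coincide (Remark following Conjecture~\ref{Combinatorial Rigidity Conjecture}), and quasiconformal classes are singletons on the bifurcation locus by the Measurable Riemann Mapping Theorem. The main obstacle I expect is step (2)–(3): verifying that triviality of fibers genuinely produces a \emph{connected} neighborhood basis at every point of $M$, not merely separation of pairs of points. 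This requires the nontrivial input that the extended fiber is connected and that complementary components of finite unions of separation lines intersect $M$ in connected sets — facts which ultimately rest on the compactness and full-ness of $M$ together with the structure theory of wakes, and which are where the careful work in \cite{RS08} and \cite{Sc04} is concentrated. The hyperbolic-component boundary case (step 4) is a secondary technical point but is genuinely necessary, since otherwise the conjecture as stated with preperiodic-ray fibers would be false there.
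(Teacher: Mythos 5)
Your overall decomposition matches the paper's: prove MLC $\Leftrightarrow$ Triviality of Fibers and then quote the preceding theorem for the fiber/combinatorics equivalence. Your treatment of the direction Triviality $\Rightarrow$ MLC (nested wake cells cut by equipotentials form a connected neighborhood basis, with special care at hyperbolic-component boundaries) is consistent with the paper's sketch, which uses the same nested-wake observation. The difference lies in the direction MLC $\Rightarrow$ Triviality, and there I think your argument as written has a gap that the paper's argument avoids.

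You claim that once $M$ is locally connected at $c$, the connected neighborhood basis guaranteed by local connectivity ``can be taken to be bounded by finitely many (pre)periodic parameter ray pairs together with arcs of equipotentials,'' and that shrinking such a basis forces $F(c)=\{c\}$. But the ability to choose a neighborhood basis made of separation-line cells is essentially \emph{equivalent} to triviality of $F(c)$ --- it is not a consequence of local connectivity alone, and asserting it here is close to assuming what you want to prove. The paper does not argue this way. Instead it uses a cardinality argument: under MLC, Carath\'eodory--Torhorst gives that every parameter ray accumulating on a point of $F(c)$ actually lands there, so every boundary point of $F(c)$ on $\partial M$ is a ray-landing point; on the other hand $F(c)$, being full, compact and connected, is either a singleton or uncountable, while a fiber can contain the accumulation set of at most finitely many parameter rays (a structural fact from \cite{RS08}). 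These two facts are incompatible unless $F(c)=\{c\}$. You should replace your ``the basis can be taken to be ray-bounded'' step with this counting argument, or supply an independent justification for it; as written, that step is where the proof would break down under scrutiny. The rest of the plan --- the citation of the preceding theorem for Combinatorial Rigidity $\Leftrightarrow$ Triviality of Fibers, and the boundary-of-hyperbolic-component case in the converse direction --- is sound and matches the paper's intent.
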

\begin{proof}[Sketch of the proof]
Assume triviality of fibers and consider $c\in M$ and a sequence of nested wakes $W_n\ni c$. Since the fiber of $c$ is trivial $\cap W_n\cap M=\{c\}$, hence the wakes $W_n$ cut by appropriate equipotentials form a basis of connected neighborhoods for $c$.
Now let $F(c)$ denote the fiber of a parameter $c$. If $M$ is locally connected by Carath\'eodory-Torhost's Theorem every parameter ray accumulating on some  $c'\in F(c)$ actually lands at $c'$. Since fibers are either trivial or uncountable (as they are full sets) and can contain the accumulation set of at most finitely many parameter rays (see e.g. \cite{RS08} for details)  $F(c) $ is finite hence equals $\{c\}$.
\end{proof}

\begin{rem} The fiber $F(c) $ of a parameter $c$  is trivial  if and only if all parameters in $F(c)$ are landing points of a parameter ray (for both polynomials and exponentials). Non-trivial fibers are uncountable,  but contain the accumulation set of at most finitely many rays, so not all parameters in a  non-trivial fiber  can be landing points of rays. A trivial fiber $F(c)=\{c\}$ contains the accumulation set of at least one parameter ray, which is forced to land at $c$.
\end{rem}

\begin{rem}  In \cite{RS08}  the \emph{extended fiber} of a parameter $c$ is the set of parameters which cannot be separated from $c$ via a separating line, while the term \emph{(reduced) fiber} is used for the extended fiber minus the set of dynamic rays (without their endpoints, which may be escaping, but are considered to belong to the reduced fiber). We avoided this issue by only defining fibers for non-escaping parameters (as for the Mandelbrot set) and taking the intersection with the set of non-escaping points.  Unlike the polynomial case, the set of non-escaping parameters is neither open nor close, so the extended fibers are closed but the  reduced fibers are neither  open nor closed. 
\end{rem}

Thinking in terms of fibers   allows to treat classes of parameters which can be defined combinatorially, and to use  a wide range of combinatorial tools. The rigidity conjecture generalizes naturally to the other one-parameter families like unicritical polynomials and the exponential family.
Triviality of fibers has been recently proven for   exponential maps which are either postsingularly finite or  combinatorially non-recurrent (\cite{Be11},\cite{Be15}).
Triviality of fibers for parabolic parameters is work in progress of the author with L. Rempe-Gillen.

\subsubsection*{Open-closed argument} {Here is a  famous open-closed argument modified so that it works also in the exponential case (for polynomials see e.g. \cite{Lyu99}, Lemma 4.6). 

Using  the measurable Riemann Mapping Theorem, one can see that quasiconformal classes are either open or singletons.  Combinatorial classes coincide with fibers and   are  closed by definition for polynomials, so whenever one can   show that combinatorial classes and quasiconformal classes coincide, one  obtains that the latter are singletons and hence combinatorial rigidity. 

The set of non-escaping parameters for the exponential family is not compact, so an open-closed argument for the exponential familily is slightly more involved, in that one needs to show that reduced fibers, i.e. combinatorial classes, are never open. This may seem like an intuitively clear fact, but one should remember that reduced  fibers are the intersection of a closed set (the extended fiber) with  the set of non-escaping parameter, which is neither open nor closed. So a priori an extended fiber could be for example   the closure of a non-hyperbolic component whose boundary points are all escaping (so the reduced fiber would be just the non-hyperbolic component itself, which is open), or contain infinitely many (closures of) non-hyperbolic components whose union forms an open set.   To our understanding it does not seem to follow directly from the arguments in \cite{RS08}, so we include a proof here. 

\begin{prop}\label{Closed fibers} For the exponential family, (reduced) fibers cannot be open.
\end{prop}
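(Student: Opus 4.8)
The plan is to argue by contradiction: suppose some reduced fiber $F(c)$ is open in $\C$. Since ``cannot be separated by a separation line'' is an equivalence relation on parameters, $F(c)=F(c')$ for every $c'\in F(c)$, so the base point may be moved freely inside $F(c)$. First I would locate the $J$-stable parameters of $F(c)$. By Theorem~\ref{Density of J-Stability} the $J$-stable parameters are dense, and since the bifurcation locus has empty interior, $F(c)\cap(J\text{-stable set})$ is open and dense in $F(c)$, hence non-empty. No $J$-stable parameter of $F(c)$ can lie in a hyperbolic component $W$: otherwise $F(c)\cap W$ would be an open subset of $W$ containing two distinct parameters, which can be separated exactly as two boundary points of $W$ are separated in the discussion earlier in this section (two periodic parameter rays landing at suitable parabolic points of $\partial W$, joined by an arc inside $W$), contradicting that $F(c)$ is a single fiber. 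So every $J$-stable parameter of $F(c)$ lies in a non-hyperbolic (queer) component $W$. Recalling that a separation line consists either of hyperbolic and finitely many parabolic parameters, or of two parameter rays together with a parabolic or Misiurewicz landing point, in either case it misses the $J$-stable open set $W$, and therefore cannot separate two points of $W$; hence $W$ lies in a single fiber, necessarily $F(c)$. Therefore $F(c)\cap(J\text{-stable set})$ is a union $\bigcup_i W_i$ of queer components, each entirely contained in $F(c)$ and dense in $F(c)$. In particular $F(c)$ misses the period-one hyperbolic component, so $F(c)\neq\C$ and $\partial F(c)\neq\emptyset$.

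Next I would show $\partial F(c)$ consists only of escaping parameters. If $c_0\in\partial F(c)$ were non-escaping then, $F(c)$ being open, $c_0\notin F(c)$, so $c_0$ is separated from $c$ by some separation line $\gamma$. As $\gamma$ is closed and $c_0\notin\gamma$, a small connected neighbourhood $N$ of $c_0$ lies on the same side of $\gamma$ as $c_0$, that is, on the opposite side from $c$; but $c_0\in\overline{F(c)}$, so $N$ meets $F(c)$, producing parameters of $F(c)$ separated from $c$ by $\gamma$ --- a contradiction.

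To conclude, let $E$ denote the set of escaping parameters: it lies in the bifurcation locus, hence has empty interior, and it does not disconnect the plane, i.e.\ $\C\setminus E$ is connected (the escaping parameters form a union of parameter rays tending to infinity without bounding, as in the description of the exponential parameter plane underlying \cite{RS08}; see also \cite{Re06}, \cite{Sc99}). From $\partial F(c)\subset E$ one obtains the partition $\C\setminus E=F(c)\ \sqcup\ \bigl((\C\setminus\overline{F(c)})\setminus E\bigr)$ into two sets that are relatively open in $\C\setminus E$; since $F(c)\neq\emptyset$ and $\C\setminus E$ is connected, the second set is empty, so $\C\setminus\overline{F(c)}\subset E$. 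But $\C\setminus\overline{F(c)}$ is a non-empty open set while $E$ has empty interior --- contradiction. Hence no reduced fiber is open.

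The hard part is this last step: one must know that the escaping parameters of the exponential family do not separate $\C$. To avoid relying on that global statement, one can instead use that escaping parameters are dense in the bifurcation locus of the exponential family; then no open subset of the non-escaping parameters can meet the bifurcation locus, so the open set $F(c)$ is just $\bigcup_i W_i$ with no bifurcation parameter at all, and one finishes by running the connectedness argument on the extended fiber $\widehat F(c)$ (closed and connected by hypothesis): either $\widehat F(c)=\bigcup_i W_i$ is open and closed in $\C$, which is impossible, or some escaping parameter of $\widehat F(c)$ is accumulated by the $W_i$, a configuration that must then be excluded. Either way the delicate point is to understand how the escaping parameters sit relative to the stable components, and this is precisely where one uses that the family is the exponential family rather than a general class-$\BB$ family.
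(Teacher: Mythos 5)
Your structural setup is a genuinely different route, and the first two-thirds of it are sound: you correctly deduce that an open reduced fiber $F(c)$ would miss the hyperbolic locus, would swallow any queer component it meets, and would have $\partial F(c)$ contained in the set $E$ of escaping parameters. The problem is the conclusion. Your first finish requires the non-escaping exponential parameters $\C\setminus E$ to form a connected set; this is a strong global statement about the exponential parameter plane which is not proved in the references you cite and does not follow from the ray landing results, so you cannot invoke it as known. Your second finish reduces the question to excluding the possibility that an escaping parameter of the extended fiber $\widehat F(c)$ is accumulated by the queer components $W_i$, and then states that this configuration ``must then be excluded''; but excluding it is exactly what the proposition asserts, so as written this is a restatement rather than a proof.

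The paper avoids both pitfalls by arguing locally at a single well-chosen boundary point. Once a queer component $W$ is located inside the fiber, it invokes \cite[Lemma 6.2]{Be15} to produce a non-escaping parameter $c'\in\partial W$; such a $c'$ lies in the reduced fiber (separation lines cannot cross $W$, so cannot separate $c'$ from $c$) and also in the bifurcation locus, where Misiurewicz parameters are dense. A Misiurewicz parameter is the landing point of a parameter ray with preperiodic address, hence is approximated by escaping parameters and cannot lie in the interior of any reduced fiber, so neither can $c'$; the fiber is therefore not open. No global topology of $E$ is needed. You can repair your argument by replacing everything after the $\bigcup_i W_i$ step with this local argument at a non-escaping boundary point of one of your $W_i$.
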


\begin{proof}
Either a reduced fiber has empty interior (in which case it is clearly not open) or  it contains a  hyperbolic component (which is impossible since the fibers of hyperbolic parameters are singletons) or it contains a non-hyperbolic component. In the latter case, the fiber contains also all the points on the  boundary of the non-hyperbolic component, since separation lines cannot cross the latter. One can show that non-hyperbolic components always have (a lot of) non-escaping parameters in the boundary \cite[Lemma 6.2]{Be15}. Such parameters are in the reduced fiber by definition, so it is  enough to show that any such a parameter $c$ is in the boundary of
 the reduced fiber and not in its interior. Indeed, $c$ is approximated by  Misiurewicz parameters because the latter are dense in the bifurcation locus, and Misiurewicz parameters are always the landing points of parameter rays with preperiodic address, hence cannot belong to the interior of any fiber; hence $c$ itself must be on the boundary of the reduced fiber and the reduced  fiber is not open. 
\end{proof}  

Like in the polynomial case, if one can   show that combinatorial classes and quasiconformal classes coincide, one  obtains that the latter are singletons and hence combinatorial rigidity. We observe that this argument does not exclude the possibility that a reduced fiber may contain for example  two non-hyperbolic components touching at a common point, or infinitely many non-hyperbolic components attached like a string of beads or clustered in some kind of bifurcation tree.

\subsection{Some thoughts about fibers for families in class $S$}

There are several issues in stating a rigidity conjecture 
for more general classes of transcendental functions. A basic one is that  it has not been proven yet that periodic and preperiodic rays land (except when they intersect a singular orbit) so that this issue may cause some indeterminacy in the definition of combinatorial classes (for example, it may happen that two functions have the same ray portrait because some of their rays do not land at all, but that the 'natural way' in which those rays would have been supposed to land is very different).  {This is related to the fact that there is no obvious generalization of wakes. Indeed, given a cycle of periodic rays, it moves holomorphically outside the set of point in $M_\FF$ for which a singular value hits a ray in the cycle, an image thereof, or one of the landing points. However, it is not at all clear that the set where the cycle moves holomorphically is open: for exponentials and polynomials, wakes exist because parameter rays land.}

Another issue is that it is not clear whether parameters in non-hyperbolic component only have repelling cycles, so one would need to be very careful about stating the conjecture in terms of all cycles being repelling.  

In higher dimensional parameter spaces the notion of separating lines does not make sense. One could still define the fiber of a map $f$ as the connected component containing $f$ of the set of all maps which have the same combinatorics as $f$, but it would be much harder to show properties of fibers without using the fact that they are obtained as nested intersections  of closed (not necessarily compact) sets.

Finally,   the result by Henriksen about the failure of combinatorial rigidity for cubic polynomials make it doubtful whether it is worth pursuing this strategy for families whose parameter spaces are not unidimensional. Henriken  construction of two non-conjugate cubic polynomials  which are combinatorially equivalent does not go through for transcendental maps for several reasons, however, it seems to rely deeply  on the fact that the parameter space has complex dimension strictly bigger than one, hence seems plausible that similar phenomena could occur for transcendental entire maps.

\subsection*{A table of the different conjectures orbiting around the Density of Hyperbolicity Conjecture}
We conclude with  a table of the different conjectures  for quadratic polynomials and the relations among them. 
\vspace{20pt}
\begin{center}

\begin{tabular}{ccccc}
Triviality of Fibers & $\Longleftrightarrow$ & MLC & $\Longleftrightarrow$ & Combinatorial Rigidity\\
& & &   & \\ 
 & &$\Big\Downarrow$ &   & \\
 & & &   & \\
 No Invariant Line Fields & $\Longleftrightarrow$ & Density of Hyperbolicity & $\Longleftarrow$ & Topological Rigidity\\
 & & &   & \\
& & $\Big\Downarrow$ &   & \\
& & &   & \\
& & QC Rigidity &   & 
\end{tabular}

\end{center}

\begin{footnotesize}

\section{Bibliography}

\end{footnotesize}

\end{document}